\documentclass[11pt]{article}
\usepackage{srcltx}
\usepackage{eurosym}
\usepackage{mathtools}
\usepackage{amsmath}
\usepackage{amsfonts}
\usepackage{amssymb}
\usepackage{amsthm}
\usepackage{comment}
\usepackage{stmaryrd}

\usepackage{graphicx}
\usepackage{mathrsfs}
\usepackage{xcolor}
\usepackage{exscale}
\usepackage{latexsym}
\usepackage{authblk}

\usepackage{enumerate}
\usepackage[shortlabels]{enumitem}
\usepackage{bookmark}
\usepackage{wasysym}
\usepackage[ddmmyyyy]{datetime}
\usepackage[margin=1in]{geometry}
\makeatletter
\g@addto@macro\normalsize{%
	\setlength\abovedisplayskip{4pt}
	\setlength\belowdisplayskip{4pt}
	\setlength\abovedisplayshortskip{4pt}
	\setlength\belowdisplayshortskip{4pt}
}
\numberwithin{equation}{section}
\everymath{\displaystyle}
\usepackage[capitalize,nameinlink]{cleveref}
\crefname{section}{Section}{Sections}
\crefname{subsection}{Subsection}{Subsections}
\crefname{condition}{Condition}{Conditions}
\crefname{hypothesis}{Hypothesis}{Conditions}
\crefname{lemma}{Lemma}{Lemmas}
\crefname{definition}{Definition}{Definitions}

\crefformat{equation}{\textup{#2(#1)#3}}
\crefrangeformat{equation}{\textup{#3(#1)#4--#5(#2)#6}}
\crefmultiformat{equation}{\textup{#2(#1)#3}}{ and \textup{#2(#1)#3}}
{, \textup{#2(#1)#3}}{, and \textup{#2(#1)#3}}
\crefrangemultiformat{equation}{\textup{#3(#1)#4--#5(#2)#6}}%
{ and \textup{#3(#1)#4--#5(#2)#6}}{, \textup{#3(#1)#4--#5(#2)#6}}%
{, and \textup{#3(#1)#4--#5(#2)#6}}

\Crefformat{equation}{#2Equation~\textup{(#1)}#3}
\Crefrangeformat{equation}{Equations~\textup{#3(#1)#4--#5(#2)#6}}
\Crefmultiformat{equation}{Equations~\textup{#2(#1)#3}}{ and \textup{#2(#1)#3}}
{, \textup{#2(#1)#3}}{, and \textup{#2(#1)#3}}
\Crefrangemultiformat{equation}{Equations~\textup{#3(#1)#4--#5(#2)#6}}%
{ and \textup{#3(#1)#4--#5(#2)#6}}{, \textup{#3(#1)#4--#5(#2)#6}}%
{, and \textup{#3(#1)#4--#5(#2)#6}}

\crefdefaultlabelformat{#2\textup{#1}#3}
\newtheorem{theorem} {Theorem}[section]

\newtheorem{lemma}[theorem]{Lemma}

\newtheorem{counter example}[theorem]{Counter Example}
\newtheorem{remark}[theorem] {Remark}
\newtheorem{definition}[theorem] {Definition}
\def\CC{{\rm \kern.24em \vrule width.02em height1.4ex depth-.05ex \kern-.26emC}}

\def\TagOnRight

\def\AA{{it I} \hskip-3pt{\tt A}}

\def\QQ{\rlap {\raise 0.4ex \hbox{$\scriptscriptstyle |$}} {\hskip -0.1em Q}}

\makeatletter
\newcommand{\vo}{\vec{o}\@ifnextchar{^}{\,}{}}
\makeatother
\def\YYint#1#2#3{{\setbox0=\hbox{$#1{#2#3}{\iint}$}
		\vcenter{\hbox{$#2#3$}}\kern-.50\wd0}}
\def\bm \sigmant#1{\mathchoice
	{\XXint\displaystyle\textstyle{#1}}%
	{\XXint\textstyle\scriptstyle{#1}}%
	{\XXint\scriptstyle\scriptscriptstyle{#1}}%
	{\XXint\scriptscriptstyle\scriptscriptstyle{#1}}%
	\!\int}
\def\XXint#1#2#3{{\setbox0=\hbox{$#1{#2#3}{\int}$}
		\vcenter{\hbox{$#2#3$}}\kern-.50\wd0}}

\makeatletter
\def\namedlabel#1#2{\begingroup
	\def\@currentlabel{#2}%
	\label{#1}\endgroup
}
\makeatother
\makeatletter
\newcommand{\rmh}[1]{\mathpalette{\raisem@th{#1}}}
\newcommand{\raisem@th}[3]{\hspace*{-1pt}\raisebox{#1}{$#2#3$}}
\makeatother

\newcounter{desccount}

\newcommand{\descref}[2]{\hyperref[#1]{\textnormal{\textcolor{black}{}\textcolor{blue}{ #2}\textcolor{black}{}}}}
\newcommand{\dref}[2]{\hyperref[#1]{\textcolor{black}{(}\textcolor{blue}{\bf #2}\textcolor{black}{)}}}
\newcommand{\be} {\begin{eqnarray}}
	\newcommand{\ee} {\end{eqnarray}}
\newcommand{\Bea} {\begin{eqnarray*}}
	\newcommand{\Eea} {\end{eqnarray*}}
\newcounter{whitney}
\refstepcounter{whitney}

\newcounter{ineqcounter}
\refstepcounter{ineqcounter}
\makeatletter
\def\ps@pprintTitle{%
	\let\@oddhead\@empty
	\let\@evenhead\@empty
	\def\@oddfoot{}%
	\let\@evenfoot\@oddfoot}
\makeatother
\usepackage[doublespacing]{setspace}
\usepackage[titletoc,toc,page]{appendix}

\makeatletter
\newcommand{\refcheckize}[1]{%
	\expandafter\let\csname @@\string#1\endcsname#1%
	\expandafter\DeclareRobustCommand\csname relax\string#1\endcsname[1]{%
		\csname @@\string#1\endcsname{##1}\wrtusdrf{##1}}%
	\expandafter\let\expandafter#1\csname relax\string#1\endcsname
}
\makeatother
\refcheckize{\cref}
\refcheckize{\Cref}


\makeatletter
\newcommand{\mainsectionstyle}{%
	\renewcommand{\@secnumfont}{\bfseries}
	\renewcommand\section{\@startsection{section}{2}%
		\z@{.5\linespacing\@plus.7\linespacing}{-.5em}%
		{\normalfont\bfseries}}%
}
\makeatother
\usepackage{pgf,tikz}
\usetikzlibrary{arrows}
\usetikzlibrary{decorations.pathreplacing}
\usepackage{xpatch}
\xpatchcmd{\MaketitleBox}{\hrule}{}{}{}% remove first horizontal rule (above abstract)
\xpatchcmd{\MaketitleBox}{\hrule}{}{}{}% remoce second horizonral rule (below keywords)

\date{}
\usepackage{scalerel}
\makeatletter

\usepackage{subcaption}
\usepackage{hyperref}
\usepackage{caption}
\usepackage{subcaption}

\usepackage[utf8]{inputenc}
\allowdisplaybreaks
\usepackage{amsmath}
\usepackage{graphicx}
\usepackage{mathtools} 
\usepackage{hyperref}       
\usepackage{url}      
\usepackage{mathrsfs}
\usepackage{graphicx}
\usepackage{array}
\usepackage{color} 
\usepackage{tabularx}
\usepackage{amsmath,mathdots,amsthm}
\usepackage{amssymb}
\usepackage{amsfonts}
\usepackage{xcolor}
\usepackage{bm}
\usepackage{soul}
\usepackage{float}
\usepackage{cite}

\hypersetup{
	colorlinks=true,                          
	linkcolor=blue, % equation, section link color
	citecolor=blue, % bib color
	urlcolor=black  % url color if any
} 
\usepackage{orcidlink}

 \numberwithin{equation}{section}

\numberwithin{equation}{section}
\newtheorem*{assumption}{Assumption ($\mathcal{A}$)}

\newcommand{\rel}{^{\epsilon}}

\newcommand{\eps}{\varepsilon}

\newcommand{\half}{\frac{1}{2}}

\title{A structure-preserving implicit-explicit method for a hyperbolic approximation of fourth-order PDEs}
\author[1]{Rahul Barthwal\thanks{\href{mailto:rahul.barthwal@mathematik.uni-stuttgart.de}{rahul.barthwal@mathematik.uni-stuttgart.de}}}
\author[2]{Rahuldev Ghorai\thanks{\href{mailto:rahuldev.ghorai@inria.fr}{rahuldev.ghorai@inria.fr}, 
\href{mailto:rghorai@unistra.fr}{rghorai@unistra.fr}}}
\affil[1]{\footnotesize Institute of Applied Analysis and Numerical Simulation, University of Stuttgart\\

Pfaffenwaldring 57, 70569 Stuttgart, Germany}
\affil[2]{\footnotesize Université de Strasbourg, CNRS, Inria, IRMA, F-67000, Strasbourg, France}
\hypersetup{
pdftitle={},
pdfauthor={},
pdfkeywords={},
}
\begin{document}
\maketitle

\begin{abstract}
We introduce a novel structure-preserving numerical method for a first-order hyperbolic approximation system, which approximates the solutions of general fourth-order partial differential equations. By employing an implicit-explicit (IMEX) splitting between the stiff and non-stiff terms, we rigorously prove that the proposed scheme is energy-consistent and positivity-preserving under a CFL-type condition. Furthermore, we show that the method remains robustly stable in asymptotic regimes, with a time-step restriction that is entirely independent of the relaxation parameters. Finally, we present a series of numerical examples for thin film equations to validate the theoretical properties and efficacy of the scheme.
\end{abstract}
{\textbf{Key words}. Fourth-order thin film equations, hyperbolic relaxation systems, IMEX scheme, structure-preserving methods.}
\medskip 
\section{Introduction}
In this article, we develop an asymptotic-preserving, energy-stable, and positivity-preserving implicit-explicit scheme for a lower-order relaxation system, which approximates solutions of the Cauchy problem for the fourth-order nonlinear PDEs of the form
\begin{align}
\label{eq: main}
u_t + \nabla \cdot \left( \mathcal{M}(u)\nabla\big(\gamma \Delta u - \Pi(u)\big) \right) = 0
\quad \text{in } \Omega_T := (0,T) \times  \Omega.
\end{align}
In \eqref{eq: main}, $\Omega=\mathbb{T}^d$ denotes the $d$-dimensional Torus. We supplement \eqref{eq: main} with initial data of the form
\begin{align}\label{initial_data_main}
    u(0, \cdot)=u_0> 0.
\end{align} %Accordingly, the solution is subject to periodic boundary conditions in each spatial direction.

Equations of the form \eqref{eq: main} arise naturally in the modelling of thin-film flows, interfacial dynamics, and related diffusion-driven processes. Here, $u$ typically represents a nonnegative height, $\gamma>0$ is a capillarity coefficient, $\mathcal M(u)$ is a nonlinear mobility, and $\Pi(u)$ denotes a lower-order pressure or potential term. In thin-film applications, the structure of $\mathcal M$ is dictated by the boundary condition at the liquid-solid interface. For instance, the no-slip condition leads to the degenerate mobility $\mathcal M(u)=u^3$, while Navier-slip models give mobilities of the form $\mathcal M(u)=u^3+\varepsilon u^n,\, \varepsilon>0,\, n\in(0,3).$ The degeneracy of $\mathcal M$ at $u=0$, together with possible singularities of $\Pi$, makes the design and analysis of robust numerical schemes particularly challenging.

A central analytical feature of \eqref{eq: main} is its gradient-flow structure. Formally, the smooth solutions of \eqref{eq: main} dissipates the free energy
\begin{align}\label{limit_energy}
E[u] = \int_{\Omega} \left( \frac{\gamma}{2}|\nabla u|^2 + W(u) \right)\, d\mathbf{x},
\end{align}
where $W'(u)=\Pi(u)$. This energy structure plays a crucial role in the existence theory and qualitative analysis of fourth-order parabolic equations; see, for example, \cite{bernis1990higher,giacomelli1999fourth,giacomelli2014well,gnann2018navier,dai2016weak}. From the numerical point of view, however, the fourth-order nature of the equation, the nonlinear mobility, and the positivity constraint $u> 0$ create substantial difficulties in designing robust discretizations. In particular, standard discretizations may fail to preserve non-negativity and may also destroy the energy-dissipation mechanism that governs the continuous problem.

For this reason, considerable effort has been devoted to the construction of numerical methods that preserve the structural properties of \eqref{eq: main}. Positivity-preserving and energy/entropy-stable schemes for thin-film type equations have been proposed in several works, particularly for degenerate mobilities and singular pressure terms; see, for example, \cite{zhornitskaya1999positivity,grun2000nonnegativity, kim2024positivity}. These schemes are designed to ensure mass conservation, non-negativity of the discrete solution, and monotone decay of the discrete free energy and entropy. Such properties are not merely technical conveniences but are essential for obtaining physically meaningful approximations, especially in underresolved regimes. As pointed out in the review paper by Bertozzi \cite{bertozzi1998mathematics}, even when the analytical solution remains strictly positive, a generic numerical scheme may produce negative values if the mesh is not sufficiently fine. This observation motivates the development of structure-preserving methods that remain stable and reliable beyond the asymptotically resolved regime.

One approach to develop structure-preserving methods to approximate the solutions of the Cauchy problem \eqref{eq: main}-\eqref{initial_data_main} is to approximate the fourth-order equation \eqref{eq: main} with lower-order approximate systems. Such reformulations are attractive because they allow one to design numerical methods using tools developed for first-order or mixed-order systems, while still retaining the entropy/ energy structure of the original equation. Lower-order approximations and relaxation-type formulations have therefore become a useful strategy for studying higher-order nonlinear PDEs. More recently, hyperbolic relaxation systems have been proposed as effective approximations of fourth-order diffusion equations, providing a framework in which energy stability, asymptotic consistency, and efficient time discretization can be addressed simultaneously; see, for instance, \cite{barthwal2025hyperbolic, barthwal2026energy, dhaouadi2025first,giesselmann2025convergence} and references cited therein. In particular, structure-preserving asymptotically stable methods have been developed for these relaxation systems in recent years; see e.g. \cite{albi2020implicit, boscarino2017unified, ma2025uniform,jin1995relaxation}. 

In this direction, Barthwal et al. \cite{barthwal2026energy} introduced an energy-consistent hyperbolic relaxation system approximating the solutions of \eqref{eq: main}-\eqref{initial_data_main}. Their proposed system depends on positive parameters $\alpha, \beta(\alpha)$, and $\tau$, and is given by 
\begin{equation}\label{hyperbolic_system_with_alpha}
\begin{aligned}
     u^{\alpha, \tau}_t+\nabla\cdot \mathbf{q}^{\alpha, \tau}&=0,\\
     \dfrac{1}{\alpha}\psi^{\alpha, \tau}_t+\nabla\cdot \mathbf{q}^{\alpha, \tau}&=-w^{\alpha, \tau},\\
     \tau\mathbf{q}^{\alpha, \tau}_t+\nabla(\Pi(u^{\alpha, \tau})+\psi^{\alpha, \tau})&=-\dfrac{\mathbf{q}^{\alpha, \tau}}{\mathcal{M}(u^{\alpha, \tau})},\\
    \beta w^{\alpha, \tau}_t-\gamma \nabla\cdot \mathbf{p}^{\alpha, \tau}&=\psi^{\alpha, \tau},\\
    \mathbf{p}^{\alpha, \tau}_t-\nabla w^{\alpha, \tau}&=0,
\end{aligned} \quad \mathrm{in}\quad \Omega_T.
\end{equation}
In this article, we particularly focus on one parameter family of \eqref{hyperbolic_system_with_alpha} as our main system. For simplicity, we set $1/\alpha=\tau=\beta=\epsilon$, where $\epsilon\in(0,1]$ acts as a single relaxation parameter. The system \eqref{hyperbolic_system_with_alpha} then reduces to
\begin{equation}\label{hyperbolic_system}
\begin{aligned}
u^{\epsilon}_t+\nabla\cdot \mathbf{q}^{\epsilon}&=0,\\
\epsilon \psi^{\epsilon}_t+\nabla\cdot \mathbf{q}^{\epsilon}&=-w^{\epsilon},\\
\epsilon\mathbf{q}^{\epsilon}_t+\nabla(\Pi(u^{\epsilon})+\psi^{\epsilon})&=-\dfrac{\mathbf{q}^{\epsilon}}{\mathcal{M}(u^{\epsilon})},\\
\epsilon w^{\epsilon}_t-\gamma \nabla\cdot \mathbf{p}^{\epsilon}&=\psi^{\epsilon},\\
\mathbf{p}^{\epsilon}_t-\nabla w^{\epsilon}&=0,
\end{aligned} \quad \mathrm{in}\quad \Omega_T.
\end{equation}
In the formal stiff relaxation limit ($\epsilon\to 0$), the auxiliary variables naturally recover the components of the fourth-order limit equation. Precisely, $\psi^\epsilon$ approximates $-\gamma\Delta u$, $\mathbf q^\epsilon$ approximates the nonlinear flux $-\mathcal M(u)\nabla\bigl(\Pi(u)-\gamma\Delta u\bigr),$  $w^\epsilon$ approximates $u_t$, and $\mathbf p^\epsilon$ approximates $\nabla u$. 

Crucially, the hyperbolic system \eqref{hyperbolic_system} inherits an energy structure analogous to the limit equation \eqref{eq: main}, defined by 
\begin{equation}\label{convex-energy}
    E^{\epsilon} =\displaystyle\int_{\Omega} \left(W(u\rel)+\dfrac{\epsilon({\psi\rel})^2}{2}+\dfrac{\epsilon {|\mathbf{q}\rel|}^2}{2}+\dfrac{\epsilon (w\rel)^2}{2}+\dfrac{\gamma {|\mathbf{p}\rel|}^2}{2}\right)\, d\mathbf{x}. 
\end{equation}
As $\epsilon \to 0$, $E^{\epsilon}$ formally converges to the limit energy defined in \eqref{limit_energy}, demonstrating that the system \eqref{hyperbolic_system_with_alpha} or equivalently \eqref{hyperbolic_system} is an energy-consistent relaxation system. 

Barthwal et al. \cite{barthwal2026energy} rigorously justified the convergence of weak entropy solutions of the Cauchy problem \eqref{hyperbolic_system_with_alpha}-\eqref{initial_data_main} using the relative energy framework. Under certain structural conditions on the mobility function $\mathcal{M}(u)$ and the pressure function $\Pi(u)$ (see Assumptions (\ref{A.1})-(\ref{A.2}) below), they proved that the Bregman distance between the weak entropy solutions of the relaxation system and the sufficiently smooth solutions of \eqref{eq: main} vanishes as the relaxation parameters tend to their stiff relaxation limit. This establishes that the solutions of the proposed system converge to those of the limit equation for the maximal time of existence of smooth solutions. They further validated their analytical results using tailored numerical solvers for hyperbolic PDEs.

However, the numerical scheme used in \cite{barthwal2026energy} does not preserve the structural properties such as energy inequality, positivity, and conservation properties of the continuous relaxation system at the fully discrete level. This significantly limits the applicability of the tailored solvers to more challenging regimes and broader physical settings. It is therefore crucial to develop numerical methods for \eqref{hyperbolic_system} that are provably structure-preserving. In particular, for singularly perturbed systems such as \eqref{hyperbolic_system}, a numerical scheme should satisfy the Asymptotic Preserving (AP) property \cite{Jin99}, which ensures that the scheme remains stable and consistent in the stiff relaxation limit $\epsilon\to0$ and, ideally, provides a consistent discretization of the limiting system without requiring the computational mesh to resolve the small relaxation scale.

A particularly effective strategy for constructing AP schemes is based on Implicit--Explicit (IMEX) time discretizations, in which the stiff terms are treated implicitly while the non-stiff terms are handled explicitly; see, for example, \cite{dimarco2018second, BAL+14, BLY17, BRS18, CDK12, DT11, DP13, HJL12, Kle95, NBA+14}. Such a treatment relaxes the severe CFL restrictions associated with fully explicit methods in the stiff regime, while avoiding the computational cost of solving the large, typically dense nonlinear systems arising from fully implicit discretizations.

It is important to point out that, for the thin-film applications governed by \eqref{eq: main} and approximated by \eqref{hyperbolic_system}, asymptotic preservation alone is not sufficient. The numerical method must also retain the fundamental physical and mathematical structures of the underlying system. In particular, positivity preservation is essential, since the primary unknown $u$ represents the local thickness of the fluid film. A loss of positivity may lead to nonphysical states, singular coefficients, and ultimately numerical breakdown. Although AP schemes for a wide range of singularly perturbed systems are well established, results that simultaneously incorporate such physical constraints remain comparatively scarce. Notable examples of structure-preserving AP methods include \cite{AGK23} for the stiff barotropic Euler system and \cite{HSZ18} for the stiff BGK equation.

These considerations motivate the development of a numerical scheme for singularly perturbed systems of the form \eqref{hyperbolic_system} that is simultaneously asymptotic preserving, energy consistent, and strictly positivity preserving. Such a scheme should not only remain robust in the singular relaxation regime, but also faithfully reproduce the key structural properties of both the relaxation system and its limiting equation. The purpose of the present work is therefore to exploit the lower-order formulation \eqref{hyperbolic_system} at the fully discrete level and construct a numerical method that accurately captures the behavior of the solution as the relaxation parameter tends to zero. In particular, we employ an IMEX time discretization in which the stiff relaxation terms are treated implicitly, while the remaining nonlinear transport and coupling terms are treated explicitly.

A central difficulty is to establish a discrete energy inequality for the fully discrete scheme. To this end, suitable numerical viscosity terms are introduced into the discretization; see Theorem \ref{thm:eng-stable}.  Our construction follows an approach similar in spirit to that of \cite{berthon2023artificial}, where artificial viscosity is incorporated in a manner compatible with the underlying energy structure. While the addition of artificial viscosity in all equations can readily ensure stability, at the same time they will be implicit in time. This may lead to a computationally expensive coupled system. A key challenge is therefore to identify the minimal set of diffusive terms needed to establish the discrete energy stability, while retaining an efficient semi-implicit formulation and the AP property. We therefore introduce diffusion only where necessary for the discrete energy estimate, thereby reducing the implicitness while retaining energy stability and the AP property. Moreover, because the stiffness is present in both the source and flux contributions, part of the considered numerical viscosity is still implicit in time. However, we retain a linearly implicit structure such that the scheme can be implemented without Newton-type nonlinear iterations.

The resulting method is AP and, in the limit of vanishing relaxation parameter, reduces to a consistent discretization of the limiting fourth-order equation without requiring the numerical mesh to resolve the small relaxation scale. At the same time, the scheme preserves the other key structural properties of the continuous model, including the conservation of relevant physical quantities, the discrete energy inequality, and positivity of the film height. These properties are essential for obtaining robust numerical approximations of the underlying degenerate fourth-order thin-film dynamics.

To make the following discussions precise, we now state the explicit assumptions on the mobility function $\mathcal{M}(u)$ and the pressure function $\Pi(u)$. They are assumed to satisfy the following conditions.
%\medskip\\
\begin{assumption}\label{assumption.A}
\leavevmode
Let $L, m_0, \delta>0$ be constants. We have
\begin{enumerate}[label=($\mathcal{A}.\arabic*$),ref=$\mathcal{A}.\arabic*$]
  \item\label{A.1}The mobility function $\mathcal{M}:\mathbb{R}\to\mathbb{R}_+$ satisfies $\mathcal{M}\in C^\infty(\mathbb{R})$ with $\max_{v\in \mathbb{R}}\,\lvert \mathcal{M}'(v)\rvert < L$ and $\mathcal{M}(v)>1/m_0>0$ for all $v\in [0, \infty)$.
  \item\label{A.2} The pressure function $\Pi:\mathbb{R}\to\mathbb{R}$ satisfies $\Pi\in C^\infty(\mathbb{R})$ with $\max_{v\in \mathbb{R}}\lvert \Pi'(v)\rvert,\,\, \max_{v\in \mathbb{R}}\lvert \Pi''(v)\rvert  < L$ and $\Pi'(v)\geq \delta$ for all $v\in [0, \infty)$.
\end{enumerate}
\end{assumption}

The rest of the article is organized as follows. In section \ref{sec:discrete_spaces} we define the discrete notations and provide the basic discrete identities required in the article. Section \ref{sec:structure-preserving_scheme} is devoted to developing the numerical scheme for the system \eqref{hyperbolic_system} and proving its structure-preserving properties. In Section \ref{sec:numerics}, we provide a series of numerical test cases which validate the accuracy and robustness of the numerical scheme. Conclusions and future outlooks are provided in Section \ref{sec: conclusions}.%%%%%write down the structure here.

\section{Notations, discrete spaces and basic discrete identities}\label{sec:discrete_spaces}
In this section, we collect some basic discrete identities for forward, backward, and central differences under periodic boundary conditions that are utilized in the following sections. Let $\Omega=\{(a,b)^d: a, b\in\mathbb{R}, a<b, d\in \mathbb{N}\}$ be the computational domain, and let $\Delta x=(b-a)/N$ denote the step size, which is considered to be uniform in each spatial direction. More precisely, we consider the uniform Cartesian mesh of the form
\[
\mathbf{x}_\mathbf{j}=\mathbf{j}\Delta x,\qquad \mathbf{j}=(j_1,\ldots,j_d),
\qquad 0\leq j_\alpha\leq N-1,\quad \alpha=1,\ldots,d.
\]
We denote by $\mathbf{e}_\alpha$ the unit multi-index in the $\alpha$-th coordinate direction. A scalar grid function is denoted by
$f=\{f_{\mathbf j}\}_{\mathbf j}$, while a vector-valued grid function is denoted by
$\mathbf v=\{\mathbf v_{\mathbf j}\}_{\mathbf j}$, with
\[
\mathbf v_{\mathbf j}=(v_{\mathbf{j}}^1,\ldots,v_{\mathbf{j}}^d).
\]
For a scalar grid function $f$, we define the forward and backward difference operators in the $\alpha$-th coordinate direction by
\[
(D_\alpha^+ f)_{\mathbf j}=\frac{f_{\mathbf j+\mathbf e_\alpha}-f_{\mathbf j}}{\Delta x},
\qquad
(D_\alpha^- f)_{\mathbf j}=\frac{f_{\mathbf j}-f_{\mathbf j-\mathbf e_\alpha}}{\Delta x}.
\]
The discrete gradient and divergence are then defined as
\[
(\nabla_h^+ f)_{\mathbf j}=\bigl((D_1^+f)_{\mathbf j},\ldots,(D_d^+f)_{\mathbf j}\bigr),
\]
and
\[
(\nabla_h^-\cdot\mathbf v)_{\mathbf j}=\sum_{\alpha=1}^d (D_\alpha^-v^\alpha)_{\mathbf j}.
\]
Moreover, we define the centered second-order discrete Laplacian as
\[
(\Delta_h f)_{\mathbf{j}}:=(\nabla_h^-\cdot\nabla_h^+f)_{\mathbf{j}}
=
\sum_{\alpha=1}^d (D_\alpha^-D_\alpha^+f)_{\mathbf{j}},
\]
where for vector-valued grid functions, $\Delta_h\mathbf v$ is understood componentwise, i.e.
\[
(\Delta_h\mathbf v)_{\mathbf j}=\bigl((\Delta_h v^1)_{\mathbf j},\ldots,(\Delta_h v^d)_{\mathbf j}\bigr),
\]
and also $\nabla_h^+$ and $\nabla_h^-$:

\[
(\nabla_h^{\pm}\mathbf v)_{\mathbf{j}}
:=
\big((D_\alpha^{\pm}v^\beta)_{\mathbf{j}}\big)_{\beta,\alpha=1}^d,
\]
In particular, for a two-dimensional Cartesian mesh, we write
\[
\mathbf j=(i,k),\qquad\mathbf x_{\mathbf j}=(x_i,y_k)=(i\Delta x,k\Delta x).
\]
Then
\[
(D_1^-D_1^+f)_{i,k}=\frac{f_{i+1,k}-2f_{i,k}+f_{i-1,k}}{\Delta x^2},
\]
and
\[
(D_2^-D_2^+f)_{i,k}=\frac{f_{i,k+1}-2f_{i,k}+f_{i,k-1}}{\Delta x^2}.
\]
Therefore,
\[
\begin{aligned}
(\Delta_h f)_{i,k}&=(D_1^-D_1^+f)_{i,k}+(D_2^-D_2^+f)_{i,k} \\
&=\frac{f_{i+1,k}-2f_{i,k}+f_{i-1,k}}{\Delta x^2}+\frac{f_{i,k+1}-2f_{i,k}+f_{i,k-1}}{\Delta x^2}.
\end{aligned}
\]
Equivalently,
\[
(\Delta_h f)_{i,k}=\frac{f_{i+1,k}+f_{i-1,k}+f_{i,k+1}+f_{i,k-1}-4f_{i,k}}{\Delta x^2},
\]
which is the classical five-point discrete Laplacian.

We denote by $L_{\Delta}^2$ the space of scalar grid functions and by $L_{\Delta,0}^2$ the subspace of grid functions satisfying homogeneous or periodic boundary conditions on $\partial\Omega$. The discrete scalar product for scalar grid functions $f$ and $g$ is defined as 
\[
{\langle f,g\rangle}_\Delta:=\Delta x^d\sum_{\mathbf j} f_{\mathbf j}g_{\mathbf j},
\]
with corresponding norm
\[
{\|f\|}_{2, \Delta}:=\left(\Delta x^d\sum_{\mathbf j}|f_{\mathbf j}|^2\right)^{1/2}.
\]
For vector-valued grid functions $\mathbf u$ and $\mathbf v$, we define the discrete inner product
\[
{\langle \mathbf u,\mathbf v\rangle}_\Delta:=\Delta x^d\sum_{\mathbf j}\mathbf u_{\mathbf j}\cdot\mathbf v_{\mathbf j},
\]
and the associated discrete $L^2$ norm as
\[
{\|\mathbf v\|}_{2, \Delta}:=\left(\Delta x^d\sum_{\mathbf j}|\mathbf v_{\mathbf j}|^2\right)^{1/2}.
\]
We also use the discrete $L^\infty$-norm
\[
{\|f\|}_{\infty,\Delta}:=\max_{\mathbf j}|f_{\mathbf j}|.
\]
The following identities are the discrete analogues of integration by parts. They hold under periodic boundary conditions, or under homogeneous boundary conditions for which the corresponding boundary terms vanish. We write some of them with proof and some which are without proof can be easily derived from classical Abel’s Lemma on summation by parts (SBP) (for example, see \cite{chu2007abel}). 
\begin{lemma}[Discrete integration by parts (SBP property)]
\label{lemma:discrete_ibp}
For scalar grid functions $f,g\in L_{\Delta,0}^2$ and vector-valued grid functions $\mathbf v\in (L_{\Delta,0}^2)^d$, the following discrete 
\[
{\langle \nabla_h^+ f,\mathbf v\rangle}_\Delta
=
-{\langle f,\nabla_h^-\cdot\mathbf v\rangle}_\Delta.
\]
Equivalently,
\[
{\langle\nabla_h^-\cdot\mathbf v,f\rangle}_\Delta
=
-{\langle\mathbf v,\nabla_h^+ f\rangle}_\Delta.
\]
In particular,
\[
{\langle\Delta_h f,f\rangle}_\Delta
=
-{\|\nabla_h^+f\|}_{2, \Delta}^2.
\]
For vector-valued grid functions,
\[
{\langle\Delta_h\mathbf v,\mathbf v\rangle}_\Delta
=
-{\|\nabla_h^+\mathbf v\|}_{2, \Delta}^2.
\]
\end{lemma}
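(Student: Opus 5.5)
The plan is to reduce everything to a one-dimensional Abel summation by parts, applied separately in each coordinate direction, and then to assemble the multidimensional identities by linearity. The key fact is that, under periodic boundary conditions, the index shift $\mathbf j\mapsto\mathbf j+\mathbf e_\alpha$ is a bijection of the index set. Hence
\[
\sum_{\mathbf j} f_{\mathbf j+\mathbf e_\alpha}v^\alpha_{\mathbf j}=\sum_{\mathbf j} f_{\mathbf j}v^\alpha_{\mathbf j-\mathbf e_\alpha}.
\]
In the homogeneous boundary case, the same reindexing holds because the terms that would leave the grid vanish.

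First I will expand the left-hand side by the definitions:
\[
{\langle \nabla_h^+ f,\mathbf v\rangle}_\Delta=\sum_{\alpha=1}^d\Delta x^{d-1}\sum_{\mathbf j}\bigl(f_{\mathbf j+\mathbf e_\alpha}-f_{\mathbf j}\bigr)v^\alpha_{\mathbf j}.
\]
Applying the shift to the first term gives, for each $\alpha$,
\[
\Delta x^{d-1}\sum_{\mathbf j} f_{\mathbf j}\bigl(v^\alpha_{\mathbf j-\mathbf e_\alpha}-v^\alpha_{\mathbf j}\bigr)=-\Delta x^d\sum_{\mathbf j}f_{\mathbf j}(D_\alpha^-v^\alpha)_{\mathbf j}.
\]
Summing over $\alpha$ yields $-{\langle f,\nabla_h^-\cdot\mathbf v\rangle}_\Delta$. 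The "equivalently" form then follows from the symmetry of the real inner products.

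For the Laplacian identity, I will set $\mathbf v=\nabla_h^+f$, which is an admissible periodic (or boundary-compatible) vector grid function. Since $\Delta_h f=\nabla_h^-\cdot\nabla_h^+f$ by definition, the first identity gives
\[
{\langle\Delta_h f,f\rangle}_\Delta=-{\langle\nabla_h^+f,\nabla_h^+f\rangle}_\Delta=-{\|\nabla_h^+f\|}_{2,\Delta}^2.
\]
For vector-valued $\mathbf v$, I will apply this scalar result to each component $v^\beta$ and sum over $\beta$. This works because $\Delta_h$ acts componentwise and ${\|\nabla_h^+\mathbf v\|}_{2,\Delta}^2=\Delta x^d\sum_{\mathbf j}\sum_{\alpha,\beta}|(D_\alpha^+v^\beta)_{\mathbf j}|^2$, which is the Frobenius-type norm consistent with the matrix definition of $\nabla_h^+\mathbf v$.

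The only step requiring care is the reindexing in the non-periodic case. There I need to check that the boundary terms $f_{N}v^\alpha_{N-1}$ and $f_0v^\alpha_{-1}$, in the direction $\alpha$, vanish under the stated homogeneous conditions. I will take this as the defining property of $L^2_{\Delta,0}$, as the statement allows, and treat the periodic case as the main one.
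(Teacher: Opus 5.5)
Your argument is correct and follows the route the paper intends. The paper gives no written proof of this lemma and only cites Abel's summation by parts, which is exactly your direction-by-direction index shift $\mathbf j\mapsto\mathbf j+\mathbf e_\alpha$ under periodicity, with the Laplacian identities obtained by taking $\mathbf v=\nabla_h^+ f$ and summing over components using the Frobenius-type norm of $\nabla_h^+\mathbf v$.
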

In what follows, we also make use of the following elementary discrete estimate. For any vector-valued grid functions $\mathbf v\in (L_{\Delta,0}^2)^d$, we have
\begin{equation}
\label{eq:div_less_grad}
    \|\nabla_h^-\cdot\mathbf v\|_{2, \Delta}^2
\le d\,\|\nabla_h^\pm\mathbf v\|_{2, \Delta}^2,
\end{equation}
which follows directly from the Cauchy--Schwarz inequality,
\begin{equation}
\left|\sum_{i=1}^d a_i\right|^2
\le d\sum_{i=1}^d |a_i|^2,
\end{equation}
where $a_i\in\mathbb{R}$, for all $i\in\{1,...,d\}$.

We further make use of the following discrete inverse estimates for periodic boundary conditions:
\begin{lemma}[Discrete inverse estimate]
\label{lemma:discrete_inverse}
There exists a constant $C_d>0$, depending only on the space dimension $d$ and on the choice of finite difference operators, such that
\begin{equation}
    \label{eq:scalar_lap-grad}
    {\|\Delta_h f\|}_{2, \Delta}^2
\leq
\frac{C_d}{\Delta x^2}
{\|\nabla_h^+ f\|}_{2, \Delta}^2.
\end{equation}
For the standard Cartesian discretization defined above, one may take $C_d=4d$. The same estimate holds componentwise for vector-valued grid functions:
\begin{equation}
    \label{eq:vector_lap-grad}
    {\|\Delta_h\mathbf v\|}_{2, \Delta}^2
\leq
\frac{C_d}{\Delta x^2}
{\|\nabla_h^+\mathbf v\|}_{2, \Delta}^2.
\end{equation}
\end{lemma}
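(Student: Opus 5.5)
Write $g:=\nabla_h^+ f$, so that $g^\alpha_{\mathbf j}=(D_\alpha^+f)_{\mathbf j}$. Then $\Delta_h f=\nabla_h^-\cdot g=\sum_{\alpha=1}^d D_\alpha^- g^\alpha$. The estimate \eqref{eq:scalar_lap-grad} therefore reduces to two facts: a one-directional bound on the backward difference $D_\alpha^-$, and the Cauchy--Schwarz splitting of the sum over directions already used for \eqref{eq:div_less_grad}.

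\textbf{One-directional bound.} For any scalar grid function $h$ with periodic (or homogeneous) boundary conditions, $(D_\alpha^- h)_{\mathbf j}=(h_{\mathbf j}-h_{\mathbf j-\mathbf e_\alpha})/\Delta x$. Using $(a-b)^2\le 2a^2+2b^2$ gives
\[
{\|D_\alpha^- h\|}_{2,\Delta}^2\le \frac{2}{\Delta x^2}\,\Delta x^d\sum_{\mathbf j}\bigl(h_{\mathbf j}^2+h_{\mathbf j-\mathbf e_\alpha}^2\bigr)=\frac{4}{\Delta x^2}{\|h\|}_{2,\Delta}^2 .
\]
The equality holds because the shift $\mathbf j\mapsto\mathbf j-\mathbf e_\alpha$ is a bijection of the periodic index set, so the second sum equals the first. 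This shift invariance is the only place the boundary conditions enter.

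\textbf{Summing over directions.} Apply $\bigl|\sum_{\alpha=1}^d a_\alpha\bigr|^2\le d\sum_{\alpha=1}^d a_\alpha^2$ pointwise, then the one-directional bound with $h=g^\alpha$:
\[
{\|\Delta_h f\|}_{2,\Delta}^2\le d\sum_{\alpha=1}^d{\|D_\alpha^- g^\alpha\|}_{2,\Delta}^2\le \frac{4d}{\Delta x^2}\sum_{\alpha=1}^d{\|g^\alpha\|}_{2,\Delta}^2=\frac{4d}{\Delta x^2}{\|\nabla_h^+f\|}_{2,\Delta}^2 .
\]
This gives $C_d=4d$. For the vector estimate \eqref{eq:vector_lap-grad}, apply the scalar result to each component $v^\beta$ and sum over $\beta$. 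The left side becomes $\sum_\beta{\|\Delta_h v^\beta\|}_{2,\Delta}^2={\|\Delta_h\mathbf v\|}_{2,\Delta}^2$, and the right side becomes $\sum_\beta{\|\nabla_h^+ v^\beta\|}_{2,\Delta}^2={\|\nabla_h^+\mathbf v\|}_{2,\Delta}^2$, the Frobenius norm of the matrix $\bigl(D_\alpha^+v^\beta\bigr)_{\beta,\alpha}$.

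The argument is elementary and I see no real obstacle. The only step needing care is the shift-invariance used in the one-directional bound, which relies on periodicity and holds equally when boundary terms vanish. An alternative proof via the discrete Fourier transform would even give the sharper constant $C_d=4$, since the symbol of $D_\alpha^-$ has modulus at most $2/\Delta x$ and $|\widehat{\Delta_h f}|^2$ is bounded by $(4/\Delta x^2)\,|\widehat{\nabla_h^+f}|^2$. The direct argument above, however, suffices for the stated constant.
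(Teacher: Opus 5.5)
Your proof is correct and follows the paper's route. The paper proves the case $d=1$ by exactly your one-directional bound ($(a-b)^2\le 2(a^2+b^2)$ plus periodic shift invariance) and only asserts that $d>1$ is similar; you supply that step explicitly via $\bigl|\sum_{\alpha}a_\alpha\bigr|^2\le d\sum_\alpha a_\alpha^2$, which is where the factor $d$ in $C_d=4d$ comes from.

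One correction to your closing aside: for $d\ge 2$ the Fourier argument does not give $C_d=4$. The ratio $|\widehat{\Delta_h f}|^2/|\widehat{\nabla_h^+ f}|^2$ equals $\frac{4}{\Delta x^2}\sum_{\alpha=1}^d\sin^2(\xi_\alpha\Delta x/2)$, which attains $4d/\Delta x^2$ on the checkerboard mode $f_{\mathbf j}=(-1)^{j_1+\cdots+j_d}$ (for even $N$), so $4d$ is in fact sharp.
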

\begin{proof}
Here we prove this Lemma for $d = 1$ for the sake of simplicity, using
\[
(\Delta_h f)_i=\frac{(\nabla_h^+f)_i-(\nabla_h^+f)_{i-1}}{\Delta x},
\]
and the following standard inequality
\[
(a-b)^2\le 2(a^2+b^2),
\]
we obtain
\begin{align*}
\|\Delta_h f\|_{2,\Delta}^2
&=
\frac{1}{\Delta x^2}\sum_{i=0}^L
\Delta x((\nabla_h^+f)_i-(\nabla_h^+f)_{i-1})^2\\
&\le\frac{2}{\Delta x^2}\sum_{i=0}^{L-1}
\Delta x(\nabla_h^+f)_i^2
+\frac{1}{\Delta x^2}\Delta x (\nabla_h^+f)_{-1}^2+\frac{1}{\Delta x^2}\Delta x(\nabla_h^+f)_L^2.
\end{align*}
Thanks to periodic boundary, we further get
\begin{equation}
\|\Delta_h f\|_{2,\Delta}^2\le\frac{4}{\Delta x^2}\|\nabla_h^+f\|_{2,\Delta}^2.
\end{equation}
In this case, the estimate \eqref{eq:scalar_lap-grad} holds with $C_d=4$ and a similar procedure can be adapted to prove the estimate for $d>1$. Also the estimate \eqref{eq:vector_lap-grad} for the vector function $\mathbf{v}$ is now straightforward as all its components satisfy \eqref{eq:scalar_lap-grad}.
\end{proof}

\section{An energy stable semi-implicit scheme for the system \eqref{hyperbolic_system}}
\label{sec:structure-preserving_scheme}
In this section, we introduce a novel implicit--explicit scheme for the system \eqref{hyperbolic_system}. The scheme is constructed so that the numerical solution satisfies an asymptotic-preserving property, while the discrete energy remains stable and consistent with its continuous counterpart. We adopt a mixed implicit-explicit strategy motivated by AP time splitting, 
which was introduced in Jin et al. \cite{jin1998diffusive}. The stiff terms in \eqref{hyperbolic_system} are treated implicitly in the predictor stage, producing intermediate values, which are then updated through an explicit corrector step. In this sense, the method can also be interpreted as a predictor-corrector implicit-explicit scheme.
\subsection{Numerical scheme and existence of numerical solution}\label{sec: scheme}
Given the numerical solution $\mathbf{U}_{\mathbf{j}}^n=(u_{\mathbf{j}}^n,\psi_{\mathbf{j}}^n,\mathbf{q}_{\mathbf{j}}^n,w_{\mathbf{j}}^n,\mathbf{p}_{\mathbf{j}}^n)$ at time $t^n$, the predictor step computes the intermediate state $\mathbf{U}_{\mathbf{j}}^*=(u_{\mathbf{j}}^{*},\psi^*_{\mathbf{j}},\mathbf{q}_{\mathbf{j}}^{*},w_{\mathbf{j}}^{*},\mathbf{p}_{\mathbf{j}}^{*})$ as follows.\medskip \\
\textbf{Implicit step:}
\begin{subequations}
\label{eq:semi-imp}
\begin{align}
\frac{u_{\mathbf{j}}^{*} - u_{\mathbf{j}}^n}{\Delta t} &= 0, 
\label{eq:h_updt_star}\\
\frac{\psi^*_{\mathbf{j}} - \psi_{\mathbf{j}}^n}{\Delta t} 
&= -\frac{1}{\epsilon}
\left(
w^*_{\mathbf{j}}+(1-\epsilon)(\nabla_h^- \cdot \mathbf{q}^{*})_{\mathbf{j}}
\right),
\label{eq:psi_updt_star}\\
\frac{\mathbf{q}_{j}^{*}-\mathbf{q}_{j}^{n}}{\Delta t} 
&= -\frac{1}{\epsilon}
\left(
\frac{\mathbf{q}_{j}^{*}}{\mathcal{M}_{\mathbf{j}}^n}
+(1-\epsilon)\nabla_h^+ \bigl(\Pi^*+\psi^{*}\bigr)_{\mathbf{j}}
-r(\Delta_{h} \mathbf{q}^{*})_{\mathbf{j}}
\right),
\label{eq:q_updt_star}\\
\frac{w_{\mathbf{j}}^{*} - w_{\mathbf{j}}^{\,n}}{\Delta t} 
&= \frac{1}{\epsilon}
\left(
\psi^*_{\mathbf{j}}
+\gamma(1-\epsilon)(\nabla_h^- \cdot \mathbf{p}^{*})_{\mathbf{j}}
+s(\Delta_{h}w^{*})_{\mathbf{j}}
\right),
\label{eq:w_updt_star}\\
\frac{\mathbf{p}_{\mathbf{j}}^{*} - \mathbf{p}_{\mathbf{j}}^n}{\Delta t} &= 0.
\label{eq:p_updt_star}
\end{align}
\end{subequations}
In Lemma \ref{lemma: num_existence}, we prove that the predictor system \eqref{eq:semi-imp} is well-posed and admits a unique solution $\mathbf{U}_{\mathbf{j}}^*.$
The intermediate state is then used to compute the updated solution $\mathbf{U}_{\mathbf{j}}^{n+1}$ through the following corrector step.\medskip\\
\textbf{Explicit step:}
\begin{subequations}
\label{eq:expl}
\begin{align}
&\frac{u_{\mathbf{j}}^{n+1} - u_{\mathbf{j}}^{*}}{\Delta t} + (\nabla_h^- \cdot\mathbf{q}^*)_{\mathbf{j}} = r_1(\Delta_{h} \Pi^*)_{\mathbf{j}}, \label{eq:h_updt}\\
&\frac{\psi_{\mathbf{j}}^{\,n+1} - \psi^*_{\mathbf{j}}}{\Delta t} + (\nabla_h^- \cdot\mathbf{q}^*)_{\mathbf{j}} = r_2(\Delta_h \psi^*)_{\mathbf{j}}, \label{eq:psi_updt}\\
&\frac{\mathbf{q}_{j}
^{\,n+1}-\mathbf{q}_{j}^{*}}{\Delta t} + \nabla_h^+(\Pi^*+\psi^{*})_{\mathbf{j}} = 0, \label{eq:q_updt}\\
&\frac{w_{\mathbf{j}}^{\,n+1} - w_{\mathbf{j}}^{*}}{\Delta t} -\gamma (\nabla_h^- \cdot \mathbf{p}^*)_{\mathbf{j}} = 0, \label{eq:w_updt}\\
&\frac{\mathbf{p}_{\mathbf{j}}^{\,n+1} - \mathbf{p}_{\mathbf{j}}^{*}}{\Delta t} - (\nabla_h^+ w^{*})_{\mathbf{j}}= r_3(\Delta_h \mathbf{p}^*)_{\mathbf{j}}.\label{eq:p_updt}
\end{align}
\end{subequations}
where we use the notation $\Pi^n_{\mathbf{j}}=\Pi(u_{\mathbf{j}}^n)$,  $\Pi^*_{\mathbf{j}}=\Pi(u_{\mathbf{j}}^*)$ and $\mathcal{M}_{\mathbf{j}}^n:=\mathcal{M}(u_{\mathbf{j}}^n)$. In \eqref{eq:semi-imp}-\eqref{eq:expl}, $r, s, r_1, r_2, r_3$ are suitably chosen positive numbers. In particular, in order to ensure energy stability (see Theorem \ref{thm:eng-stable}), we choose 
\[
r=c_\mathbf{q} \Delta x, \quad s=c_w \Delta x, \quad r_1=c_u \epsilon\Delta t, \quad r_2=c_\psi \Delta t, \quad r_3=c_\mathbf{p}\epsilon d\gamma\Delta t,
\]
where $c_\mathbf{q}, c_w>0$ and $c_u, c_\psi, c_\mathbf{p}>1$ are suitably choosen constants. In the proof of Theorem  \ref{thm:eng-stable}, we choose in particular $c_\mathbf{q}=c_w=1$ and $c_u=c_\psi= c_\mathbf{p}=2$.

Before presenting some notable properties of the scheme \eqref{eq:semi-imp}-\eqref{eq:expl}, we introduce an important notation for well-prepared initial data. We assume throughout the paper that our initial data are well-prepared. This is mainly crucial to show the AP property and the energy stability of the numerical scheme. We define the well-prepared initial data as follows.
\begin{definition}[Well-prepared initial data]
\label{def:well-prepared_init}
A family of initial data $U_0^\epsilon
=
\left(u_0^\epsilon,\psi_0^\epsilon,\mathbf q_0^\epsilon,
w_0^\epsilon,\mathbf p_0^\epsilon\right)$ of \eqref{hyperbolic_system} is called well-prepared if
\begin{equation}\label{initial_data_hyp_relaxation}
\begin{aligned}
\nabla\cdot\mathbf q_0^\epsilon+w_0^\epsilon&=O(\epsilon),\\
\mathbf q_0^\epsilon+
\mathcal M(u_0^\epsilon)
\nabla\!\left(\Pi(u_0^\epsilon)+\psi_0^\epsilon\right)
&=O(\epsilon),\\
\psi_0^\epsilon+\gamma\nabla\cdot\mathbf p_0^\epsilon&=O(\epsilon),\\
\mathbf{p}_0^\epsilon=\nabla u_0^\epsilon&+O(\epsilon),
\end{aligned}
\end{equation}
and the initial energy is uniformly bounded, i.e.\ $E^\epsilon(0)\le C$, for some constant $C$ independent of $\epsilon$.
\end{definition}

First, we proceed to prove that the proposed scheme \eqref{eq:semi-imp}-\eqref{eq:expl} possesses a numerical solution and thus can be solved from time step $t=t^n$ to $t=t^{n+1}$. Since the corrector step only depends on the predictor values $\mathbf{U}_{\mathbf{j}}^*$, it is then sufficient to prove that the predictor step produces a unique numerical solution. We prove this in the following lemma, assuming that the discrete mobility function $\mathcal{M}_{\mathbf{j}}^n>0$ for all $n\geq 0$.
\begin{lemma}[Existence and uniqueness of the numerical solution]\label{lemma: num_existence}
Let $\mathbf{U}_{\mathbf{j}}^n$ be given and assume that the discrete mobility satisfies $\mathcal{M}_{\mathbf{j}}^n > 0$ for all $\mathbf{j}$ and $n\geq 0$. Then the numerical scheme as defined in \eqref{eq:semi-imp}-\eqref{eq:expl} admits a unique solution $\mathbf{U}_{\mathbf{j}}^{n+1}$.
\end{lemma}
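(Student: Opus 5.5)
The plan is to reduce the statement to the unique solvability of a square linear system for the predictor \eqref{eq:semi-imp}. The corrector \eqref{eq:expl} is fully explicit in $\mathbf U^*$, so once $\mathbf U^*$ exists and is unique, $\mathbf U^{n+1}$ is obtained by direct evaluation.

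\textbf{Step 1: reduce the predictor to a linear system.} By \eqref{eq:h_updt_star} and \eqref{eq:p_updt_star} we have $u^*=u^n$ and $\mathbf p^*=\mathbf p^n$. Hence $\Pi^*=\Pi(u^n)$, $\mathcal M^n_{\mathbf j}$ and $\nabla_h^-\cdot\mathbf p^*$ are known data. The remaining unknowns $(\psi^*,\mathbf q^*,w^*)$ satisfy \eqref{eq:psi_updt_star}, \eqref{eq:q_updt_star} and \eqref{eq:w_updt_star}. These form a \emph{linear} system on the finite-dimensional space of periodic grid functions, with as many equations as unknowns. The nonlinearities enter only through the frozen coefficients $1/\mathcal M^n_{\mathbf j}$ and the known source $\nabla_h^+\Pi^*$. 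It therefore suffices to show that the homogeneous system, obtained by setting $\psi^n=\mathbf q^n=w^n=0$ and dropping $\Pi^*$ and $\mathbf p^*$, has only the trivial solution. Injectivity then gives invertibility, and hence existence and uniqueness.

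\textbf{Step 2: energy argument for the homogeneous system.} Write the homogeneous equations as
\begin{align*}
\tfrac{1}{\Delta t}\psi+\tfrac1\epsilon\bigl(w+(1-\epsilon)\nabla_h^-\cdot\mathbf q\bigr)&=0,\\
\tfrac{1}{\Delta t}\mathbf q+\tfrac1\epsilon\bigl(\tfrac{\mathbf q}{\mathcal M^n}+(1-\epsilon)\nabla_h^+\psi-r\Delta_h\mathbf q\bigr)&=0,\\
\tfrac{1}{\Delta t}w-\tfrac1\epsilon\bigl(\psi+s\Delta_h w\bigr)&=0.
\end{align*}
Take the discrete inner products of these equations with $\psi$, $\mathbf q$ and $w$ respectively, and add.

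The coupling terms $\pm\frac1\epsilon\langle w,\psi\rangle_\Delta$ cancel exactly. By Lemma~\ref{lemma:discrete_ibp}, $\langle\nabla_h^-\cdot\mathbf q,\psi\rangle_\Delta+\langle\nabla_h^+\psi,\mathbf q\rangle_\Delta=0$, so the $(1-\epsilon)$ terms also cancel. The same lemma turns the viscous terms into $\frac r\epsilon\|\nabla_h^+\mathbf q\|_{2,\Delta}^2$ and $\frac s\epsilon\|\nabla_h^+w\|_{2,\Delta}^2$. This yields
\[
\tfrac1{\Delta t}\bigl(\|\psi\|_{2,\Delta}^2+\|\mathbf q\|_{2,\Delta}^2+\|w\|_{2,\Delta}^2\bigr)+\tfrac{\Delta x^d}{\epsilon}\sum_{\mathbf j}\tfrac{|\mathbf q_{\mathbf j}|^2}{\mathcal M^n_{\mathbf j}}+\tfrac r\epsilon\|\nabla_h^+\mathbf q\|_{2,\Delta}^2+\tfrac s\epsilon\|\nabla_h^+ w\|_{2,\Delta}^2=0 .
\]
Since $\mathcal M^n_{\mathbf j}>0$ and $r,s>0$, every term is nonnegative. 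Hence $\psi=\mathbf q=w=0$, which proves injectivity and therefore the unique solvability of the predictor.

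\textbf{Main obstacle.} The only delicate point is making sure the skew-symmetric couplings cancel exactly. This relies on two facts. First, the $\frac1\epsilon w$ term in the $\psi$-equation and the $\frac1\epsilon\psi$ term in the $w$-equation appear with opposite signs. Second, the $\mathbf q$--$\psi$ coupling uses the adjoint pair $\nabla_h^-\cdot$ and $\nabla_h^+$ with the same factor $(1-\epsilon)$. With these in place, positivity of $\mathcal M^n$ is the only structural hypothesis needed. No CFL condition on $\Delta t$ enters, and the bound on $\Delta t$ is independent of $\epsilon$. Finally, \eqref{eq:expl} defines $\mathbf U^{n+1}$ uniquely from $\mathbf U^*$.
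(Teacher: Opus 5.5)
Your proposal is correct and is essentially the paper's proof: both reduce the predictor to a square linear system in $(\psi^*,\mathbf q^*,w^*)$, then show the homogeneous system has only the trivial solution via an energy identity in which the $\psi^*$--$w^*$ and $\psi^*$--$\mathbf q^*$ couplings cancel by Lemma~\ref{lemma:discrete_ibp}. The only cosmetic difference is that the paper tests just the $\mathbf q^*$- and $w^*$-equations and then substitutes the $\psi^*$-relation, which is equivalent to your testing the $\psi^*$-equation with $\psi^*$.
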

\begin{proof}
Since $u^*=u^n$ and $\mathbf{p}_{\mathbf{j}}^{*}=\mathbf{p}_{\mathbf{j}}^{n}$ are known from the first equation of \eqref{eq:semi-imp}, the quantity $\Pi^*=\Pi(u^*)$ is also known in the predictor step. Hence, the remaining equations form a linear system for $(\psi^*,\mathbf{q}^*,w^*)$. It is therefore enough to show that the associated homogeneous system has only the trivial solution. Therefore, we consider the scheme  \eqref{eq:semi-imp} with zero right-hand side, which leads to
\begin{subequations}
\label{eq:exist_system}
\begin{align}
\frac{u_{\mathbf{j}}^{*}}{\Delta t} &= 0, \label{eq:h_null}\\
\frac{\psi^*_{\mathbf{j}}}{\Delta t} &= -\frac{1}{\epsilon}\left(w_{\mathbf{j}}^*+(1-\epsilon)(\nabla_h^- \cdot \mathbf{q}^*)_{\mathbf{j}}\right), \label{eq:psi_null}\\
\frac{\mathbf{q}_{j}^{*}}{\Delta t} &= -\frac{1}{\epsilon} \left(\frac{\mathbf{q}_{\mathbf{j}}^*}{\mathcal{M}_{\mathbf{j}}^n}+(1-\epsilon)(\nabla_h^+\psi^*)_{\mathbf{j}}-r(\Delta_h\mathbf{q}^*)_{\mathbf{j}}\right),\label{eq:q_null}\\
\frac{w_{\mathbf{j}}^{*}}{\Delta t} 
&= \frac{1}{\epsilon}\left(\psi^*_{\mathbf{j}}+s(\Delta_h w^*)_{\mathbf{j}}\right),\label{eq:w_null}\\
\frac{\mathbf{p}_{\mathbf{j}}^{*}}{\Delta t} &= 0.\label{eq:p_null}
\end{align}
\end{subequations}
From \eqref{eq:h_null} and \eqref{eq:p_null} we immediately obtain
\begin{equation}
   u^* = 0, \quad \mathbf{p}^* = 0. 
\end{equation}
The equation \eqref{eq:psi_null} for $\psi^*$ becomes
\begin{equation}
\psi^*=-\frac{\Delta t}{\epsilon}\left(w^*+(1-\epsilon)\nabla_h^-\cdot \mathbf{q}^*\right).
\end{equation}
Substituting this expression into the equations for $\mathbf{q}^*$ and $w^*$, we obtain a reduced system in $(\mathbf{q}^*, w^*)$. We now take the discrete inner product of the $\mathbf{q}^*$-equation \eqref{eq:q_null} with $\mathbf{q}^*$ and sum over $j$ to obtain 
\begin{align}
\frac{1}{\Delta t}\|\mathbf{q}^*\|^2
&= -\frac{1}{\epsilon}\left(
\sum_{\mathbf{j}} \frac{|\mathbf{q}_{\mathbf{j}}^*|^2}{\mathcal{M}_{\mathbf{j}}^n}
+ (1-\epsilon)\langle \nabla_h^+ \psi^*, \mathbf{q}^* \rangle
- r \langle \Delta_h \mathbf{q}^*, \mathbf{q}^* \rangle
\right).
\end{align}
Using the SBP property from Lemma \ref{lemma:discrete_ibp}, we have
\[
-\langle \Delta_h \mathbf{q}^*,\mathbf{q}^*\rangle
=
\|\nabla_h^+\mathbf{q}^*\|^2, \quad \langle \nabla_h^+\psi^*,\mathbf{q}^*\rangle
=
-\langle \psi^*,\nabla_h^-\cdot\mathbf{q}^*\rangle.
\]
Therefore, we obtain
\begin{align}\label{eq: q_update}
\frac{1}{\Delta t}\|\mathbf{q}^*\|^2
+
\frac{1}{\epsilon}
\sum_{\mathbf{j}}
\frac{|\mathbf{q}_{\mathbf{j}}^*|^2}{\mathcal{M}_{\mathbf{j}}^n}
+
\frac{r}{\epsilon}
\|\nabla_h^+\mathbf{q}^*\|^2
=
\frac{1-\epsilon}{\epsilon}
\langle \psi^*,\nabla_h^-\cdot\mathbf{q}^*\rangle.
\end{align}
Similarly, multiplying the $w^*$-equation \eqref{eq:w_null} by $w^*$ and summing together with SBP property from Lemma \ref{lemma:discrete_ibp} yields
\begin{align}\label{w_update}
\frac{1}{\Delta t}\|w^*\|^2
+
\frac{s}{\epsilon}\|\nabla_h^+ w^*\|^2
=
\frac{1}{\epsilon}
\langle \psi^*,w^*\rangle.
\end{align}
Thus, adding the two identities \eqref{eq: q_update} and \eqref{w_update} gives
\begin{align}\label{q_update}
\frac{1}{\Delta t}\left(\|\mathbf{q}^*\|^2+\|w^*\|^2\right)+\frac{1}{\epsilon}
\sum_{\mathbf{j}}\frac{|\mathbf{q}_{\mathbf{j}}^*|^2}{\mathcal{M}_{\mathbf{j}}^n} +\frac{r}{\epsilon}\|\nabla_h^+\mathbf{q}^*\|^2+\frac{s}{\epsilon}\|\nabla_h^+w^*\|^2=\frac{1}{\epsilon}\left\langle\psi^*,w^*+(1-\epsilon)\nabla_h^-\cdot\mathbf{q}^*\right\rangle.
\end{align}
Substituting the expression for $\psi^*$ from \eqref{eq:psi_null} in \eqref{q_update}, we obtain the following equality
\begin{align}
\frac{1}{\Delta t}\left(\|\mathbf{q}^*\|^2+\|w^*\|^2+\|\psi^*\|^2\right) +\frac{1}{\epsilon}
\sum_{\mathbf{j}}\frac{|\mathbf{q}_{\mathbf{j}}^*|^2}{\mathcal{M}_{\mathbf{j}}^n} +\frac{r}{\epsilon}\|\nabla_h^+\mathbf{q}^*\|^2+\frac{s}{\epsilon}\|\nabla_h^+w^*\|^2=0.
\end{align}
Since $\Delta t>0$, $\epsilon>0$, $\mathcal{M}_{\mathbf{j}}^n>0$, and $r,s\geq 0$, all terms on the left-hand side are nonnegative. Hence,
\[
\mathbf{q}^*=0,\qquad w^*=0,\qquad  \psi^*=0.
\]
Together with $u^*=0$ and $\mathbf{p}^*=0$, this shows that the homogeneous system has only the trivial solution. Thus, the kernel of the associated linear operator is a null set. Since the system is finite-dimensional, the operator is invertible. Therefore, the predictor step \eqref{eq:semi-imp} admits a unique solution. Since the corrector step \eqref{eq:expl} is explicit once $\mathbf{U}^*$ is known, the full scheme \eqref{eq:semi-imp}-\eqref{eq:expl} also admits a unique numerical solution.
\end{proof}

\subsection{Asymptotic preserving property of the numerical scheme \eqref{eq:semi-imp}-\eqref{eq:expl}}
\label{sec:AP}

In this section, we formally derive the relaxation limit of the scheme \eqref{eq:semi-imp}-\eqref{eq:expl}. We show that the limit $\epsilon\to 0$ of the numerical scheme \eqref{eq:semi-imp}-\eqref{eq:expl} is a consistent numerical scheme for the limit equation \eqref{eq: main}.

\begin{theorem}[AP property]
Assume that discrete solutions $\{\mathbf{U}_{\mathbf{j}}^n = (u_{\mathbf{j}}^n, \psi_{\mathbf{j}}^n, \mathbf{q}_{\mathbf{j}}^n, w_{\mathbf{j}}^n, \mathbf{p}_{\mathbf{j}}^n)\}_{\mathbf{j},n}$ obtained from the numerical scheme \eqref{eq:semi-imp}-\eqref{eq:expl} has the following asymptotic expansion
\begin{equation}
\label{eq:asymp-exp}
\mathbf{U}_{\mathbf{j}}^n = \mathbf{U}_{\mathbf{j}}^{n,(0)} + \epsilon \mathbf{U}_{\mathbf{j}}^{n,(1)} + \epsilon^2 \mathbf{U}_{\mathbf{j}}^{n,(2)} + \cdots,
\end{equation}
where
\begin{equation}
    \mathbf{U}_{\mathbf{j}}^{n,(k)} = \big(u_{\mathbf{j}}^{n,(k)}, \psi_{\mathbf{j}}^{n,(k)}, \mathbf{q}_{\mathbf{j}}^{n,(k)}, w_{\mathbf{j}}^{n,(k)}, \mathbf{p}_{\mathbf{j}}^{n,(k)}\big),
\quad k=0,1,2,\dots,
\end{equation}
for all $\mathbf{j}, n$.

Furthermore, assume that the numerical solutions at the $n$-th step, $\mathbf{U}_{\mathbf{j}}^n = (u_{\mathbf{j}}^n, \psi_{\mathbf{j}}^n, \mathbf{q}_{\mathbf{j}}^n, w_{\mathbf{j}}^n, \mathbf{p}_{\mathbf{j}}^n)$, are well-prepared in the sense of Definition \eqref{def:well-prepared_init}. Then, the numerical scheme \eqref{eq:semi-imp}-\eqref{eq:expl} becomes a consistent numerical approximation of \eqref{eq: main} in the relaxation limit (as $\epsilon\to 0$).
\end{theorem}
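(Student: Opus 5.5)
We will insert the expansion \eqref{eq:asymp-exp}, applied to $\mathbf U^n$, $\mathbf U^*$ and $\mathbf U^{n+1}$, into the predictor \eqref{eq:semi-imp} and the corrector \eqref{eq:expl}, and then match powers of $\epsilon$. Here $\Delta t,\Delta x$ are fixed and independent of $\epsilon$. The viscosity coefficients are $r=c_\mathbf{q}\Delta x$, $s=c_w\Delta x$ and $r_2=c_\psi\Delta t$, which are $O(1)$, while $r_1=c_u\epsilon\Delta t$ and $r_3=c_\mathbf{p}\epsilon d\gamma\Delta t$ are $O(\epsilon)$ and therefore vanish at leading order.

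\textbf{Step 1: leading order of the predictor.} Multiply \eqref{eq:psi_updt_star}, \eqref{eq:q_updt_star} and \eqref{eq:w_updt_star} by $\epsilon$ and take the $O(1)$ terms. Using $u^*=u^n$, $\mathbf p^*=\mathbf p^n$ and $\mathcal M^n_{\mathbf j}=\mathcal M(u^{n,(0)}_{\mathbf j})+O(\epsilon)$, this gives
\begin{align*}
w^{*,(0)}+\nabla_h^-\cdot\mathbf q^{*,(0)}&=0,\\
\frac{\mathbf q^{*,(0)}}{\mathcal M(u^{n,(0)})}+\nabla_h^+\bigl(\Pi(u^{n,(0)})+\psi^{*,(0)}\bigr)-r\Delta_h\mathbf q^{*,(0)}&=0,\\
\psi^{*,(0)}+\gamma\nabla_h^-\cdot\mathbf p^{n,(0)}+s\Delta_h w^{*,(0)}&=0.
\end{align*}
This is a linear elliptic system that fixes $(\psi^{*,(0)},\mathbf q^{*,(0)},w^{*,(0)})$ from $u^{n,(0)}$ and $\mathbf p^{n,(0)}$. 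It is uniquely solvable by the same energy argument as in Lemma \ref{lemma: num_existence}, now without the $1/\Delta t$ terms: the positivity of $1/\mathcal M$ still forces the trivial kernel.

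\textbf{Step 2: leading order of the corrector and identification of $\mathbf p$.} Since $r_1,r_3=O(\epsilon)$, the zeroth-order parts of \eqref{eq:h_updt} and \eqref{eq:p_updt} read
\[
\frac{u^{n+1,(0)}-u^{n,(0)}}{\Delta t}+\nabla_h^-\cdot\mathbf q^{*,(0)}=0,\qquad
\mathbf p^{n+1,(0)}=\mathbf p^{n,(0)}+\Delta t\,\nabla_h^+w^{*,(0)}.
\]
Combining these with $w^{*,(0)}=-\nabla_h^-\cdot\mathbf q^{*,(0)}$ from Step 1 gives
\[
\mathbf p^{n+1,(0)}-\nabla_h^+u^{n+1,(0)}=\mathbf p^{n,(0)}-\nabla_h^+u^{n,(0)}.
\]
The discrete well-preparedness assumption states $\mathbf p^{n,(0)}=\nabla_h^+u^{n,(0)}$, so by induction $\mathbf p^{n+1,(0)}=\nabla_h^+u^{n+1,(0)}$ as well. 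Consequently $\gamma\nabla_h^-\cdot\mathbf p^{n,(0)}=\gamma\Delta_h u^{n,(0)}$.

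\textbf{Step 3: the limit scheme and its consistency.} Substituting into Step 1, the limit scheme becomes
\[
\psi^{*,(0)}=-\gamma\Delta_h u^{n,(0)}-s\Delta_h w^{*,(0)},\qquad
\mathbf q^{*,(0)}=-\mathcal M(u^{n,(0)})\bigl[\nabla_h^+(\Pi(u^{n,(0)})+\psi^{*,(0)})-r\Delta_h\mathbf q^{*,(0)}\bigr],
\]
with $u^{n+1,(0)}=u^{n,(0)}-\Delta t\,\nabla_h^-\cdot\mathbf q^{*,(0)}$. Because $r,s=O(\Delta x)$, the terms $s\Delta_h w^{*,(0)}$ and $r\Delta_h\mathbf q^{*,(0)}$ are $O(\Delta x)$ perturbations. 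Up to these terms, the scheme is a linearly implicit discretization of
\[
u_t+\nabla\cdot\bigl(\mathcal M(u)\nabla(\gamma\Delta u-\Pi(u))\bigr)=0,
\]
which is \eqref{eq: main}. Its truncation error is $O(\Delta t+\Delta x)$ for smooth $u$, which establishes the claimed consistency.

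\textbf{Main obstacle.} The hardest point is that $w^{*,(0)}$ and $\mathbf q^{*,(0)}$ are defined only implicitly through the coupled system of Step 1. Showing that the viscous corrections are genuinely consistent requires $\Delta_h w^{*,(0)}$ and $\Delta_h\mathbf q^{*,(0)}$ to stay bounded as $\Delta x\to 0$. I would handle this by a formal Taylor argument for smooth data, in which $w^{*,(0)}\approx u_t$ and $\mathbf q^{*,(0)}$ approximates the smooth flux, so both discrete Laplacians are $O(1)$. A second point needing care is that the corrector contributions to $\psi,\mathbf q,w$ do not affect the $u$-update at leading order. The next step's predictor re-projects these variables at $O(1)$, so only $u^{n,(0)}$ and $\mathbf p^{n,(0)}$ propagate; this is exactly why well-preparedness is needed only for the $\mathbf p$-relation.
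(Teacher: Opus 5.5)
Your proposal is correct and follows essentially the same formal asymptotic-expansion argument as the paper. Both derive the leading-order constraints $w^{*}+\nabla_h^-\cdot\mathbf q^{*}=0$ together with the $\mathbf q^*$- and $\psi^*$-relations, eliminate $\psi^*$ and $w^*$ to obtain the limiting flux up to $\mathcal O(\Delta x)$ viscous terms, and use well-preparedness to identify $\mathbf p$ with $\nabla_h^+u$. Your differences are refinements: you expand predictor and corrector separately instead of summing them first, and you use $r_1,r_3=\mathcal O(\epsilon)$ to get the exact invariant $\mathbf p^{n+1,(0)}-\nabla_h^+u^{n+1,(0)}=\mathbf p^{n,(0)}-\nabla_h^+u^{n,(0)}$ where the paper keeps those terms and settles for an $\mathcal O(\Delta t\Delta x)+\mathcal O(\epsilon)$ discrepancy; you also justify the invertibility of the limiting elliptic system that the paper simply inverts.
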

\begin{proof}
We start by adding \eqref{eq:semi-imp} and \eqref{eq:expl} and obtain
\begin{subequations}
\label{eq:scheme_tog}
\begin{align}
&\frac{u_{\mathbf{j}}^{n+1} - u_{\mathbf{j}}^{n}}{\Delta t} + (\nabla_h^- \cdot\mathbf{q}^*)_{\mathbf{j}} = r_1(\Delta_{h} \Pi^*)_{\mathbf{j}}, \\
&\frac{\psi_{\mathbf{j}}^{n+1} - \psi_{\mathbf{j}}^n}{\Delta t} 
= -\frac{1}{\epsilon}
\left(
w^*_{\mathbf{j}}+(\nabla_h^- \cdot \mathbf{q}^{*})_{\mathbf{j}}\right)+r_2(\Delta_h \psi^*)_{\mathbf{j}},\\
&\frac{\mathbf{q}_{j}^{n+1}-\mathbf{q}_{j}^{n}}{\Delta t} 
= -\frac{1}{\epsilon}
\left(
\frac{\mathbf{q}_{j}^{*}}{\mathcal{M}_{\mathbf{j}}^n}
+\nabla_h^+ \bigl(\Pi^*+\psi^{*}\bigr)_{\mathbf{j}}
-r(\Delta_{h} \mathbf{q}^{*})_{\mathbf{j}}
\right),\\
&\frac{w_{\mathbf{j}}^{n+1} - w_{\mathbf{j}}^{\,n}}{\Delta t} 
= \frac{1}{\epsilon}
\left(
\psi^*_{\mathbf{j}}
+\gamma(\nabla_h^- \cdot \mathbf{p}^{*})_{\mathbf{j}}
+s(\Delta_{h}w^{*})_{\mathbf{j}}
\right),\\
&\frac{\mathbf{p}_{\mathbf{j}}^{\,n+1} - \mathbf{p}_{\mathbf{j}}^{n}}{\Delta t} - (\nabla_h^+ w^{*})_{\mathbf{j}}= r_3(\Delta_h \mathbf{p}^*)_{\mathbf{j}}.
\end{align}
\end{subequations}
Then substituting the asymptotic expansion \eqref{eq:asymp-exp} into the numerical scheme \eqref{eq:scheme_tog} yields
\begin{subequations}
\label{eq:limit_scheme_tog}
\begin{align}
&\frac{u_{\mathbf{j}}^{n+1,(0)} - u_{\mathbf{j}}^{n,(0)}}{\Delta t} + (\nabla_h^- \cdot\mathbf{q}^*)_{\mathbf{j}} = r_1(\Delta_{h} \Pi^*)_{\mathbf{j}}+\mathcal{O}(\epsilon),\label{eq:h_updt_tog}\\
&\mathcal{O}(\epsilon) 
=
w^*_{\mathbf{j}}+(\nabla_h^- \cdot \mathbf{q}^{*})_{\mathbf{j}},\label{eq:psi_updt_tog}\\
&\mathcal{O}(\epsilon) 
= 
\frac{\mathbf{q}_{j}^{*}}{\mathcal{M}_{\mathbf{j}}^{n,(0)}}
+\nabla_h^+ \bigl(\Pi^{n,(0)}+\psi^{*}\bigr)_{\mathbf{j}}
-r(\Delta_{h} \mathbf{q}^{*})_{\mathbf{j}},\label{eq:q_updt_tog}\\
&\mathcal{O}(\epsilon)=
\psi^*_{\mathbf{j}}
+\gamma(\nabla_h^- \cdot \mathbf{p}^{n,(0)})_{\mathbf{j}}
+s(\Delta_{h}w^{*})_{\mathbf{j}},\label{eq:w_updt_tog}\\
&\frac{\mathbf{p}_{\mathbf{j}}^{\,n+1,(0)} - \mathbf{p}_{\mathbf{j}}^{n,(0)}}{\Delta t} - (\nabla_h^+ w^{*})_{\mathbf{j}}= r_3(\Delta_h \mathbf{p}^{n,(0)})_{\mathbf{j}}+\mathcal{O}(\epsilon).\label{eq:p_updt_tog}
\end{align}
\end{subequations}
Using \eqref{eq:psi_updt_tog}, \eqref{eq:q_updt_tog} and \eqref{eq:w_updt_tog}, we also have
\begin{equation}
\label{eq:q_updt_tog-lim}
\frac{\mathbf{q}_{j}^{*}}{\mathcal{M}_{\mathbf{j}}^{n,(0)}}= 
-\nabla_h^+ \bigl(\Pi^{n,(0)}-\gamma(\nabla_h^- \cdot \mathbf{p}^{n,(0)})
-s(\Delta_{h}w^{*})\bigr)_{\mathbf{j}}+r(\Delta_{h} \mathbf{q}^{*})_{\mathbf{j}}+\mathcal{O}(\epsilon).
\end{equation}
In what follows, we show that the scheme \eqref{eq:limit_scheme_tog} is indeed an approximation of the limit equation \eqref{eq: main}.

To be more precise we write $\mathbf{U}^*_\mathbf{j}=\mathbf{U}^{*,n}_\mathbf{j}$ for all $\mathbf{j}$ at this stage and with this the scheme \eqref{eq:limit_scheme_tog} reduces to
\begin{subequations}
\label{eq:limit_scheme_tog1}
\begin{align}
&\frac{u_{\mathbf{j}}^{n+1,(0)} - u_{\mathbf{j}}^{n,(0)}}{\Delta t} + (\nabla_h^- \cdot\mathbf{q}^{*,n})_{\mathbf{j}} = r_1(\Delta_{h} \Pi^{*,n})_{\mathbf{j}}+\mathcal{O}(\epsilon),\label{eq:h_updt_tog1}\\
&\frac{\mathbf{p}_{\mathbf{j}}^{\,n+1,(0)} - \mathbf{p}_{\mathbf{j}}^{n,(0)}}{\Delta t} - \bigg(\nabla_h^+ \frac{u^{n+1,(0)} - u^{n,(0)}}{\Delta t}\bigg)_{\mathbf{j}}= r_3(\Delta_h \mathbf{p}^{n,(0)})_{\mathbf{j}}-r_1(\nabla_h^+(\Delta_{h} \Pi^{*,n}))_{\mathbf{j}}+\mathcal{O}(\epsilon),\label{eq:p_updt_tog1}
\end{align}
\end{subequations}
and using \eqref{eq:psi_updt_tog} and \eqref{eq:q_updt_tog-lim}, we also have
\begin{equation}
\label{eq:q_updt_tog-lim1}
\frac{\mathbf{q}_{j}^{{*},n}}{\mathcal{M}_{\mathbf{j}}^{n,(0)}}= 
-\nabla_h^+ \bigl(\Pi^{n,(0)}-\gamma(\nabla_h^- \cdot \mathbf{p}^{n,(0)})
+s(\Delta_{h}\nabla_h^- \cdot \mathbf{q}^{*,n})\bigr)_{\mathbf{j}}+r(\Delta_{h} \mathbf{q}^{*,n})_{\mathbf{j}}+\mathcal{O}(\epsilon).
\end{equation}

Assuming all the higher derivatives of the discrete unknowns remain $\mathcal{O}(1)$ as in \cite{anandan2024high, anandan2025asymptotic}, one further obtains from \eqref{eq:q_updt_tog-lim1} the identity.
\begin{equation}
\begin{aligned}
\label{eq:q_updt_tog-lim2}
\mathbf{q}^{{*},n}=(I-r\mathcal{M}^{n,(0)}\Delta_h+s\mathcal{M}^{n,(0)}\nabla_h^+\Delta_{h}\nabla_h^-\cdot)^{-1}\{ 
-\mathcal{M}^{n,(0)}\nabla_h^+ \big(\Pi^{n,(0)}-\gamma(\nabla_h^- \cdot \mathbf{p}^{n,(0)})\big)\}+\mathcal{O}(\epsilon),\\
=-\mathcal{M}^{n,(0)}\nabla_h^+ \big(\Pi^{n,(0)}-\gamma(\nabla_h^- \cdot \mathbf{p}^{n,(0)})\big) + \mathcal{O}(\Delta x) + \mathcal{O}(\epsilon).
\end{aligned}
\end{equation}
Thanks to well-prepared initial data (see Definition \ref{def:well-prepared_init}), from \eqref{eq:p_updt_tog1} we get
\begin{equation}
\label{eq:q_updt_tog-lim_p_grdu}
    \mathbf{p}_{\mathbf{j}}^{\,n+1,(0)} - (\nabla_h^+ u^{n+1,(0)} )_{\mathbf{j}}= \mathcal{O}(\Delta t\Delta x)+\mathcal{O}(\epsilon),
\end{equation}
for all $n$ and $\mathbf{j}$.

It is thus straightforward to observe that \eqref{eq:h_updt_tog1} together with \eqref{eq:q_updt_tog-lim2} and \eqref{eq:q_updt_tog-lim_p_grdu} is a consistent numerical scheme for \eqref{eq: main} in the asymptotic limit.
\end{proof}

\subsection{Energy stability of the numerical scheme \eqref{eq:semi-imp}-\eqref{eq:expl}}
In this section, we prove that the numerical scheme as defined by \eqref{eq:semi-imp}-\eqref{eq:expl} satisfies a discrete energy identity under a CFL-type condition. We first define the discrete energy at the $n$-th timestep as follows.
\begin{equation}\label{eq:eng_id}
\begin{aligned}
E^n=\sum_{\mathbf{j}}\Delta x^d E^n_{\mathbf{j}}=\sum_{\mathbf{j}}\Delta x^d\bigg(W(u_{\mathbf{j}}^n)+\dfrac{\epsilon{|\psi_{\mathbf{j}}^n|}^2}{2}+\dfrac{\epsilon {|\mathbf{q}_{\mathbf{j}}^n|}^2}{2}+\dfrac{\epsilon {|w_{\mathbf{j}}^n|}^2}{2}+\dfrac{\gamma {|\mathbf{p}_{\mathbf{j}}^n|}^2}{2}\bigg),\\
:=\sum_{\mathbf{j}}\Delta x^d\big(E_{\mathbf{j}}^n(u)+E_{\mathbf{j}}^n(\psi)+E_{\mathbf{j}}^n(\mathbf{q})+E_{\mathbf{j}}^n(w)+E_{\mathbf{j}}^n(\mathbf{p})\big).
\end{aligned}
\end{equation}
Note that $W$ is a convex differentiable function such that $W'(u)=\Pi(u)$ for all $u>0$. Also, $\mathcal{M}(u)>0$ for all $u>0$. In view of the explicit definition of the discrete energy, we now prove the following theorem.

\begin{theorem}[Discrete energy stability]
\label{thm:eng-stable}
Let $\mathbf{U}^{n+1}$ be the numerical approximation generated by the scheme
\eqref{eq:semi-imp}-\eqref{eq:expl} and $\mathcal{M}_{\mathbf{j}}^n>0$ for all $j\in \mathbb{Z}$ and $n\geq 0$. Then, under the CFL condition
\begin{equation}
\label{eq:cfl_stability}
\max\left\{
4\sqrt{d}\sqrt{W''(u_{\mathbf{j}}^{n+\frac12})},
4d\sqrt{d},
d\big(\epsilon+W''(u_{\mathbf{j}}^{n+\frac12})\big),
\gamma,
4\sqrt{d\gamma}
\right\}
\frac{\Delta t}{\Delta x}
\leq 1-\theta,
\end{equation}
the following discrete energy inequality holds
\begin{equation}
    \label{eq:disc_energy-ineq}
    \frac{E^{n+1}-E^{n}}{\Delta t}
    +\frac{\theta}{r}\sum_{\mathbf{j}}\Delta x^d|r\nabla_h^{+}\mathbf{q}^*|_{\mathbf{j}}^2+\frac{\theta}{s}\sum_{\mathbf{j}}\Delta x^d|s\nabla_h^+ w^*|_{\mathbf{j}}^2+
    \sum_{\mathbf{j}}\Delta x^d\,
    \frac{|\mathbf{q}_{\mathbf{j}}^*|^2}{\mathcal{M}_{\mathbf{j}}^n}
    \leq 0.
\end{equation}
In \eqref{eq:cfl_stability}, $u^{n+\half}_{\mathbf{j}}\in\llbracket u_{\mathbf{j}}^n, u_{\mathbf{j}}^{n+1}\rrbracket$ and $0<\theta<1$.
\end{theorem}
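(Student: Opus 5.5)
The proof splits the energy change across the two stages, $E^{n+1}-E^n=(E^*-E^n)+(E^{n+1}-E^*)$, where $E^*$ is the discrete energy evaluated at the predictor state $\mathbf{U}^*$. The implicit stage dissipates energy unconditionally and leaves its dissipation in reserve. The explicit stage produces positive quadratic remainders $\tfrac{\Delta t^2}{2}|\cdot|^2$, and the CFL condition \eqref{eq:cfl_stability} together with the viscosities $r,s,r_1,r_2,r_3$ must absorb them.

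\textbf{Implicit step.} Since $u^*=u^n$ and $\mathbf p^*=\mathbf p^n$, only the $\psi,\mathbf q,w$ contributions change. I will multiply \eqref{eq:psi_updt_star} by $\epsilon\psi^*$, \eqref{eq:q_updt_star} by $\epsilon\mathbf q^*$ and \eqref{eq:w_updt_star} by $\epsilon w^*$, sum over $\mathbf j$, and use $a(a-b)=\tfrac12(a^2-b^2)+\tfrac12(a-b)^2$ together with Lemma~\ref{lemma:discrete_ibp}. The $\psi^* w^*$ couplings cancel, and so do the $(1-\epsilon)\langle\psi^*,\nabla_h^-\cdot\mathbf q^*\rangle$ couplings. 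The step then yields
\[
\frac{E^*-E^n}{\Delta t}+\sum_{\mathbf j}\Delta x^d\frac{|\mathbf q^*_{\mathbf j}|^2}{\mathcal M^n_{\mathbf j}}+r\|\nabla_h^+\mathbf q^*\|^2+s\|\nabla_h^+w^*\|^2\le (1-\epsilon)\langle \Pi^*,\nabla_h^-\cdot\mathbf q^*\rangle+\gamma(1-\epsilon)\langle\nabla_h^-\cdot\mathbf p^*,w^*\rangle,
\]
with nonnegative increment terms $\frac{\epsilon}{2\Delta t}\|\psi^*-\psi^n\|^2$ and similar ones available on the left. The two leftover cross terms, with weight $(1-\epsilon)$, will be cancelled by the matching weight-$1$ terms of the explicit stage; the residual $\epsilon$-weighted pieces must be handled there as well.

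\textbf{Explicit step.} Here I test \eqref{eq:h_updt} with $\Pi^*=W'(u^*)$, using the Taylor expansion $W(u^{n+1})-W(u^*)=\Pi^*(u^{n+1}-u^*)+\tfrac12W''(u^{n+\frac12})(u^{n+1}-u^*)^2$; this is where $u^{n+\frac12}$ enters. I test \eqref{eq:psi_updt} with $\epsilon\psi^*$, \eqref{eq:q_updt} with $\epsilon\mathbf q^*$, \eqref{eq:w_updt} with $\epsilon w^*$, and \eqref{eq:p_updt} with $\gamma\mathbf p^*$. The identity $b(a-b)=\tfrac12(a^2-b^2)-\tfrac12(a-b)^2$ then produces positive squared increments, for instance $\frac{\epsilon\Delta t}{2}\|\nabla_h^+(\Pi^*+\psi^*)\|^2$ and $\frac{\gamma\Delta t}{2}\|\nabla_h^+w^*+r_3\Delta_h\mathbf p^*\|^2$. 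After summation by parts, the transport couplings either cancel or match the implicit leftovers. The viscosities supply the negative terms $-r_1\|\nabla_h^+\Pi^*\|^2$, $-\epsilon r_2\|\nabla_h^+\psi^*\|^2$ and $-\gamma r_3\|\nabla_h^+\mathbf p^*\|^2$.

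\textbf{Absorbing the remainders.} Each squared increment is bounded using $(a+b)^2\le2a^2+2b^2$, the divergence estimate \eqref{eq:div_less_grad}, and the inverse estimate of Lemma~\ref{lemma:discrete_inverse} (with $C_d=4d$). Terms such as $\Delta t^2W''\|\nabla_h^-\cdot\mathbf q^*\|^2\le d\,\Delta t^2W''\|\nabla_h^+\mathbf q^*\|^2$ are absorbed by $\theta r\|\nabla_h^+\mathbf q^*\|^2$ with $r=\Delta x$ once $dW''\Delta t^2/\Delta x\lesssim(1-\theta)$. The $w^*$-gradient terms from the $\mathbf p$-update are absorbed by $s\|\nabla_h^+w^*\|^2$ under $\gamma\Delta t/\Delta x\le1-\theta$. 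The gradient terms in $\Pi^*$, $\psi^*$ and $\mathbf p^*$ are absorbed by the $r_1,r_2,r_3$ viscosities, using $c_u=c_\psi=c_{\mathbf p}=2$ and the inverse estimate for the $\Delta_h$ terms; this yields the $4\sqrt d$, $4d\sqrt d$ and $4\sqrt{d\gamma}$ entries of \eqref{eq:cfl_stability}. Only a fraction $1-\theta$ of the $r,s$ dissipation is spent, and the remaining fraction $\theta$ gives \eqref{eq:disc_energy-ineq}.

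\textbf{Main obstacle.} The delicate step is the bookkeeping of the $(1-\epsilon)$ versus $1$ coefficients. The implicit stage leaves cross terms of weight $(1-\epsilon)$, while the explicit stage contributes weight-$1$ transport terms, which are multiplied by $\epsilon$ in the $\psi,\mathbf q,w$ tests. Matching them requires checking that the residual $\epsilon\langle\Pi^*,\nabla_h^-\cdot\mathbf q^*\rangle$-type terms, and the $u$-equation term $-\langle\Pi^*,\nabla_h^-\cdot\mathbf q^*\rangle$, combine exactly with the $\epsilon\langle\mathbf q^*,\nabla_h^+\Pi^*\rangle$ term from the $\mathbf q$ test. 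Any imbalance must be bounded by Young's inequality against the $r_1$ or $r$ viscosities, which would explain the $d(\epsilon+W'')$ entry of the CFL condition. I would first write all cross terms in a single table and verify cancellation before estimating anything.
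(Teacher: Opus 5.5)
Your proposal is correct and is essentially the paper's proof. The paper packages the same star-state splitting, multipliers, Taylor expansion of $W$ and summation-by-parts pairing into the local energy identity of Lemma~\ref{lemma: local_energy_id}, then bounds the squared increments as you plan with $r=s=\Delta x$, $r_1=2\epsilon\Delta t$, $r_2=2\Delta t$, $r_3=2\epsilon d\gamma\Delta t$. The bookkeeping you flag as the main obstacle closes exactly, because each coupling's implicit weight $1-\epsilon$ and explicit weight $\epsilon$ add up to $1$, and the resulting pairs, together with $-\langle \Pi^*,\nabla_h^-\cdot\mathbf q^*\rangle_\Delta$ from the $u$-equation, cancel by Lemma~\ref{lemma:discrete_ibp}; so no Young's inequality is needed, and the $\epsilon$ in the entry $d(\epsilon+W'')$ comes from the $\epsilon\Delta t\,\|\nabla_h^-\cdot\mathbf q^*\|_{2,\Delta}^2$ part of the squared $\psi$-increment rather than from any imbalance.
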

To prove Theorem \ref{thm:eng-stable}, we first establish the following local energy identity.
\begin{lemma}[Local energy identity]\label{lemma: local_energy_id}
Any solution of the scheme \eqref{eq:semi-imp}-\eqref{eq:expl} satisfies the identity
\begin{equation}\label{eq: local_energy_id}
\begin{aligned}
\frac{E_{\mathbf{j}}^{n+1}-E_{\mathbf{j}}^{n}}{\Delta t}
+(\nabla_h^{-} \cdot \mathbf{q}^*)_{\mathbf{j}}(\Pi^n_{\mathbf{j}}+\psi^*_{\mathbf{j}})
+(\nabla_h^+(\Pi^n+\psi^*))_{\mathbf{j}}\cdot \mathbf{q}^*_{\mathbf{j}} \\-\gamma(\nabla_h^{-} \cdot \mathbf{p}^*)_{\mathbf{j}}w^*_{\mathbf{j}}-\gamma(\nabla_h^+ w^*)_{\mathbf{j}}\cdot \mathbf{p}^*_{\mathbf{j}}=-\frac{|\mathbf{q}_{j}^{*}|^2}{\mathcal{M}_{\mathbf{j}}^n}+R_{\mathbf{j}}^{n+1},
\end{aligned}
\end{equation}
where $R_{\mathbf{j}}^{n+1}$ is given by
\begin{equation}\label{eq:eng_remainder}
\begin{aligned}
R_{\mathbf{j}}^{n+1}
&=
\frac{(u_{\mathbf{j}}^{n+1}-u_{\mathbf{j}}^n)^2}{2 \Delta t}
W''(u_{\mathbf{j}}^{n+\frac12})
+\epsilon\frac{(\psi_{\mathbf{j}}^{n+1}-\psi^*_{\mathbf{j}})^2}{2\Delta t}
-\epsilon\frac{(\psi^*_{\mathbf{j}}-\psi^n_{\mathbf{j}})^2}{2\Delta t} \\
&+\epsilon\frac{|\mathbf{q}_{\mathbf{j}}^{n+1}-\mathbf{q}_{\mathbf{j}}^{*}|^2}{2\Delta t}
-\epsilon\frac{|\mathbf{q}_{\mathbf{j}}^{*}-\mathbf{q}_{\mathbf{j}}^{n}|^2}{2\Delta t}
+\epsilon\frac{(w_{\mathbf{j}}^{n+1}-w_{\mathbf{j}}^{*})^2}{2\Delta t}
-\epsilon\frac{(w_{\mathbf{j}}^{*}-w_{\mathbf{j}}^{n})^2}{2\Delta t} \\
&+\gamma\frac{|\mathbf{p}_{\mathbf{j}}^{n+1}-\mathbf{p}_{\mathbf{j}}^{*}|^2}{2\Delta t}
+r_1(\Delta_h \Pi^*)_{\mathbf{j}}\Pi^*_{\mathbf{j}}
+\epsilon r_2(\Delta_h \psi^*)_{\mathbf{j}}\psi^*_{\mathbf{j}}
+\gamma r_3(\Delta_h \mathbf{p}^*)_{\mathbf{j}}\cdot \mathbf{p}^*_{\mathbf{j}} \\
&+r(\Delta_{h} \mathbf{q}^*)_{\mathbf{j}}\cdot \mathbf{q}^*_{\mathbf{j}}
+s(\Delta_{h}w^*)_{\mathbf{j}}w^*_{\mathbf{j}} .
\end{aligned}
\end{equation}
\end{lemma}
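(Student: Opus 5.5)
The plan is to prove \eqref{eq: local_energy_id} by a purely algebraic, pointwise computation at the fixed node $\mathbf{j}$. I will split every quadratic energy increment through the intermediate state and then test each stage of the scheme against the corresponding predictor value. For any scalar or vector quantity $x$ I will use the two elementary identities
\begin{align*}
\tfrac12|x^{n+1}|^2-\tfrac12|x^*|^2&=x^*\cdot(x^{n+1}-x^*)+\tfrac12|x^{n+1}-x^*|^2,\\
\tfrac12|x^{*}|^2-\tfrac12|x^n|^2&=x^*\cdot(x^{*}-x^n)-\tfrac12|x^{*}-x^n|^2 .
\end{align*}
Multiplying these by $\epsilon$ (for $\psi,\mathbf q,w$) or by $\gamma$ (for $\mathbf p$) and dividing by $\Delta t$ produces exactly the squared-increment terms in \eqref{eq:eng_remainder}: plus signs for the corrector increments and minus signs for the predictor increments. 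For $\mathbf p$ the predictor increment vanishes, since $\mathbf p^*=\mathbf p^n$ by \eqref{eq:p_updt_star}. It then remains to replace the linear terms $x^*\cdot(x^{n+1}-x^*)/\Delta t$ and $x^*\cdot(x^*-x^n)/\Delta t$ by the right-hand sides of \eqref{eq:expl} and \eqref{eq:semi-imp}.

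\textbf{Potential part.} By Taylor's theorem with mean-value remainder,
\[
W(u^{n+1}_{\mathbf j})-W(u^n_{\mathbf j})=\Pi^n_{\mathbf j}(u^{n+1}_{\mathbf j}-u^n_{\mathbf j})+\tfrac12 W''(u^{n+\frac12}_{\mathbf j})(u^{n+1}_{\mathbf j}-u^n_{\mathbf j})^2,
\]
for some $u^{n+\frac12}_{\mathbf j}$ between $u^n_{\mathbf j}$ and $u^{n+1}_{\mathbf j}$. Since $u^*=u^n$ by \eqref{eq:h_updt_star}, we have $\Pi^*=\Pi^n$. Inserting \eqref{eq:h_updt} then gives $-(\nabla_h^-\cdot\mathbf q^*)_{\mathbf j}\Pi^n_{\mathbf j}+r_1(\Delta_h\Pi^*)_{\mathbf j}\Pi^*_{\mathbf j}$ plus the $W''$ term.

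\textbf{$\psi$ and $\mathbf q$ parts.} Testing \eqref{eq:psi_updt_star} and \eqref{eq:psi_updt} with $\epsilon\psi^*_{\mathbf j}$ and adding, the $(1-\epsilon)$ and $\epsilon$ flux contributions combine into
\[
-\psi^*_{\mathbf j}w^*_{\mathbf j}-\psi^*_{\mathbf j}(\nabla_h^-\cdot\mathbf q^*)_{\mathbf j}+\epsilon r_2(\Delta_h\psi^*)_{\mathbf j}\psi^*_{\mathbf j}.
\]
Similarly, testing \eqref{eq:q_updt_star} and \eqref{eq:q_updt} with $\epsilon\mathbf q^*_{\mathbf j}$ yields
\[
-|\mathbf q^*_{\mathbf j}|^2/\mathcal M^n_{\mathbf j}-\nabla_h^+(\Pi^*+\psi^*)_{\mathbf j}\cdot\mathbf q^*_{\mathbf j}+r(\Delta_h\mathbf q^*)_{\mathbf j}\cdot\mathbf q^*_{\mathbf j}.
\]

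\textbf{$w$ and $\mathbf p$ parts.} Testing \eqref{eq:w_updt_star} and \eqref{eq:w_updt} with $\epsilon w^*_{\mathbf j}$ gives
\[
\psi^*_{\mathbf j}w^*_{\mathbf j}+\gamma(\nabla_h^-\cdot\mathbf p^*)_{\mathbf j}w^*_{\mathbf j}+s(\Delta_h w^*)_{\mathbf j}w^*_{\mathbf j}.
\]
The coupling term $\psi^*w^*$ cancels the one coming from the $\psi$ equation. Testing \eqref{eq:p_updt} with $\gamma\mathbf p^*_{\mathbf j}$ gives
\[
\gamma(\nabla_h^+w^*)_{\mathbf j}\cdot\mathbf p^*_{\mathbf j}+\gamma r_3(\Delta_h\mathbf p^*)_{\mathbf j}\cdot\mathbf p^*_{\mathbf j}.
\]

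\textbf{Collecting terms.} Moving the four flux products to the left-hand side gives precisely \eqref{eq: local_energy_id}, with $R^{n+1}_{\mathbf j}$ as in \eqref{eq:eng_remainder}. There is no genuine analytic obstacle, because the identity is local and uses no summation by parts. The only delicate step is the bookkeeping: one must check that the $(1-\epsilon)$-weighted implicit fluxes and the $\epsilon$-weighted explicit fluxes (the latter coming from the $1/\epsilon$-free corrector equations after multiplying by $\epsilon$) sum to full unit coefficients, that $\Pi^*=\Pi^n$ is used to write the flux terms with $\Pi^n$, and that the predictor squared-increment terms carry the negative sign.
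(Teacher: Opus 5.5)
Your proposal is correct and follows essentially the same route as the paper. You split each quadratic energy increment through the predictor state with the two elementary square identities, expand $W$ by Taylor's formula using $\Pi^*=\Pi^n$, substitute the predictor and corrector equations weighted by $\epsilon$ or $\gamma$ (so the $(1-\epsilon)$ and $\epsilon$ flux contributions sum to unit coefficients, and the predictor increment for $\mathbf p$ vanishes), and cancel the $\psi^*_{\mathbf j}w^*_{\mathbf j}$ coupling terms.
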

\begin{proof}
We rewrite the term $\frac{E_{\mathbf{j}}^{n+1}-E_{\mathbf{j}}^{n}}{\Delta t}$ as
\begin{multline}
\label{eq:eng-split-proof}
\frac{E_{\mathbf{j}}^{n+1}-E_{\mathbf{j}}^{n}}{\Delta t}
=\frac{E_{\mathbf{j}}^{n+1}(u)-E_{\mathbf{j}}^{n}(u)}{\Delta t}+\frac{E_{\mathbf{j}}^{n+1}(\psi)-E_{\mathbf{j}}^{n}(\psi)}{\Delta t}\\
+\frac{E_{\mathbf{j}}^{n+1}(\mathbf{q})-E_{\mathbf{j}}^{n}(\mathbf{q})}{\Delta t}+\frac{E_{\mathbf{j}}^{n+1}(w)-E_{\mathbf{j}}^{n}(w)}{\Delta t}+\frac{E_{\mathbf{j}}^{n+1}(\mathbf{p})-E_{\mathbf{j}}^{n}(\mathbf{p})}{\Delta t}.
\end{multline}
We first consider the first term in \eqref{eq:eng-split-proof}. To this end, we apply the Taylor series formulae and obtain
\begin{align*}
\frac{E_{\mathbf{j}}^{n+1}(u)-E_{\mathbf{j}}^{n}(u)}{\Delta t}=\frac{W(u_{\mathbf{j}}^{n+1})-W(u_{\mathbf{j}}^{n})}{\Delta t}
=\frac{u_{\mathbf{j}}^{n+1}-u_{\mathbf{j}}^{n}}{\Delta t}W'(u_{\mathbf{j}}^{n})+\frac{(u_{\mathbf{j}}^{n+1}-u_{\mathbf{j}}^n)^2}{2 \Delta t}W^{''}(u_{\mathbf{j}}^{n+\half}).
\end{align*} 
Using \eqref{eq:h_updt} in the first term, we then obtain
\begin{equation}
\label{eq:h-energy-proof}
\frac{E_{\mathbf{j}}^{n+1}(u)-E_{\mathbf{j}}^{n}(u)}{\Delta t}=-(\nabla_h^{-} \cdot \mathbf{q}^*)_{\mathbf{j}}\Pi^n_{\mathbf{j}}+r_1(\Delta_h \Pi^n)_{\mathbf{j}}\Pi^n_{\mathbf{j}}+\frac{(u_{\mathbf{j}}^{n+1}-u_{\mathbf{j}}^n)^2}{2 \Delta t}W^{''}(u_{\mathbf{j}}^{n+\half}),
\end{equation}
where we have used the fact that $u^*=u^n$ and $\Pi^*=\Pi^n$.

Now for the contribution of the term involving $\psi$ in \eqref{eq:eng-split-proof}, we compute
\begin{align}\label{psi_energy}
\frac{E_{\mathbf{j}}^{n+1}(\psi)-E_{\mathbf{j}}^{n}(\psi)}{\Delta t}&=\epsilon\frac{|\psi_{\mathbf{j}}^{n+1}|^2-|\psi^n_{\mathbf{j}}|^2}{2\Delta t}=\epsilon\frac{|\psi_{\mathbf{j}}^{n+1}|^2-|\psi^*_{\mathbf{j}}|^2}{2\Delta t}+\epsilon\frac{|\psi^*_{\mathbf{j}}|^2-|\psi^n_{\mathbf{j}}|^2}{2\Delta t},\nonumber\\
&=\epsilon\frac{(\psi_{\mathbf{j}}^{n+1}-\psi^*_{\mathbf{j}})\psi^*_{\mathbf{j}}}{\Delta t}+\epsilon\frac{(\psi_{\mathbf{j}}^{n+1}-\psi^*_{\mathbf{j}})^2}{2\Delta t}+\epsilon\frac{(\psi^*_{\mathbf{j}}-\psi^n_{\mathbf{j}})\psi^*_{\mathbf{j}}}{\Delta t}-\epsilon\frac{(\psi^*_{\mathbf{j}}-\psi^n_{\mathbf{j}})^2}{2\Delta t},
\end{align}
where for any $\mathbf{a}^d,\mathbf{a}^n\in \mathbb{R}^n, ~n\geq 1$, we have used the following algebraic identities
\begin{equation}\label{eq: square_identity_1}
(\mathbf{a}^d-\mathbf{a}^n)\cdot \mathbf{a}^d=\frac12\Big(|\mathbf{a}^d|^2-|\mathbf{a}^n|^2+|\mathbf{a}^d-\mathbf{a}^n|^2\Big),
\end{equation}
and 
\begin{equation}\label{eq: square_identity_2}
(\mathbf{a}^d-\mathbf{a}^n)\cdot \mathbf{a}^n=\frac12\Big(|\mathbf{a}^d|^2-|\mathbf{a}^n|^2-|\mathbf{a}^d-\mathbf{a}^n|^2\Big).
\end{equation}
Thus, using \eqref{eq:psi_updt_star} and \eqref{eq:psi_updt} in \eqref{psi_energy}, we have
\begin{equation}
\begin{aligned}
\label{eq:psi-energy-proof}
\frac{E_{\mathbf{j}}^{n+1}(\psi)-E_{\mathbf{j}}^{n}(\psi)}{\Delta t}&=-\epsilon(\nabla_h^{-} \cdot \mathbf{q}^*)_{\mathbf{j}}\psi^*_{\mathbf{j}}+\epsilon r_2(\Delta_h \psi^*)_{\mathbf{j}}\psi^*_{\mathbf{j}}-w^*_{\mathbf{j}}\psi^*_{\mathbf{j}}-(1-\epsilon)(\nabla_h^{-} \cdot \mathbf{q}^*)_{\mathbf{j}}\psi^*_{\mathbf{j}}\\
&\hspace{ 4 cm}+\epsilon\frac{(\psi_{\mathbf{j}}^{n+1}-\psi^*_{\mathbf{j}})^2}{2\Delta t}-\epsilon\frac{(\psi^*_{\mathbf{j}}-\psi^n_{\mathbf{j}})^2}{2\Delta t},\\
&=-(\nabla_h^{-} \cdot \mathbf{q}^*)_{\mathbf{j}}\psi^*_{\mathbf{j}}+\epsilon r_2(\Delta_h \psi^*)_{\mathbf{j}}\psi^*_{\mathbf{j}}-w^*_{\mathbf{j}}\psi^*_{\mathbf{j}}+\epsilon\frac{(\psi_{\mathbf{j}}^{n+1}-\psi^*_{\mathbf{j}})^2}{2\Delta t}-\epsilon\frac{(\psi^*_{\mathbf{j}}-\psi^n_{\mathbf{j}})^2}{2\Delta t}.
\end{aligned}
\end{equation}
Next, we evaluate the third term in \eqref{eq:eng-split-proof} using the identities \eqref{eq: square_identity_1} and \eqref{eq: square_identity_2} as
\begin{align*}
\frac{E_{\mathbf{j}}^{n+1}(\mathbf{q})-E_{\mathbf{j}}^{n}(\mathbf{q})}{\Delta t}&=\epsilon\frac{|\mathbf{q}_{\mathbf{j}}^{n+1}|^2-|\mathbf{q}_{\mathbf{j}}^{n}|^2}{2\Delta t}=\epsilon\frac{|\mathbf{q}_{\mathbf{j}}^{n+1}|^2-|\mathbf{q}_{\mathbf{j}}^{*}|^2}{2\Delta t}+\epsilon\frac{|\mathbf{q}_{\mathbf{j}}^{*}|^2-|\mathbf{q}_{\mathbf{j}}^{n}|^2}{2\Delta t},\\
&=\epsilon\frac{(\mathbf{q}_{\mathbf{j}}^{n+1}-\mathbf{q}_{\mathbf{j}}^{*})\cdot\mathbf{q}_{\mathbf{j}}^{*}}{\Delta t}+\epsilon\frac{|\mathbf{q}_{\mathbf{j}}^{n+1}-\mathbf{q}_{\mathbf{j}}^{*}|^2}{2\Delta t}+\epsilon\frac{(\mathbf{q}_{\mathbf{j}}^{*}-\mathbf{q}_{\mathbf{j}}^{n})\cdot \mathbf{q}_{\mathbf{j}}^{*}}{\Delta t}-\epsilon\frac{|\mathbf{q}_{\mathbf{j}}^{*}-\mathbf{q}_{\mathbf{j}}^{n}|^2}{2\Delta t}.
\end{align*}
Using \eqref{eq:q_updt_star} and \eqref{eq:q_updt}, we obtain 
\begin{equation}
\begin{aligned}
\label{eq:q-energy-proof}
\frac{E_{\mathbf{j}}^{n+1}(\mathbf{q})-E_{\mathbf{j}}^{n}(\mathbf{q})}{\Delta t}=-(\nabla_h^+(\Pi^n+\psi^*))_{\mathbf{j}}\cdot \mathbf{q}^*_{\mathbf{j}}+ r(\Delta_{h} \mathbf{q}^*)_{\mathbf{j}}\cdot \mathbf{q}^*_{\mathbf{j}}-\frac{|\mathbf{q}_{j}^{*}|^2}{\mathcal{M}_{\mathbf{j}}^n}+\epsilon\frac{|\mathbf{q}_{\mathbf{j}}^{n+1}-\mathbf{q}_{\mathbf{j}}^{*}|^2}{2\Delta t}-\epsilon\frac{|\mathbf{q}_{\mathbf{j}}^{*}-\mathbf{q}_{\mathbf{j}}^{n}|^2}{2\Delta t}.
\end{aligned}
\end{equation}
In order to evaluate the fourth term in \eqref{eq:eng-split-proof}, we proceed along the same line as the previous terms and obtain
\begin{align*}
\frac{E_{\mathbf{j}}^{n+1}(w)-E_{\mathbf{j}}^{n}(w)}{\Delta t}&=\epsilon\frac{|w_{\mathbf{j}}^{n+1}|^2-|w_{\mathbf{j}}^{n}|^2}{2\Delta t}=\epsilon\frac{|w_{\mathbf{j}}^{n+1}|^2-|w_{\mathbf{j}}^{*}|^2}{2\Delta t}+\epsilon\frac{|w_{\mathbf{j}}^{*}|^2-|w_{\mathbf{j}}^{n}|^2}{2\Delta t},\\
&=\epsilon\frac{(w_{\mathbf{j}}^{n+1}-w_{\mathbf{j}}^{*})w_{\mathbf{j}}^{*}}{\Delta t}+\epsilon\frac{(w_{\mathbf{j}}^{n+1}-w_{\mathbf{j}}^{*})^2}{2\Delta t}+\epsilon\frac{(w_{\mathbf{j}}^{*}-w_{\mathbf{j}}^{n})w_{\mathbf{j}}^*}{\Delta t}-\epsilon\frac{(w_{\mathbf{j}}^{*}-w_{\mathbf{j}}^{n})^2}{2\Delta t}.
\end{align*}
Using \eqref{eq:w_updt_star} and \eqref{eq:w_updt}, we obtain
\begin{equation}
\begin{aligned}
\label{eq:w-energy-proof}
\frac{E_{\mathbf{j}}^{n+1}(w)-E_{\mathbf{j}}^{n}(w)}{\Delta t}=\gamma(\nabla_h^{-} \cdot \mathbf{p}^*)_{\mathbf{j}}w^*_{\mathbf{j}}+ s(\Delta_{h}w^*)_{\mathbf{j}}w^*_{\mathbf{j}}+\psi^*_{\mathbf{j}}w_{\mathbf{j}}^*+\epsilon\frac{(w_{\mathbf{j}}^{n+1}-w_{\mathbf{j}}^{*})^2}{2\Delta t}-\epsilon\frac{(w_{\mathbf{j}}^{*}-w_{\mathbf{j}}^{n})^2}{2\Delta t}.
\end{aligned}
\end{equation}
Finally, we evaluate the fifth term in \eqref{eq:eng-split-proof} as
\begin{align*}
\frac{E_{\mathbf{j}}^{n+1}(\mathbf{p})-E_{\mathbf{j}}^{n}(\mathbf{p})}{\Delta t}&=\gamma\frac{|\mathbf{p}_{\mathbf{j}}^{n+1}|^2-|\mathbf{p}_{\mathbf{j}}^{n}|^2}{2\Delta t}=\gamma\frac{(\mathbf{p}_{\mathbf{j}}^{n+1}-\mathbf{p}_{\mathbf{j}}^{*})\cdot \mathbf{p}_{\mathbf{j}}^{*}}{\Delta t}+\gamma\frac{|\mathbf{p}_{\mathbf{j}}^{n+1}-\mathbf{p}_{\mathbf{j}}^{*}|^2}{2\Delta t}.
\end{align*}
Then using \eqref{eq:p_updt_star} and \eqref{eq:p_updt}, we further obtain
\begin{equation}
\begin{aligned}
\label{eq:p-energy-proof}
\frac{E_{\mathbf{j}}^{n+1}(\mathbf{p})-E_{\mathbf{j}}^{n}(\mathbf{p})}{\Delta t}=\gamma(\nabla_h^+ w^*)_{\mathbf{j}}\cdot \mathbf{p}^*_{\mathbf{j}}+ \gamma r_3(\Delta_h \mathbf{p}^*)_{\mathbf{j}}\cdot \mathbf{p}^*_{\mathbf{j}}+\gamma\frac{|\mathbf{p}_{\mathbf{j}}^{n+1}-\mathbf{p}_{\mathbf{j}}^{*}|^2}{2\Delta t}.
\end{aligned}
\end{equation}
Finally, adding \eqref{eq:h-energy-proof}, \eqref{eq:psi-energy-proof}, \eqref{eq:q-energy-proof}, \eqref{eq:w-energy-proof}, and \eqref{eq:p-energy-proof}, the terms $-w_{\mathbf{j}}^*\psi^*_{\mathbf{j}}$ and $\psi^*_{\mathbf{j}}w_{\mathbf{j}}^*$ cancel. Rearranging the remaining
terms gives \eqref{eq: local_energy_id}. This proves the lemma.
\end{proof}
With this Lemma in place, we now proceed to prove the Theorem  \ref{thm:eng-stable} as follows.
\begin{proof}[Proof of Theorem  \ref{thm:eng-stable}]
Taking sum of all $\mathbf{j}$ in \eqref{eq:disc_energy-ineq} yields
\begin{equation}
\label{eq:sum_eng-id}
\sum_{\mathbf{j}}\left(\frac{E_{\mathbf{j}}^{n+1}-E_{\mathbf{j}}^{n}}{\Delta t}\right)=-\sum_{\mathbf{j}}\frac{|\mathbf{q}_{j}^{*}|^2}{\mathcal{M}_{\mathbf{j}}^n}+\sum_{\mathbf{j}}R_{\mathbf{j}}^{n+1}.
\end{equation}
Using the identity $\sum_{\mathbf{j}}(\Delta_h f)_{\mathbf{j}}f_{\mathbf{j}}=-\sum_{j}\lVert \nabla_h^\pm f\rVert^2_{\mathbf{j}}$ in view of Lemma \ref{lemma:discrete_ibp}, we bound $\sum_{\mathbf{j}}R_{\mathbf{j}}^{n+1}$ as
\[
\begin{aligned}
\sum_{\mathbf{j}}R_{\mathbf{j}}^{n+1}
&\leq
\sum_{\mathbf{j}} \frac{(u_{\mathbf{j}}^{n+1}-u_{\mathbf{j}}^n)^2}{2\Delta t}
W''(u_{\mathbf{j}}^{n+\frac12})
+\epsilon\sum_{\mathbf{j}}\frac{(\psi_{\mathbf{j}}^{n+1}-\psi^*_{\mathbf{j}})^2}{2\Delta t} \\
&\quad
+\epsilon\sum_{\mathbf{j}}\frac{|\mathbf q_{\mathbf{j}}^{n+1}-\mathbf q_{\mathbf{j}}^*|^2}{2\Delta t}
+\epsilon\sum_{\mathbf{j}}\frac{(w_{\mathbf{j}}^{n+1}-w_{\mathbf{j}}^*)^2}{2\Delta t}
+\gamma\sum_{\mathbf{j}}\frac{|\mathbf p_{\mathbf{j}}^{n+1}-\mathbf p_{\mathbf{j}}^*|^2}{2\Delta t} \\
&\quad
-r_1\sum_{\mathbf{j}}|\nabla_h^+\Pi^*|_{\mathbf{j}}^2
-\epsilon r_2\sum_{\mathbf{j}}|\nabla_h^+\psi^*|_{\mathbf{j}}^2
 -\gamma r_3\sum_{\mathbf{j}}|\nabla_h^+\mathbf p^*|_{\mathbf{j}}^2 
-r\sum_\mathbf{j}|\nabla_h^+\mathbf q^*|_{\mathbf{j}}^2
-s\sum_{\mathbf{j}}|\nabla_h^+w^*|_{\mathbf{j}}^2 .
\end{aligned}
\]
Using \eqref{eq:semi-imp}-\eqref{eq:expl} and the classical inequality $(a+b)^2\leq 2(a^2+b^2)$, wherever needed we then obtain
\begin{align}\label{eq: main_remainders}
\sum_{\mathbf{j}}R_{\mathbf{j}}^{n+1}&\leq \Delta t \sum_{\mathbf{j}} r_1^2(\Delta_h \Pi^*)_{\mathbf{j}}^2W^{''}(u_{\mathbf{j}}^{n+\half})+\Delta t\sum_{\mathbf{j}}|\nabla_h^{-} \cdot \mathbf{q}^*|_{\mathbf{j}}^2W^{''}(u_{\mathbf{j}}^{n+\half})-r_1\sum_{\mathbf{j}}|\nabla_h^+ \Pi^*|^2_{\mathbf{j}}\nonumber\\
&+\Delta t \epsilon\sum_{\mathbf{j}}|\nabla_h^+ \Pi^*|_{\mathbf{j}}^2+\Delta t \epsilon\sum_{\mathbf{j}}|\nabla_h^+ \psi^*|_{\mathbf{j}}^2 -r\sum_{\mathbf{j}}|\nabla_h^{+}\mathbf{q}^*|_{\mathbf{j}}^2\nonumber\\
&+\Delta t\epsilon\sum_{\mathbf{j}}r_2^2(\Delta_h \psi^*)_\mathbf{j}^2+\Delta t \epsilon\sum_{\mathbf{j}}|\nabla_h^{-} \cdot \mathbf{q}^*|_{\mathbf{j}}^2-r_2\epsilon\sum_{\mathbf{j}}|\nabla_h^+ \psi^*|_{\mathbf{j}}^2\nonumber\\
&+\Delta t \epsilon\sum_{\mathbf{j}}\gamma^2|\nabla_h^- \cdot\mathbf{p}^*|_{\mathbf{j}}^2-s\sum_{\mathbf{j}}|\nabla_h^+ w^*|_{\mathbf{j}}^2\nonumber\\
&+\sum_{\mathbf{j}}\Delta t \gamma r_3^2|\Delta_h \mathbf{p}^*|_{\mathbf{j}}^2+\Delta t \gamma\sum_{\mathbf{j}}|\nabla_h^+ w^*|_{\mathbf{j}}^2 -\gamma r_3\sum_{\mathbf{j}}|\nabla_h^{+} \mathbf{p}^*|_{\mathbf{j}}^2\nonumber\\
&:=R^{n+1}(u)+R^{n+1}(\psi)+R^{n+1}(\mathbf q)+R^{n+1}(w)+R^{n+1}(\mathbf p),
\end{align}
where
\begin{align}
R^{n+1}(u)
&:=
\Delta t r_1^2\sum_{\mathbf{j}}(\Delta_h \Pi^*)_{\mathbf{j}}^2W''(u_{\mathbf{j}}^{n+\frac12})
-r_1\sum_{\mathbf{j}}|\nabla_h^+\Pi^*|_{\mathbf{j}}^2
+\Delta t\epsilon\sum_{\mathbf{j}}|\nabla_h^+\Pi^*|_{\mathbf{j}}^2,
\label{eq:R-h}
\\
R^{n+1}(\psi)
&:=
\Delta t\epsilon r_2^2\sum_{\mathbf{j}}(\Delta_h\psi^*)_{\mathbf{j}}^2
-\epsilon r_2\sum_{\mathbf{j}}|\nabla_h^+\psi^*|_{\mathbf{j}}^2
+\Delta t\epsilon\sum_{\mathbf{j}}|\nabla_h^+\psi^*|_{\mathbf{j}}^2,
\label{eq:R-psi}
\\
R^{n+1}(\mathbf q)
&:=
\Delta t\sum_{\mathbf{j}}|\nabla_h^{-}\cdot\mathbf q^*|_{\mathbf{j}}^2W''(u_{\mathbf{j}}^{n+\frac12})
+\Delta t\epsilon\sum_{\mathbf{j}}|\nabla_h^{-}\cdot\mathbf q^*|_{\mathbf{j}}^2
 -r\sum_{\mathbf{j}}|\nabla_h^{+}\mathbf q^*|_{\mathbf{j}}^2,
\label{eq:R-q}
\\
R^{n+1}(w)
&:=
\Delta t\gamma\sum_{\mathbf{j}}|\nabla_h^+w^*|_{\mathbf{j}}^2
-s\sum_{\mathbf{j}}|\nabla_h^+w^*|_{\mathbf{j}}^2,
\label{eq:R-w}
\\
R^{n+1}(\mathbf p)
&:=
\Delta t\gamma r_3^2\sum_{\mathbf{j}}|\Delta_h\mathbf p^*|_{\mathbf{j}}^2
+\Delta t\epsilon\gamma^2\sum_{\mathbf{j}}|\nabla_h^{-}\cdot\mathbf p^*|_{\mathbf{j}}^2
 -\gamma r_3\sum_{\mathbf{j}}|\nabla_h^{+}\mathbf p^*|_{\mathbf{j}}^2 .
\label{eq:R-p}
\end{align}
In what follows, we prove that each of these remainder terms is non-positive under the CFL condition \eqref{eq:cfl_stability}. To this end, using Lemma \ref{lemma:discrete_inverse} in \eqref{eq:R-h}, we directly compute
\begin{equation}
\begin{aligned}
R^{n+1}(u)&=\Delta t \sum_{\mathbf{j}} r_1^2(\Delta_h \Pi^*)_{\mathbf{j}}^2W^{''}(u_{\mathbf{j}}^{n+\half})-r_1\sum_{\mathbf{j}}|\nabla_h^+ \Pi^*|^2_{\mathbf{j}}+\Delta t \epsilon\sum_{\mathbf{j}}|\nabla_h^+ \Pi^*|_{\mathbf{j}}^2,\\
&\leq \frac{4d\Delta tr_1^2}{\Delta x^2}\sum_{\mathbf{j}}|\nabla_h^+ \Pi^*|_{\mathbf{j}}^2W^{''}(u_{\mathbf{j}}^{n+\half})-r_1\sum_{\mathbf{j}}|\nabla_h^+ \Pi^*|^2_{\mathbf{j}}+\Delta t \epsilon\sum_{\mathbf{j}}|\nabla_h^+ \Pi^*|_{\mathbf{j}}^2.
\end{aligned}
\end{equation}
Taking $r_1=2\Delta t\epsilon$ and as $\epsilon\leq 1$, we have the following estimate in view of the CFL condition \eqref{eq:cfl_stability}
\begin{equation}
\begin{aligned}
\sum_{\mathbf{j}}R_{\mathbf{j}}^{n+1}(u)&\leq \epsilon\Delta t\frac{16d \Delta t^2}{\Delta x^2} \sum_{\mathbf{j}} |\nabla_h^+ \Pi^*|_{\mathbf{j}}^2W^{''}(u_{\mathbf{j}}^{n+\half})-\epsilon\Delta t \sum_{\mathbf{j}}|\nabla_h^+ \Pi^*|_{\mathbf{j}}^2,\\
&\leq \Delta t\epsilon\sum_{\mathbf{j}}\left(\frac{16 d\Delta t^2}{\Delta x^2}W^{''}(u_{\mathbf{j}}^{n+\half})-1\right)|\nabla_h^+ \Pi^*|_{\mathbf{j}}^2\leq 0.
\end{aligned}
\end{equation}
Similarly, we compute
\begin{equation}
\begin{aligned}
R^{n+1}(\psi)&=\Delta t\epsilon \sum_{\mathbf{j}} r_2^2(\Delta_h \psi^*)_{\mathbf{j}}^2-r_2\epsilon\sum_{\mathbf{j}}(\nabla_h^+ \psi^*)^2_{\mathbf{j}}+\Delta t \epsilon\sum_{\mathbf{j}}(\nabla_h^+ \psi^*)_{\mathbf{j}}^2,\\
&\leq \frac{4\epsilon d \Delta t r_2^2}{\Delta x^2}\sum_{\mathbf{j}}|\nabla_h^+ \psi^*|_{\mathbf{j}}^2-\epsilon r_2\sum_{\mathbf{j}}|\nabla_h^+ \psi^*|^2_{\mathbf{j}}+\Delta t \epsilon\sum_{\mathbf{j}}|\nabla_h^+ \psi^*|_{\mathbf{j}}^2.
\end{aligned}
\end{equation}
Taking $r_2=2\Delta t$, $\epsilon\leq 1$ and using using the CFL condition \eqref{eq:cfl_stability}, we obtain the estimate
\begin{equation}
R^{n+1}(\psi)\leq\Delta t\epsilon\sum_{\mathbf{j}}\left(\frac{16d\Delta t^2}{\Delta x^2}-1\right)|\nabla_h^+ \psi^*|_{\mathbf{j}}^2\leq 0.
\end{equation}
Using \eqref{eq:div_less_grad}, we compute the remainder term in $\mathbf{q}$ as
\begin{equation}
\label{eq:remaind_q-star}
\begin{aligned}
R^{n+1}(\mathbf{q})&=\Delta t \sum_{\mathbf{j}}|\nabla_h^{-} \cdot \mathbf{q}^*|_{\mathbf{j}}^2W^{''}(u_{\mathbf{j}}^{n+\half})-r\sum_{\mathbf{j}}(\nabla_h^+ \mathbf{q}^*)^2_{\mathbf{j}}+\Delta t \epsilon\sum_{\mathbf{j}}|\nabla_h^{-} \cdot \mathbf{q}^*|_{\mathbf{j}}^2,\\
&\leq\sum_{\mathbf{j}}\left(d\big(\epsilon+W^{''}(u_{\mathbf{j}}^{n+\half})\big)\frac{\Delta t}{r}-1\right)r|\nabla_h^{+} \mathbf{q}^*|_{\mathbf{j}}^2.
\end{aligned}
\end{equation}
For $r=\Delta x$ and using the CFL condition \eqref{eq:cfl_stability}, it follows that $R^{n+1}(\mathbf{q})\leq 0$.

Next, we compute
\begin{equation}
\label{eq:remaind_w-star}
R^{n+1}(w)=\left(\gamma\frac{\Delta t}{s}-1\right)s\sum_{\mathbf{j}}|\nabla_h^+ w^*|_{\mathbf{j}}^2.
\end{equation}
Choosing $s=\Delta x$ and using CFL condition \eqref{eq:cfl_stability}, we have $R^{n+1}(w)\leq 0$.

Finally, using \eqref{eq:div_less_grad} we have
\begin{equation}
\begin{aligned}
R^{n+1}(\mathbf p)&= r_3^2\gamma\Delta t \sum_{\mathbf{j}}|\Delta_h \mathbf{p}^*|_{\mathbf{j}}^2 -r_3\gamma\sum_{\mathbf{j}}|\nabla_h^+ \mathbf{p}^*|_{\mathbf{j}}^2+\Delta t \epsilon \gamma^2 \sum_{\mathbf{j}}|\nabla_h^- \cdot\mathbf{p}^*|_{\mathbf{j}}^2,\\
&\leq \gamma\frac{4d\Delta t}{\Delta x^2}r_3^2\sum_{\mathbf{j}}|\nabla_h^+\mathbf{p}^*|_{\mathbf{j}}^2 -\gamma r_3\sum_{\mathbf{j}}|\nabla_h^+\mathbf{p}^*|_{\mathbf{j}}^2+\Delta t\gamma^2\epsilon\sum_{\mathbf{j}}|\nabla_h^- \cdot\mathbf{p}^*|_{\mathbf{j}}^2,\\
&\leq \gamma\frac{4d\Delta t}{\Delta x^2}r_3^2\sum_{\mathbf{j}}|\nabla_h^+\mathbf{p}^*|_{\mathbf{j}}^2-\gamma r_3\sum_{\mathbf{j}}|\nabla_h^+\mathbf{p}^*|_{\mathbf{j}}^2+\Delta t\gamma^2\epsilon d\sum_{\mathbf{j}}|\nabla_h^+\mathbf{p}^*|_{\mathbf{j}}^2.
\end{aligned}
\end{equation}
We use $r_3=2\epsilon d\gamma\Delta t$, $\epsilon\leq 1$ and apply the CFL condition \eqref{eq:cfl_stability} to obtain the estimate
\begin{equation}
 R^{n+1}(\mathbf{p})\leq\gamma^2\epsilon\Delta t\sum_{\mathbf{j}}\left(\frac{16d^3\gamma\Delta t^2}{\Delta x^2}-1\right)|\nabla_h^+\mathbf{p}^*|_{\mathbf{j}}^2\leq 0.
\end{equation}
Thus, from \eqref{eq: main_remainders} we obtain $\sum_{\mathbf{j}}R_{\mathbf{j}}^{n+1}\leq 0$. Injecting this into \eqref{eq:sum_eng-id} and multiplying by $\Delta x^d$, we prove Theorem~\ref{thm:eng-stable}.
\end{proof}
It is important to point out that the proof of Lemma \ref{lemma: num_existence} and Theorem  \ref{thm:eng-stable} relies on the positivity of the discrete mobility $\mathcal{M}_{\mathbf{j}}^n$. For constant positive mobilities, this is trivial. However, in general, the mobility under consideration may degenerate or may fail to remain positive if $u_{\mathbf{j}}^n$ loses positivity. Therefore, it is important to ensure that the numerical film height $u_{\mathbf{j}}^n$ remains positive for all $n\geq 0$. 

Note that no additional time-step restriction is required at the predictor stage for the positivity of $u^n$. Indeed, the variable $u$ does not evolve at this stage, and its predicted value satisfies $u^*=u^n$. Therefore, whenever $u^{n}>0$, we have $u^{*}>0$. However, in the corrector stage, a suitable time-step restriction is required to ensure that the updated explicit value $u^{n+1}$ remains positive. Thus, the positivity condition only imposes a time-step restriction at the corrector stage and ensures that whenever $u^{n}>0$, then we have $u^{n+1}>0$.

We ensure positivity using a time-step restriction in the following Lemma. 
\begin{lemma}[Positivity of the numerical solution $u_{\mathbf{j}}^n$]
\label{lemma: positivity}
    Assume that the discrete solution satisfies $u^n_\mathbf{j}>0$ for all $\mathbf{j}$ and some $n\geq 0$. Then the corrector step as defined in \eqref{eq:expl} admits a positive solution at the next time step i.e.\ $u_\mathbf{j}^{n+1}>0$ for all $\mathbf{j}$ under the following timestep restriction
\begin{equation}
  \label{eq:suff_tstep_positivity}
\frac{\Delta t}{\Delta x}
\max_\alpha\left(
\left| \mathbf{q}_{\mathbf{j}}^{*,\alpha} \right|
+
\left| \mathbf{q}_{\mathbf{j}-\mathbf{e}_\alpha}^{*,\alpha} \right|
+
\sqrt{2\left|
\Pi_{\mathbf{j}+\mathbf{e}_\alpha}^*
-2\Pi_{\mathbf{j}}^*
+\Pi_{\mathbf{j}-\mathbf{e}_\alpha}^*
\right|}
\right)
\leq
\frac{1}{d}\min\{1, u^*_{\mathbf j}\},
\end{equation}
\end{lemma}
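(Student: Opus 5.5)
The plan is to treat the corrector update \eqref{eq:h_updt} for $u$ as an explicit perturbation of $u^*_{\mathbf j}=u^n_{\mathbf j}>0$. I would bound each directional contribution by a fraction $u^*_{\mathbf j}/d$ of the predicted height, and then sum over the $d$ directions. Since \eqref{eq:h_updt_star} gives $u^*=u^n$, the quantities $\mathbf q^*$ and $\Pi^*$ are fixed by the predictor step; Lemma~\ref{lemma: num_existence} guarantees they exist. So $u^{n+1}_{\mathbf j}$ is an explicit function of known data.

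\textbf{Step 1 (directional splitting).} Using the definitions of $\nabla_h^-\cdot$ and $\Delta_h$, I rewrite \eqref{eq:h_updt} as
\[
u^{n+1}_{\mathbf j}=u^*_{\mathbf j}+\sum_{\alpha=1}^d\Big(-\tfrac{\Delta t}{\Delta x}\big(q^{*,\alpha}_{\mathbf j}-q^{*,\alpha}_{\mathbf j-\mathbf e_\alpha}\big)+\tfrac{r_1\Delta t}{\Delta x^2}\,\delta_\alpha^2\Pi^*_{\mathbf j}\Big),
\]
where $\delta_\alpha^2\Pi^*_{\mathbf j}=\Pi^*_{\mathbf j+\mathbf e_\alpha}-2\Pi^*_{\mathbf j}+\Pi^*_{\mathbf j-\mathbf e_\alpha}$. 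I then bound the $\alpha$-th summand from below by $-(a_\alpha+\tilde b_\alpha)$, with
\[
a_\alpha=\tfrac{\Delta t}{\Delta x}\big(|q^{*,\alpha}_{\mathbf j}|+|q^{*,\alpha}_{\mathbf j-\mathbf e_\alpha}|\big),\qquad \tilde b_\alpha=\tfrac{r_1\Delta t}{\Delta x^2}|\delta_\alpha^2\Pi^*_{\mathbf j}|.
\]

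\textbf{Step 2 (the diffusion term).} With the choice $r_1=c_u\epsilon\Delta t$ and $c_u=2$, as in the proof of Theorem~\ref{thm:eng-stable}, and using $\epsilon\le1$, I get $\tilde b_\alpha\le 2\frac{\Delta t^2}{\Delta x^2}|\delta^2_\alpha\Pi^*_{\mathbf j}|=b_\alpha^2$. Here $b_\alpha=\frac{\Delta t}{\Delta x}\sqrt{2|\delta^2_\alpha\Pi^*_{\mathbf j}|}$ is exactly the square-root term in \eqref{eq:suff_tstep_positivity}. Set $m=\min\{1,u^*_{\mathbf j}\}$. The restriction \eqref{eq:suff_tstep_positivity} reads $a_\alpha+b_\alpha\le m/d$, so $b_\alpha\le m/d\le 1$ and hence $b_\alpha^2\le b_\alpha$. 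This is precisely why the $\min\{1,\cdot\}$ appears: it turns the quadratic scaling of the diffusive term into a linear one. It follows that $a_\alpha+\tilde b_\alpha\le a_\alpha+b_\alpha\le m/d\le u^*_{\mathbf j}/d$.

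\textbf{Step 3 (summation).} Summing over $\alpha$ gives $u^{n+1}_{\mathbf j}\ge u^*_{\mathbf j}-\sum_\alpha(a_\alpha+\tilde b_\alpha)\ge 0$, and the same argument applies at every $\mathbf j$.

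The main obstacle is upgrading this to \emph{strict} positivity, since all the inequalities can in principle be equalities. I would first check where slack is available. One source is $\epsilon<1$ in $r_1$. Another is $b_\alpha^2<b_\alpha$ whenever $0<b_\alpha<1$. In the case $b_\alpha=0$, the diffusion term vanishes and equality requires the flux bound to be attained with $u^*_{\mathbf j}\le 1$. If these degenerate equality cases cannot be excluded, I would state the argument with a strict inequality in \eqref{eq:suff_tstep_positivity}, or equivalently replace $m/d$ by $(1-\theta)m/d$ as in \eqref{eq:cfl_stability}. That gives $u^{n+1}_{\mathbf j}\ge \theta u^*_{\mathbf j}>0$. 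A secondary point is the dependence on the constant $c_u$: for general $c_u>1$, the factor $2$ under the square root must be replaced by $c_u$, so I would fix $c_u=2$ as in Theorem~\ref{thm:eng-stable}.
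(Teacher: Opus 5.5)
Your proposal is correct and follows the paper's proof: the triangle inequality on the flux difference, $\epsilon\le 1$ with $r_1=2\epsilon\Delta t$, the fact that the right-hand side of \eqref{eq:suff_tstep_positivity} is at most $1$ (so the squared diffusive term is bounded by its square root), and summation over the $d$ directions. Your concern about strictness is justified: the paper's own argument also ends with only $u^{n+1}_{\mathbf j}\ge 0$, and equality can occur under the non-strict restriction (e.g.\ $\delta_\alpha^2\Pi^*_{\mathbf j}=0$, $u^*_{\mathbf j}\le 1$, and the flux bound saturated with $q^{*,\alpha}_{\mathbf j}\ge 0\ge q^{*,\alpha}_{\mathbf j-\mathbf e_\alpha}$), so your strict-inequality or $(1-\theta)$-margin version is needed to get $u^{n+1}_{\mathbf j}>0$.
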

\begin{proof}
  Note that the inequality \eqref{eq:suff_tstep_positivity} implies, for each $\alpha \in \{1,2,\dots,d\}$,
    \begin{equation}
    \label{eq:timestep_height}
    \frac{\Delta t}{\Delta x}
\left(
\left| \mathbf{q}_{\mathbf{j}}^{*,\alpha} \right|
+
\left| \mathbf{q}_{\mathbf{j}-\mathbf{e}_\alpha}^{*,\alpha} \right|
+
\sqrt{2 \left|
\Pi_{\mathbf{j}+\mathbf{e}_\alpha}^*
-2\Pi_{\mathbf{j}}^*
+\Pi_{\mathbf{j}-\mathbf{e}_\alpha}^*
\right|}
\right)
\leq
\frac{1}{d}\min\{1, u^*_{\mathbf j}\}, 
    \end{equation}
    and further 
\begin{equation}
\label{eq:timestep_posit_temp}
\frac{\Delta t}{\Delta x}
\left(
\left| \mathbf{q}_{\mathbf{j}}^{*,\alpha} - \mathbf{q}_{\mathbf{j}-\mathbf{e}_\alpha}^{*,\alpha} \right|
+
\sqrt{2 \left|
\Pi_{\mathbf{j}+\mathbf{e}_\alpha}^*
-2\Pi_{\mathbf{j}}^*
+\Pi_{\mathbf{j}-\mathbf{e}_\alpha}^*
\right|}
\right)
\leq
\frac{1}{d}\min\{1, u^*_{\mathbf j}\}. 
\end{equation}
Moreover, since $\epsilon\leq1$, from \eqref{eq:timestep_posit_temp} we have
\begin{equation}
\label{eq:timestep_posit_temp1}
\frac{\Delta t}{\Delta x}
\left(
\left| \mathbf{q}_{\mathbf{j}}^{*,\alpha} - \mathbf{q}_{\mathbf{j}-\mathbf{e}_\alpha}^{*,\alpha} \right|
+
\sqrt{2 \epsilon\left|
\Pi_{\mathbf{j}+\mathbf{e}_\alpha}^*
-2\Pi_{\mathbf{j}}^*
+\Pi_{\mathbf{j}-\mathbf{e}_\alpha}^*
\right|}
\right)
\leq
\frac{1}{d}\min\{1, u^*_{\mathbf j}\}. 
\end{equation}
Now, as the right side of \eqref{eq:timestep_posit_temp1} is less than or equal to
$1$, we get
\begin{equation}
\label{eq:time-step_proof-im}
    \frac{\Delta t}{\Delta x}
\left| \mathbf{q}_{\mathbf{j}}^{*,\alpha} - \mathbf{q}_{\mathbf{j}-\mathbf{e}_\alpha}^{*,\alpha} \right|
+
\frac{\Delta t^2}{\Delta x^2}2\epsilon \left|
\Pi_{\mathbf{j}+\mathbf{e}_\alpha}^*
-2\Pi_{\mathbf{j}}^*
+\Pi_{\mathbf{j}-\mathbf{e}_\alpha}^*
\right|
\leq
\frac{1}{d}u^*_{\mathbf j}. 
\end{equation}
As $a\leq |a|$ and $-a\leq |a|$ for any real number $a$, \eqref{eq:time-step_proof-im} leads to 
\begin{equation}
\label{eq:time-step_proof-last}
    \frac{\Delta t}{\Delta x}
\left( \mathbf{q}_{\mathbf{j}}^{*,\alpha} - \mathbf{q}_{\mathbf{j}-\mathbf{e}_\alpha}^{*,\alpha} \right)
-
\frac{\Delta t^2}{\Delta x^2}2\epsilon \left(
\Pi_{\mathbf{j}+\mathbf{e}_\alpha}^*
-2\Pi_{\mathbf{j}}^*
+\Pi_{\mathbf{j}-\mathbf{e}_\alpha}^*
\right)
\leq
\frac{1}{d}u^*_{\mathbf j}. 
\end{equation}

Finally, we use \eqref{eq:h_updt_star} and \eqref{eq:time-step_proof-last} to get
\begin{align}
u_{\mathbf{j}}^{n+1}&=
u^*_{\mathbf j}-\sum_{\alpha=1}^d\frac{\Delta t}{\Delta x}
\left( \mathbf{q}_{\mathbf{j}}^{*,\alpha} - \mathbf{q}_{\mathbf{j}-\mathbf{e}_\alpha}^{*,\alpha} \right)
+
\sum_{\alpha=1}^d\frac{\Delta t^2}{\Delta x^2}2\epsilon \left(
\Pi_{\mathbf{j}+\mathbf{e}_\alpha}^*
-2\Pi_{\mathbf{j}}^*
+\Pi_{\mathbf{j}-\mathbf{e}_\alpha}^*
\right)\geq 0. 
\end{align}
\end{proof}
\begin{remark}
    Note that the timestep condition \eqref{eq:suff_tstep_positivity} is not required for energy stability. Furthermore, because this condition is implicit during the predictor step \eqref{eq:semi-imp}, it is only used while computing the corrector step \eqref{eq:expl}. However, the timestep condition \eqref{eq:cfl_stability} applies to both steps.
\end{remark}
\begin{remark}
It is worth noting the different mechanisms by which positivity is established for the numerical approximation of the relaxation system \eqref{hyperbolic_system} and for the limiting equation \eqref{eq: main}. For the limit equation \eqref{eq: main}, existing positivity-(or non-negativity-) preserving numerical schemes, such as those developed by Zhornitskaya \& Bertozzi \cite{zhornitskaya1999positivity}, Gr\"un \& Rumpf \cite{grun2000nonnegativity}, and Kim et al. \cite{kim2024positivity}, typically rely on a discrete entropy inequality. In contrast, for the numerical approximation defined by \eqref{eq:semi-imp}-\eqref{eq:expl}, positivity of the numerical solution does not require a discrete entropy inequality. Instead, it follows directly from the structure of the scheme, provided an additional CFL condition is satisfied.
\end{remark}

\subsection{Solvability of the numerical scheme}
\label{sec:solvability}
This section describes the implementation of the scheme \eqref{eq:semi-imp}-\eqref{eq:expl}. The implicit predictor step \eqref{eq:semi-imp} has the unknowns $\mathbf{q}^*$, $\psi^*$, $w^*$, while $u^*$ and $\mathbf{p}^*$ can be explicitly solved. However, using \eqref{eq:q_updt_star}, \eqref{eq:psi_updt_star} and \eqref{eq:w_updt_star}, we first eliminate $w^*$ thereby reducing the predictor step to a linear system for $(\psi^*,\mathbf{q}^*)$. To clarify more, we first rewrite the $w^*$ predictor update \eqref{eq:w_updt_star} as

\[
L_w w^*
=
w^n + \frac{\Delta t}{\epsilon}\psi^*
+ \frac{\Delta t}{\epsilon}\gamma(1-\epsilon)(\nabla_h^- \cdot \mathbf{p}^{*}),
\]
where $L_w$ is the discrete operator defined as
\[
L_w := I - \frac{\Delta t}{\epsilon}s\,\Delta_h,
\]
which is a negative definite matrix under periodic boundary conditions. This implies that the matrix associated with the operator $L_w$ is a positive definite matrix and thus is invertible. This allows us to find $w^*$ as 
\begin{equation}
\label{eq:wstar_recovery}
w^*
=
L_w^{-1}
\left(
w^n + \frac{\Delta t}{\epsilon}\psi^*
+ \frac{\Delta t}{\epsilon}\gamma(1-\epsilon)(\nabla_h^- \cdot \mathbf{p}^{*})
\right).
\end{equation}
Now, we use the value of $w^*$ in \eqref{eq:psi_updt_star} to obtain 
\[
\psi^*
=
\psi^n
-
\frac{\Delta t}{\epsilon}(1-\epsilon)\nabla_h^- \cdot \mathbf{q}^*
-
\frac{\Delta t}{\epsilon}
L_w^{-1}
\left(
w^n + \frac{\Delta t}{\epsilon}\psi^*
+ \frac{\Delta t}{\epsilon}\gamma(1-\epsilon)\nabla_h^- \cdot \mathbf{p}^n
\right).
\]
After rearranging terms, we then obtain
\[
\left(
I + \frac{\Delta t^2}{\epsilon^2}L_w^{-1}
\right)\psi^*
+
\frac{\Delta t}{\epsilon}(1-\epsilon)\nabla_h^- \cdot \mathbf{q}^*
=
\psi^n
-
\frac{\Delta t}{\epsilon}
L_w^{-1}
\left(
w^n + \frac{\Delta t}{\epsilon}\gamma(1-\epsilon)\nabla_h^- \cdot \mathbf{p}^n
\right).
\]
Since $L_w$ is a positive definite operator so as $L_w^{-1}$ and thus the operator $\left(
I + \frac{\Delta t^2}{\epsilon^2}L_w^{-1}
\right)$ is also positive definite and, in particular, invertible. 

Using the eq. \eqref{eq:q_updt_star}, we further have
\[
\left(
I + \frac{\Delta t}{\epsilon}\dfrac{1}{\mathcal M(u^n)}
- \frac{\Delta t}{\epsilon}r\,\Delta_h
\right)\mathbf{q}^*
+
\frac{\Delta t}{\epsilon}(1-\epsilon)\nabla_h^+\psi^*
=
\mathbf{q}^n - \frac{\Delta t}{\epsilon}(1-\epsilon)\nabla_h^+ \Pi(u^n).
\]
Thus, we have a coupled system for $(\psi^*, \mathbf{q}^*)$, which can be written as a linear system of equations of the form
\begin{equation}
\label{eq:lin_sys_psi_q}
    \begin{pmatrix}
    A_{\psi^*} & \dfrac{\Delta t}{\epsilon}(1-\epsilon)\,\nabla_h^- \cdot \\
    \dfrac{\Delta t}{\epsilon}(1-\epsilon)\,\nabla_h^+ & A_{\mathbf{q}^*}
    \end{pmatrix}
    \begin{pmatrix}
    \psi^* \\
    \mathbf{q}^*
    \end{pmatrix}
    =
    \begin{pmatrix}
    f_{\psi^n} \\
    f_{\mathbf{q}^n}
    \end{pmatrix},
\end{equation}
where
\begin{equation}
    A_{\psi^*} = I + \frac{\Delta t^2}{\epsilon^2} L_w^{-1},
\qquad A_{\mathbf{q}^*} = I + \frac{\Delta t}{\epsilon}\frac{1}{\mathcal{M}(u^n)} - \frac{\Delta t}{\epsilon} r\,\Delta_h,
\end{equation}
and
\begin{equation}
    f_{\psi^n} =
\psi^n
- \frac{\Delta t}{\epsilon} L_w^{-1}
\left(
w^n + \frac{\Delta t}{\epsilon}\gamma(1-\epsilon)\nabla_h^- \cdot \mathbf{p}^n
\right), \qquad f_{\mathbf{q}^n} =
 \mathbf{q}^n
- \frac{\Delta t}{\epsilon}(1-\epsilon)\nabla_h^+ \Pi(u^n).
\end{equation}
Once $(\psi^*, \mathbf{q}^*)$ is known by solving the linear system \eqref{eq:lin_sys_psi_q}, $w^*$ can be explicitly evaluated from \eqref{eq:w_updt_star}.

Finally, $\mathbf{U}_{\mathbf{j}}^{n+1}=(u_{\mathbf{j}}^{n+1},\psi_{\mathbf{j}}^{n+1},\mathbf{q}_{\mathbf{j}}^{n+1},w_{\mathbf{j}}^{n+1},\mathbf{p}_{\mathbf{j}}^{n+1})$ can be explicitly evaluated from the correction step \eqref{eq:expl} using the known predictor step solution $\mathbf{U}_{\mathbf{j}}^*=(u_{\mathbf{j}}^{*},\psi^*_{\mathbf{j}},\mathbf{q}_{\mathbf{j}}^{*},w_{\mathbf{j}}^{*},\mathbf{p}_{\mathbf{j}}^{*})$.

\begin{remark}
    Note that the predictor step \eqref{eq:semi-imp} of the numerical scheme is linearly implicit and the corrector step \eqref{eq:expl} is fully explicit due to carefully designed IMEX splitting between the stiff and non-stiff terms. The numerical solution from $t=t^n$ to $t=t^{n+1}$ is obtained using the scheme \eqref{eq:semi-imp}-\eqref{eq:expl} by solving exactly one linear system, and the rest of the components are evaluated explicitly.
\end{remark}

\section{Numerical results}\label{sec:numerics}
In this section, we present a series of numerical tests for various choices of mobility functions and disjoining pressures to validate the numerical scheme developed in Section \ref{sec: scheme}. To validate the accuracy and scope of the scheme, we compare the numerical solutions obtained by our numerical scheme with those of the second-order finite difference scheme developed in the seminal work of Zhornitskaya and Bertozzi \cite{zhornitskaya1999positivity}. In all our test cases, we refer to the solutions obtained using the finite difference scheme \cite{zhornitskaya1999positivity} as a reference solution. For the sake of simplicity, we keep our attention in this article to one-dimensional test cases and with periodic boundary conditions and $\gamma=1$, but similar results can be obtained also for multi-dimensional test cases.
\subsection{Smooth positive solutions of thin film equation \eqref{eq: main}}
\label{sec:1d_first}
In this example, we consider a test case for which the thin film equation \eqref{eq: main} possesses a uniformly positive solution $u$ for positive initial data $u_0$ \cite{bernis1990higher}. In particular, we consider $\Pi(u)=0$ and $\mathcal{M}(u)=u^4$, and the following periodic initial data
\begin{equation}
\begin{aligned}
    u^\epsilon(0,x) &= 0.8 - \cos((x-\pi )) + 0.25  \cos(2 (x -\pi)),\\
     \psi^\epsilon(0,x) &= -\gamma\partial_{xx} u_0^\epsilon,\\ q^\epsilon(0,x) &=
    \mathcal{M}(u_0^\epsilon)\partial_x\big(\gamma \partial_{xx} u_0^\epsilon - \Pi(u_0^\epsilon)\big),\\
     w^\epsilon(0,x) &= -\partial_x\big(
    \mathcal{M}(u_0^\epsilon)\partial_x\big(\gamma \partial_{xx} u_0^\epsilon - \Pi(u_0^\epsilon)\big)\big),\\
    p^\epsilon(0,x) &= \partial_x u_0^\epsilon.
\end{aligned}
\end{equation}
The computational domain $\Omega=[0,2\pi]$ is divided into $2000$ mesh points for the proposed scheme \eqref{eq:semi-imp}-\eqref{eq:expl}. In Figure~\ref{fig:first_test}, we depict the solution profiles at $T=0.01$ and the corresponding initial values for different choices of $\epsilon$. To validate the accuracy of our numerical approach, we compare our numerical solutions with the reference solution obtained by the positivity-preserving numerical scheme \cite{zhornitskaya1999positivity} with $100$ mesh points. The numerical solution of the relaxation system obtained using \eqref{eq:semi-imp}-\eqref{eq:expl} approximates the limit solution remarkably well, even though our proposed scheme is only first-order accurate.

\begin{figure}[htbp]
  \centering
    \includegraphics[height=0.185\textheight]{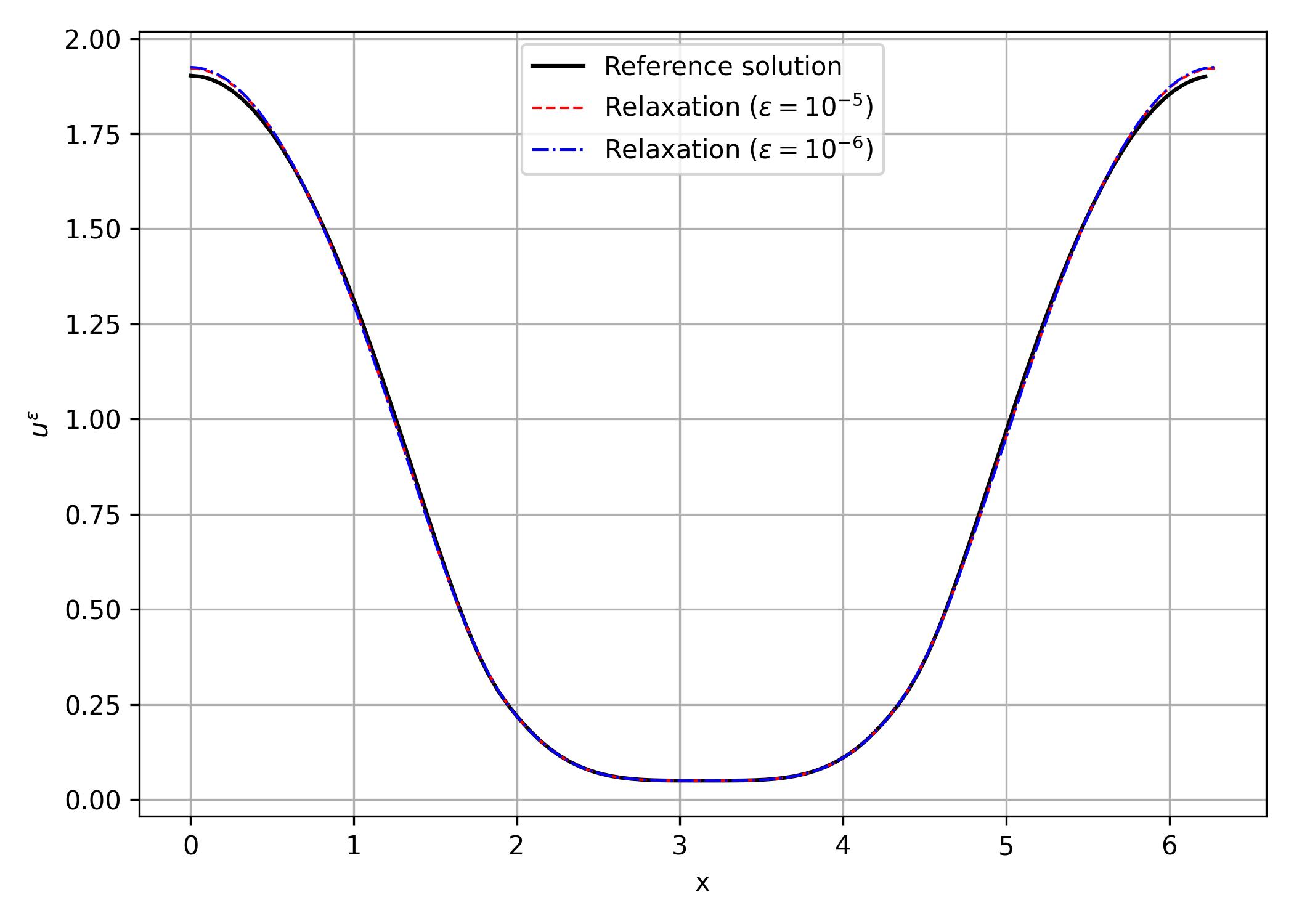}
    \includegraphics[height=0.185\textheight]{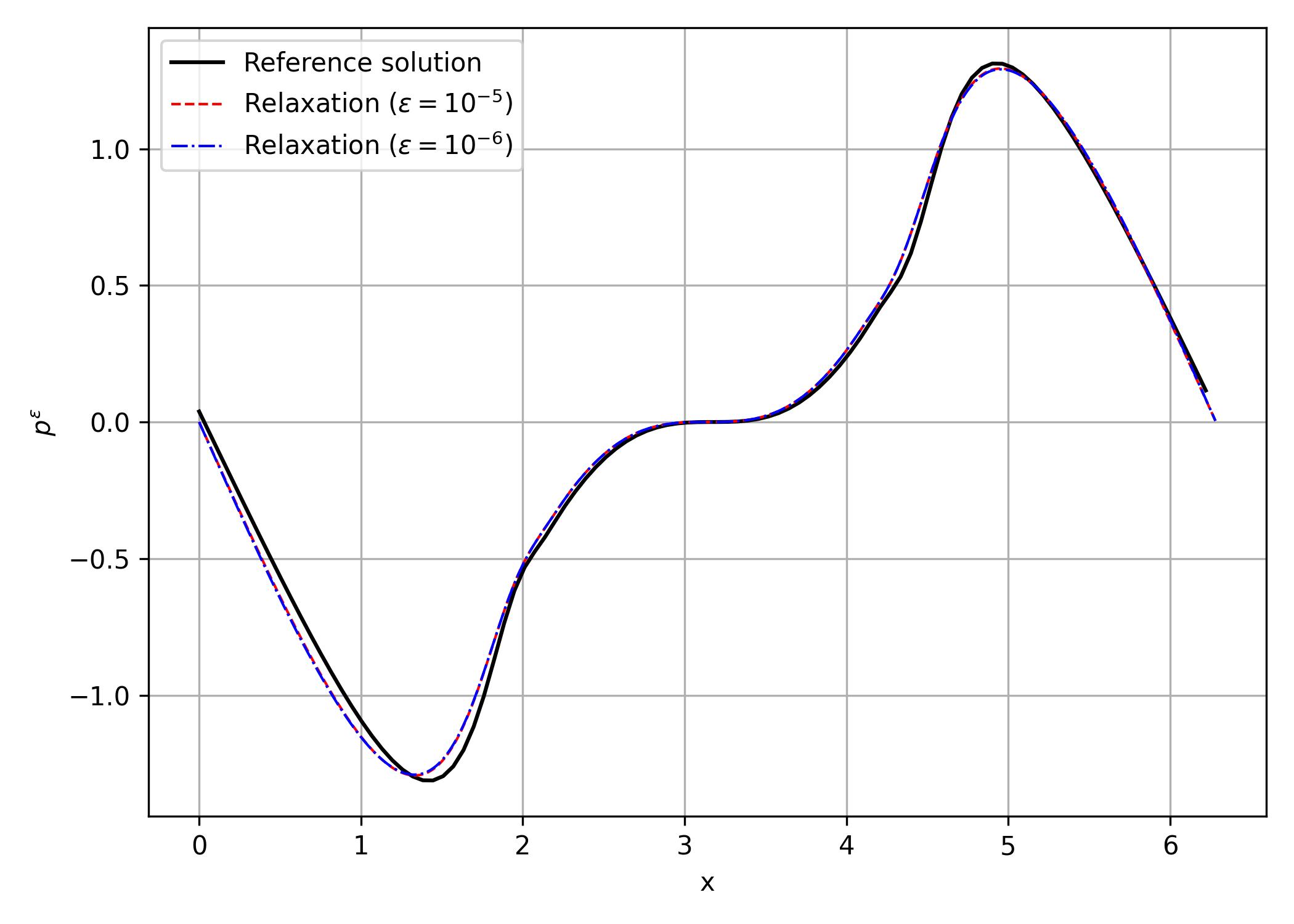}
    \includegraphics[height=0.185\textheight]{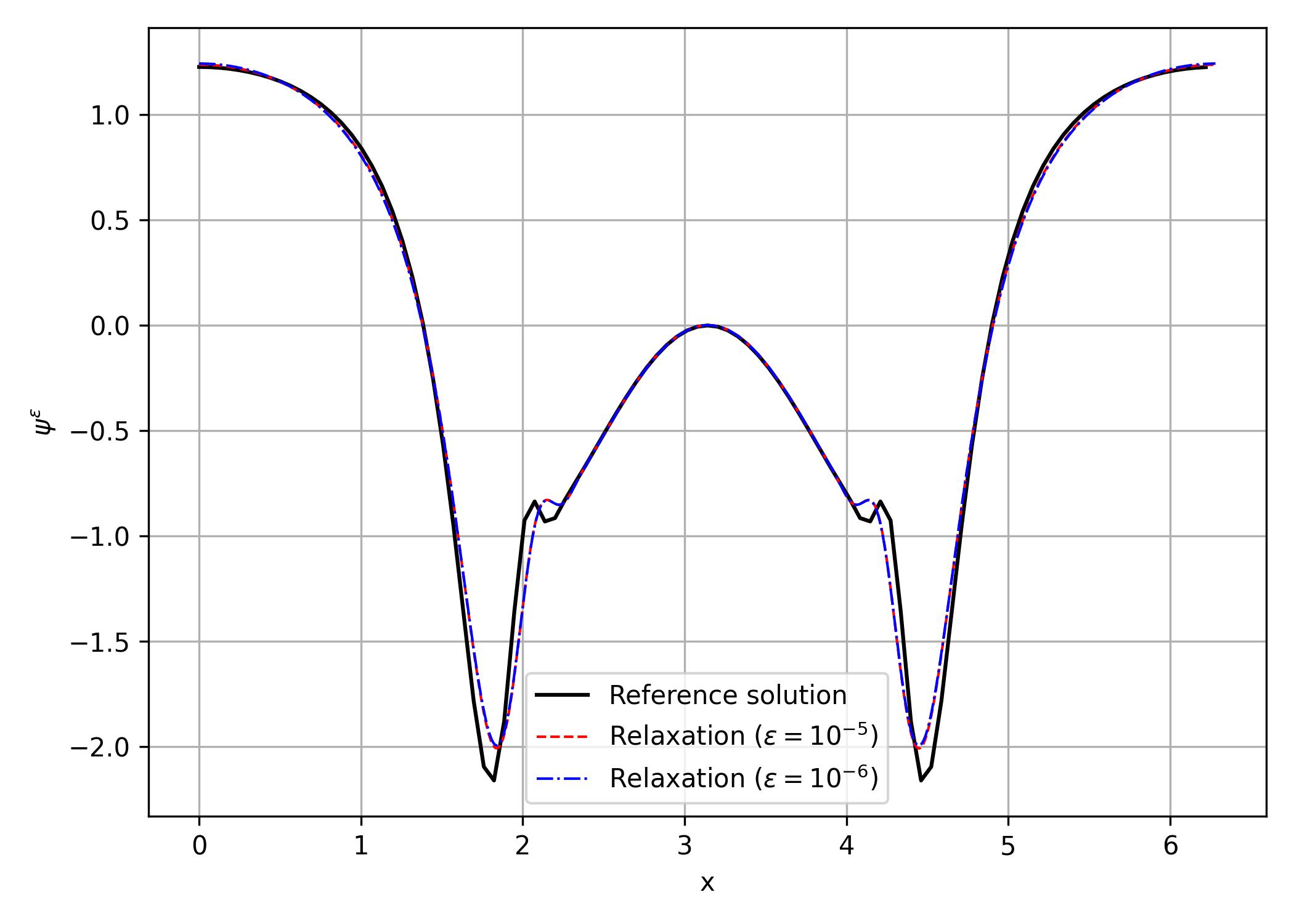}
    \includegraphics[height=0.185\textheight]{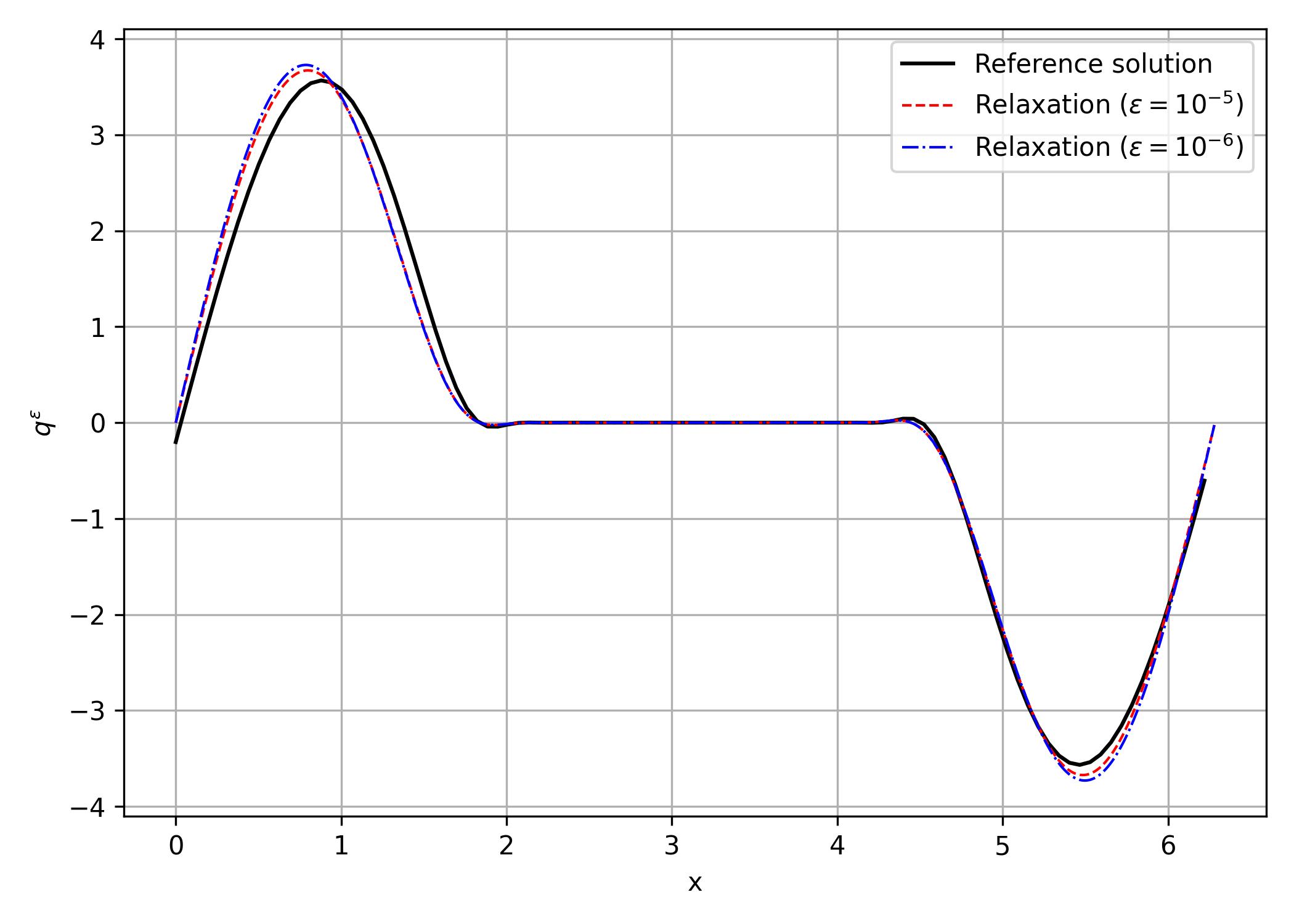}
    \includegraphics[height=0.185\textheight]{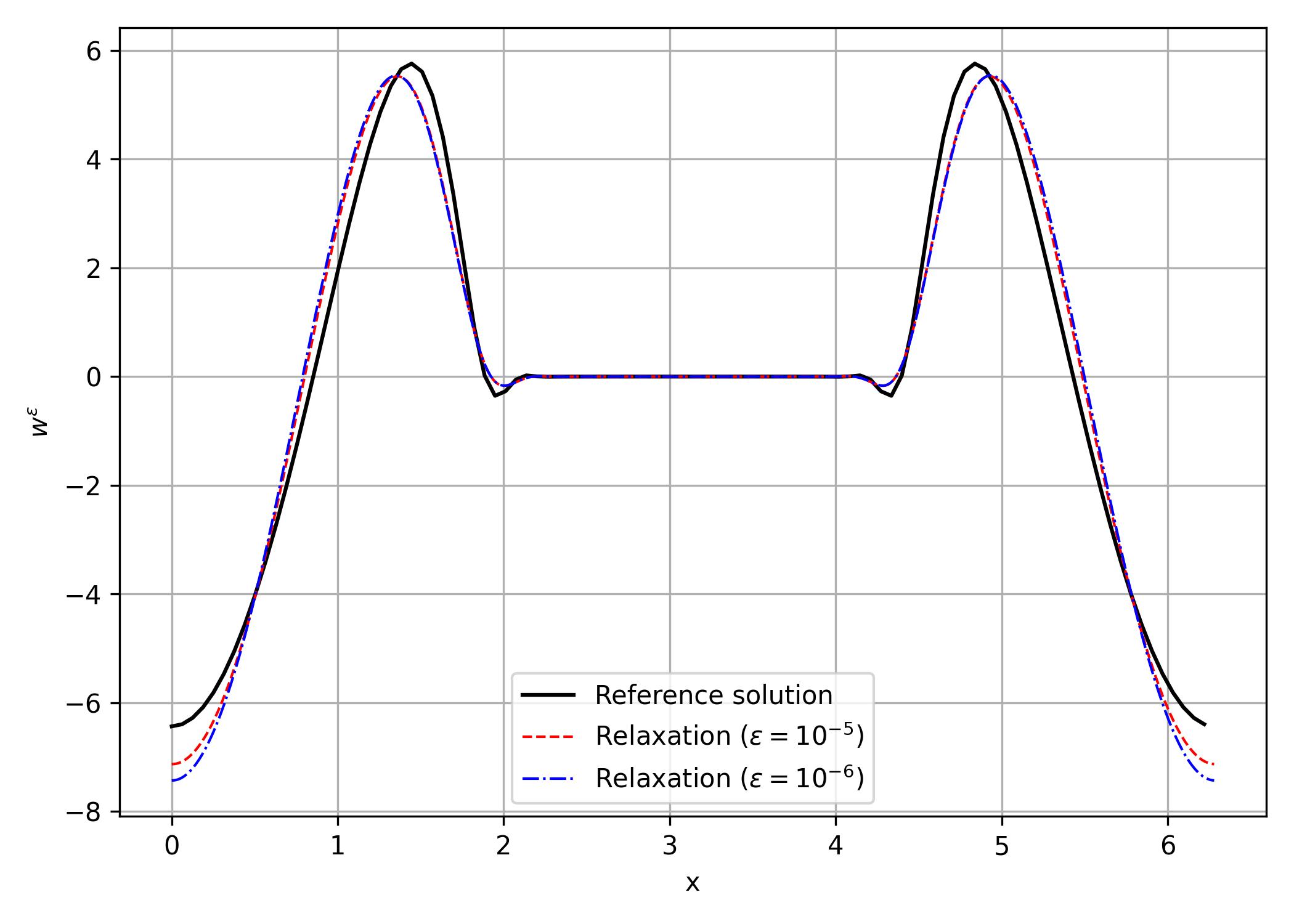}
    \caption{Comparison of numerical solutions and reference solutions at $T=0.01$ for different relaxation parameter values for the test case \ref{sec:1d_first}.}  
    \label{fig:first_test}
\end{figure}

\subsection{Positivity preservation for degenerate mobility}
\label{sec:1d_second}
In this example, we consider the test case introduced by Zhornitskaya and Bertozzi \cite{zhornitskaya1999positivity}. Specifically, we set $\mathcal{M}(u)=\sqrt{u}$, and $\Pi(u)=0$. To run simulations for this test case, a positivity-preserving numerical scheme is necessary (see \cite{zhornitskaya1999positivity} or \cite{grun2000nonnegativity} for more details). We regularize the mobility to obtain a positive numerical solution along with Lemma \ref{lemma: positivity}. Precisely, for $\delta>0$, we regularize the mobility function following \cite{zhornitskaya1999positivity} as follows
\[
\mathcal{M}_\delta(u_\delta)=\dfrac{u_\delta^4 \mathcal{M}(u_\delta)}{\delta \mathcal{M}(u_\delta)+u_\delta^4}.
\]
Assuming well-prepared initial data of the form \eqref{initial_data_hyp_relaxation}, we define the following initial conditions for the unknown vector $\mathbf{U} = (u, \psi, q, w, p)^\top$:
\begin{equation}
\begin{aligned}
    u^\epsilon(0,x) &= 0.8 - \cos(\pi x) + 0.25 \cos(2 \pi x) =: u_0^\epsilon,\\
    \psi^\epsilon(0,x) &= -\gamma\partial_{xx} u_0^\epsilon,\\
     q^\epsilon(0,x) &= \sqrt{u_0^\epsilon}\partial_x\big(\gamma \partial_{xx} u_0^\epsilon\big),\\
       w^\epsilon(0,x) &= -\partial_x\Big( \sqrt{u_0^\epsilon}\partial_x\big(\gamma \partial_{xx} u_0^\epsilon - \Pi(u_0^\epsilon)\big)\Big),\\
    p^\epsilon(0,x) &= \partial_x u_0^\epsilon.
  \end{aligned}
\end{equation}
We discretize the computational domain $\Omega=[-1,1]$ using $4000$ mesh points for the numerical scheme \eqref{eq:semi-imp}-\eqref{eq:expl}. Figure~\ref{fig:second_test} illustrates the solution profiles for the variable $u^\eps$ at various times alongside its corresponding initial values, evaluated with a relaxation parameter $\epsilon=10^{-6}$. We compare the numerical solutions obtained using the proposed scheme against a reference solution obtained using the finite difference scheme from \cite{zhornitskaya1999positivity}. The numerical solution of the relaxation system approximates the limit equation's solution remarkably well, even though our proposed scheme is only first-order accurate. Furthermore, Figure~\ref{fig:second_test_energy} displays the energy profiles of the numerical solutions for the system \eqref{hyperbolic_system} with $\epsilon=10^{-5}$ and $\epsilon=10^{-6}$, comparing them directly against the energy profile of the limit equation \eqref{eq: main}. The asymptotic behavior of these profiles demonstrates the energy consistency of our approach.

A key outcome of this test is that it confirms the positivity-preserving nature of the numerical solution for the relaxation system \eqref{hyperbolic_system} obtained using the scheme \eqref{eq:semi-imp}-\eqref{eq:expl}. As pointed out in \cite{zhornitskaya1999positivity}, numerical evaluations for this specific test case can yield spurious results if the grid is insufficiently resolved. Our successful approximation demonstrates that the scheme developed in this article is both asymptotic-preserving and positivity-preserving.
\begin{figure}[htbp]
  \centering
    \includegraphics[height=0.25\textheight]{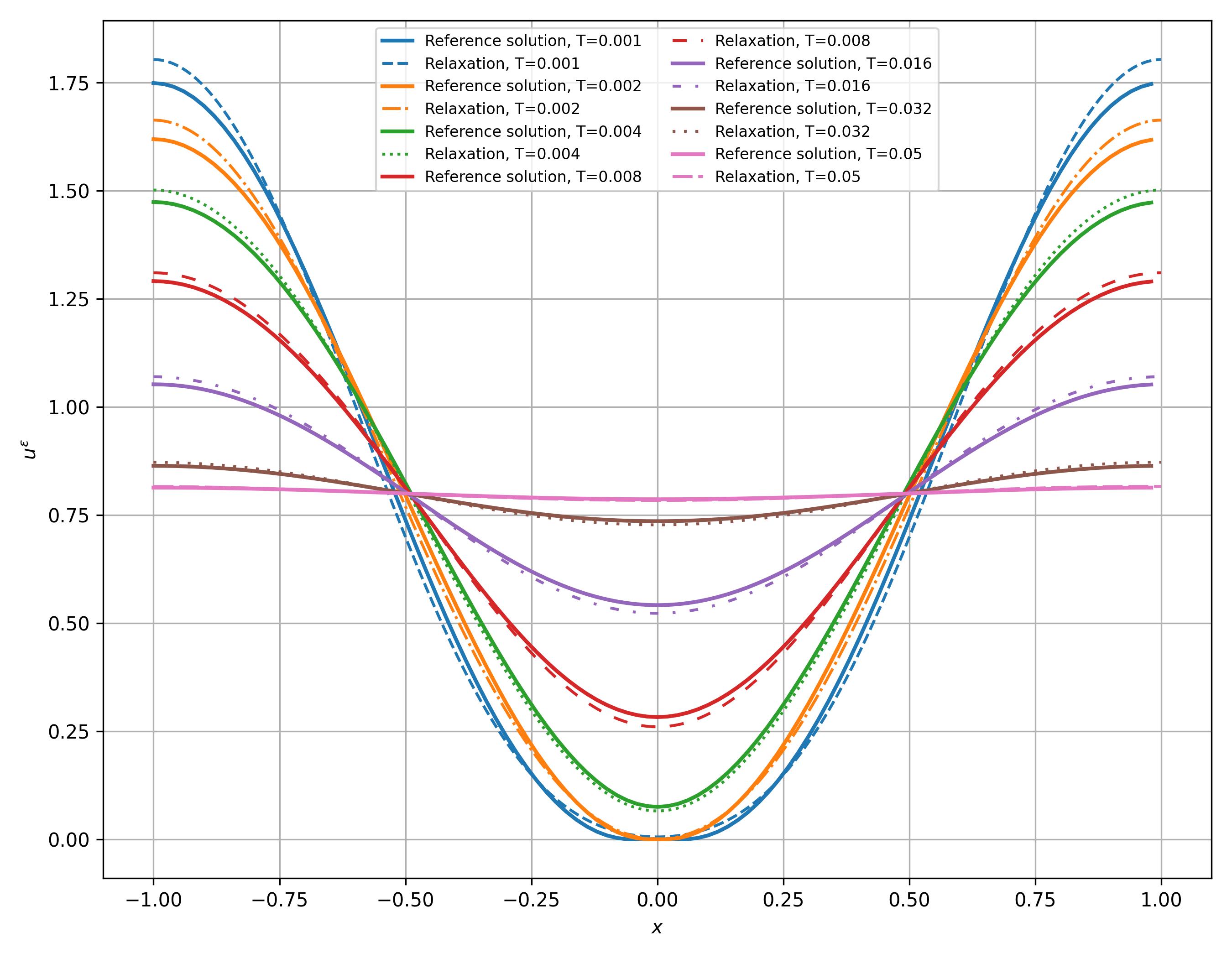}
    \caption{Comparison of numerical solutions for the relaxation system \eqref{hyperbolic_system} and limit equation \eqref{eq: main} at different times corresponding to relaxation parameter $\epsilon=10^{-6}$ for the test case \ref{sec:1d_second}.}  
    \label{fig:second_test}
\end{figure}

\begin{figure}[htbp]
  \centering
    \includegraphics[height=0.25\textheight]{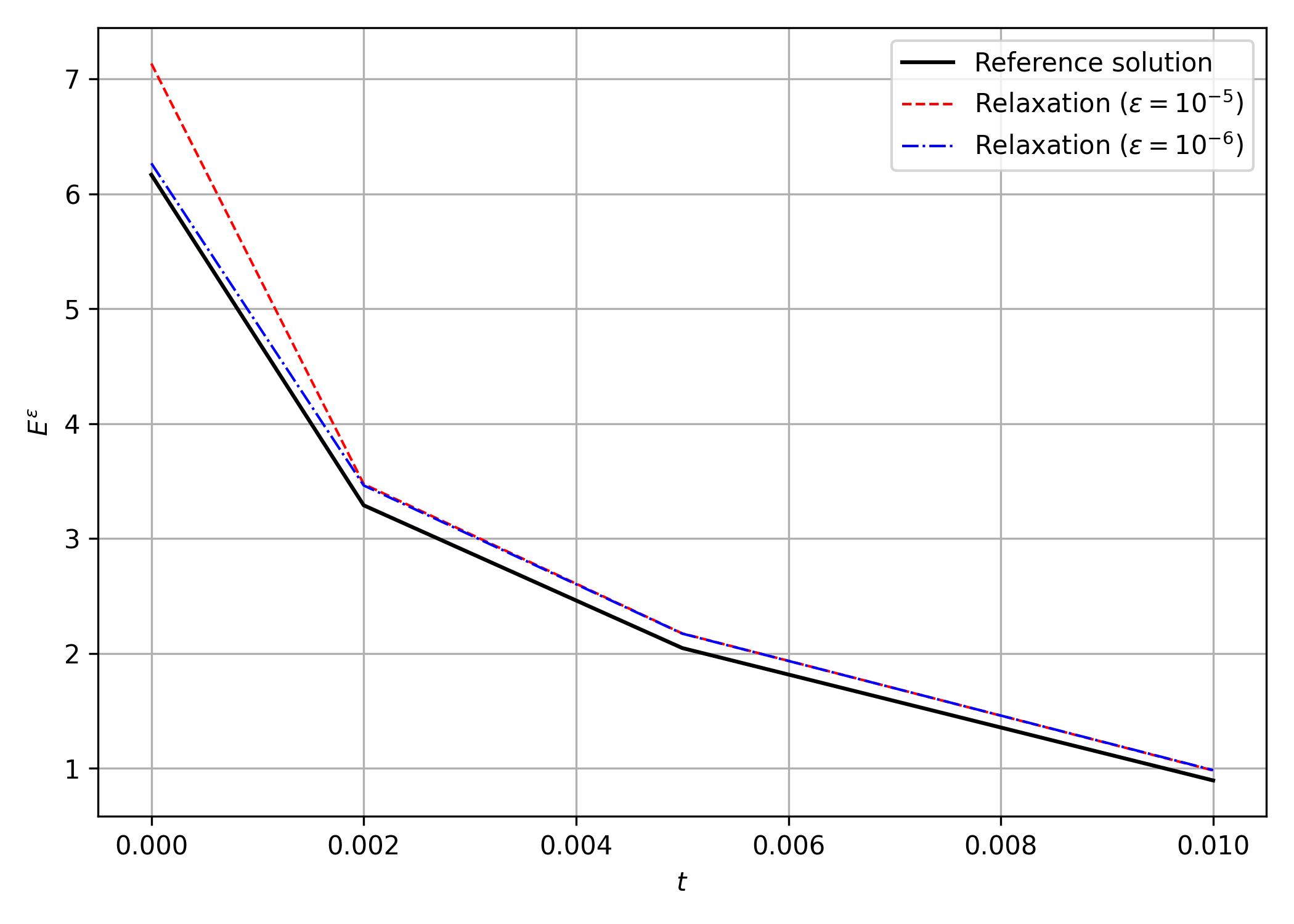}
    \caption{Comparison of energy plots corresponding to small values of relaxation parameters for the test case \ref{sec:1d_second}.}  
    \label{fig:second_test_energy}
\end{figure}
\subsection {Near-rupture solution of a thin film equation}
\label{sec:1d_third}
In this test case, we consider the Hele-Shaw law for the thin-film equation and set $\Pi(u)=0$ and $\mathcal{M}(u)=u$. We utilize the following well-prepared initial data in this test problem. The initial data is inspired by a test case in \cite{becker2005thin}.  
\begin{equation}
\begin{aligned}
    u^\epsilon(0,x) &= (x - 0.5)^4 + 0.001,\\
     \psi^\epsilon(0,x) &= -\gamma\partial_{xx} u_0^\epsilon,\\ q^\epsilon(0,x) &=
    \mathcal{M}(u_0^\epsilon)\partial_x\big(\gamma \partial_{xx} u_0^\epsilon - \Pi(u_0^\epsilon)\big),\\
    w^\epsilon(0,x) &= -\partial_x\big(
    \mathcal{M}(u_0^\epsilon)\partial_x\big(\gamma \partial_{xx} u_0^\epsilon - \Pi(u_0^\epsilon)\big)\big),\\
    p^\epsilon(0,x) &= \partial_x u_0^\epsilon.
\end{aligned}
\end{equation}
We divide the computational domain $\Omega=[0,1]$ into $4000$ mesh points for the proposed scheme \eqref{eq:semi-imp}-\eqref{eq:expl}. To gain insights into the near-rupture regime and behavior of our proposed scheme \eqref{eq:semi-imp}-\eqref{eq:expl}, we plot the solution profiles for the conserved variable $u^\eps$ in Figure~\ref{fig:third_test} at times $T=0.00, 0.002, 0.012,$ and $0.04$ with different choices of $\epsilon$. We compare our numerical solutions with the reference solution obtained using the positivity-preserving finite difference scheme developed in \cite{zhornitskaya1999positivity}. One can clearly observe that at time $T=0.012$, the solution $u^\eps$ tends to go very near the rupture regime. However, the positivity preservation proposed in our scheme handles it precisely, and the simulation does not break after $T=0.012$. After this critical time, the numerical solution becomes uniformly positive. The solution profile looks almost identical for both schemes. We also plot the energy profile of the proposed numerical scheme in Figure \ref{fig:third_test_energy}. Energy consistency is again observed regardless of the choice of the mobility function or the initial data.
\begin{figure}[htbp]
  \centering
    \includegraphics[height=0.185\textheight]{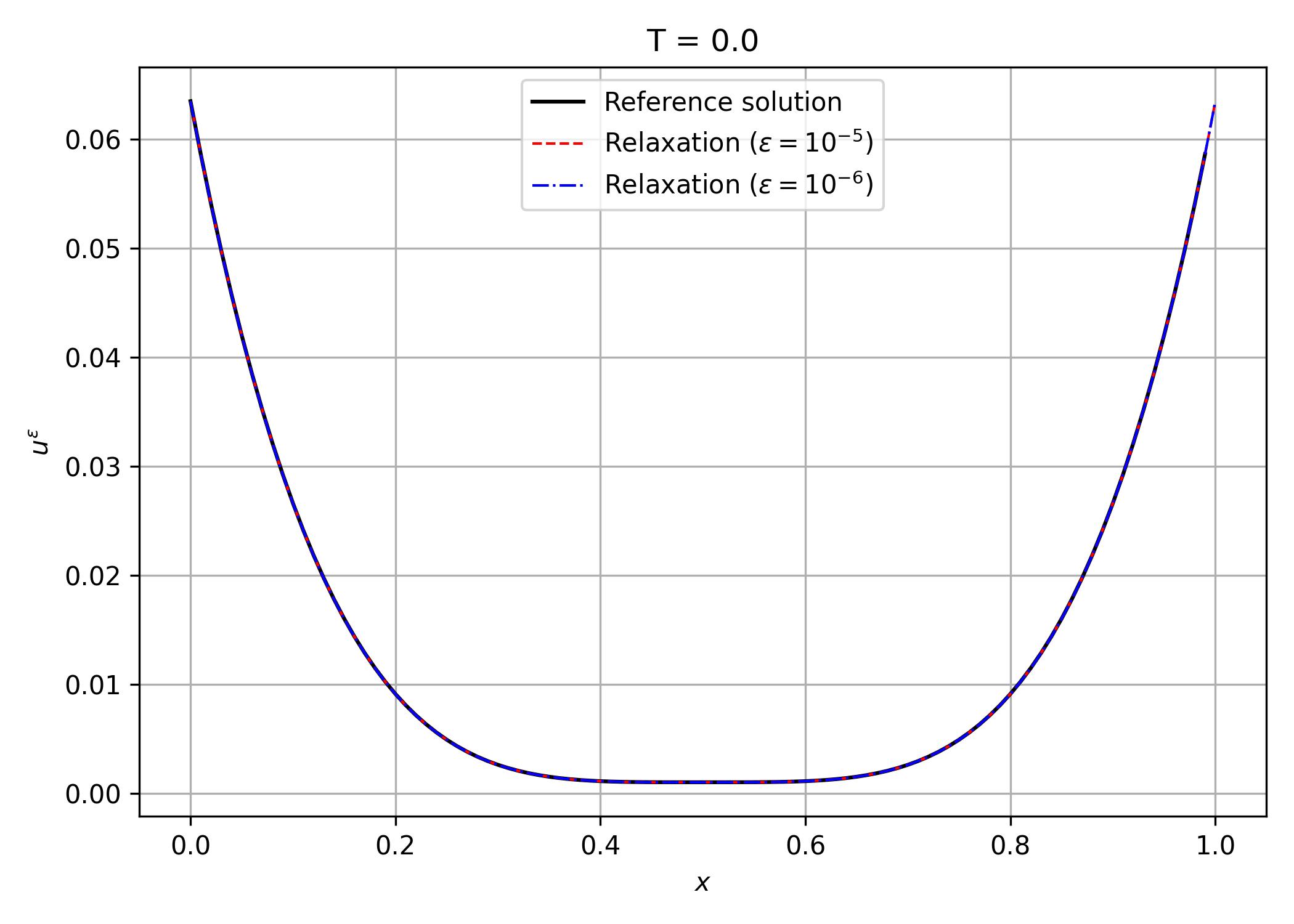}
    \includegraphics[height=0.185\textheight]{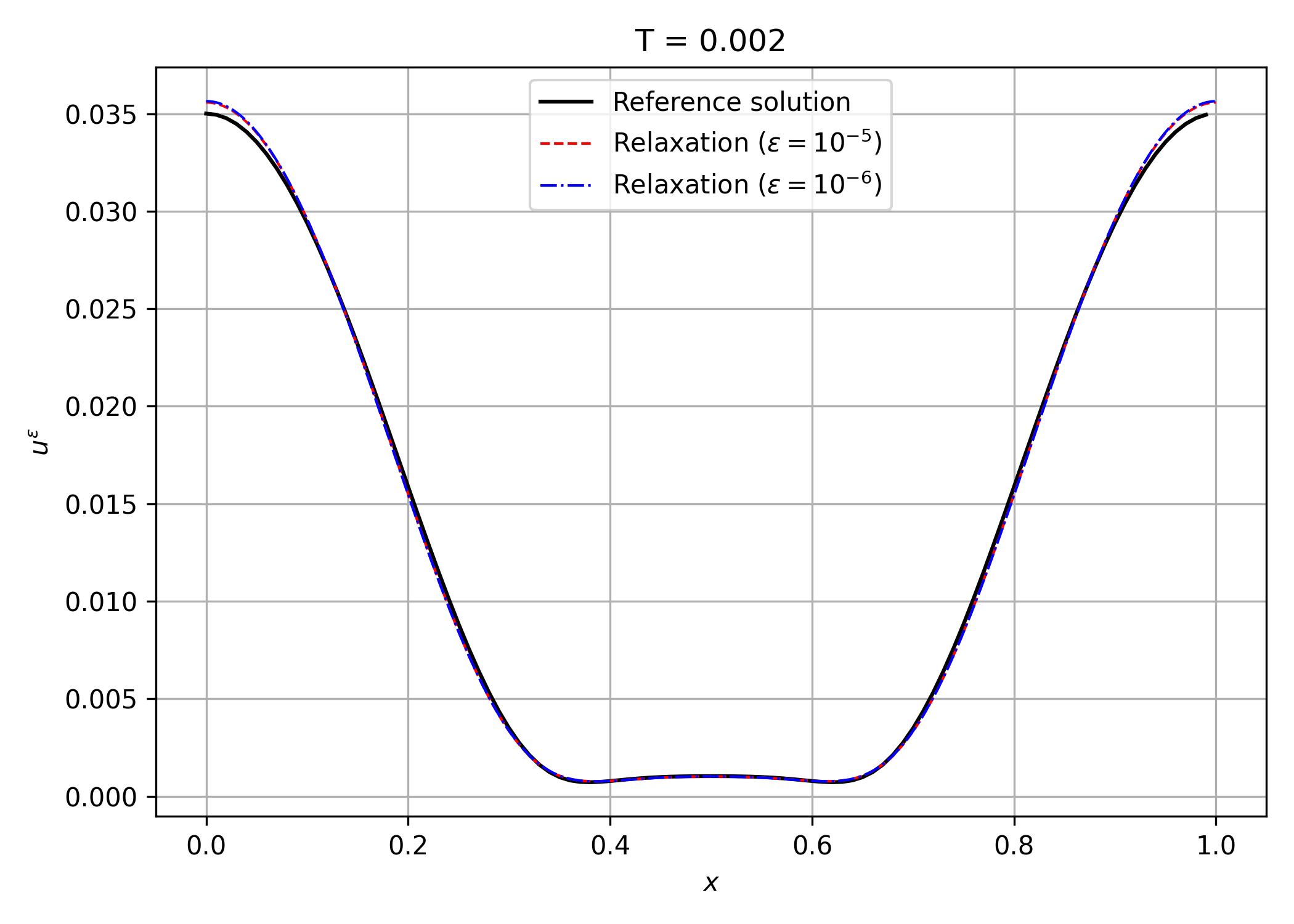}
    \includegraphics[height=0.185\textheight]{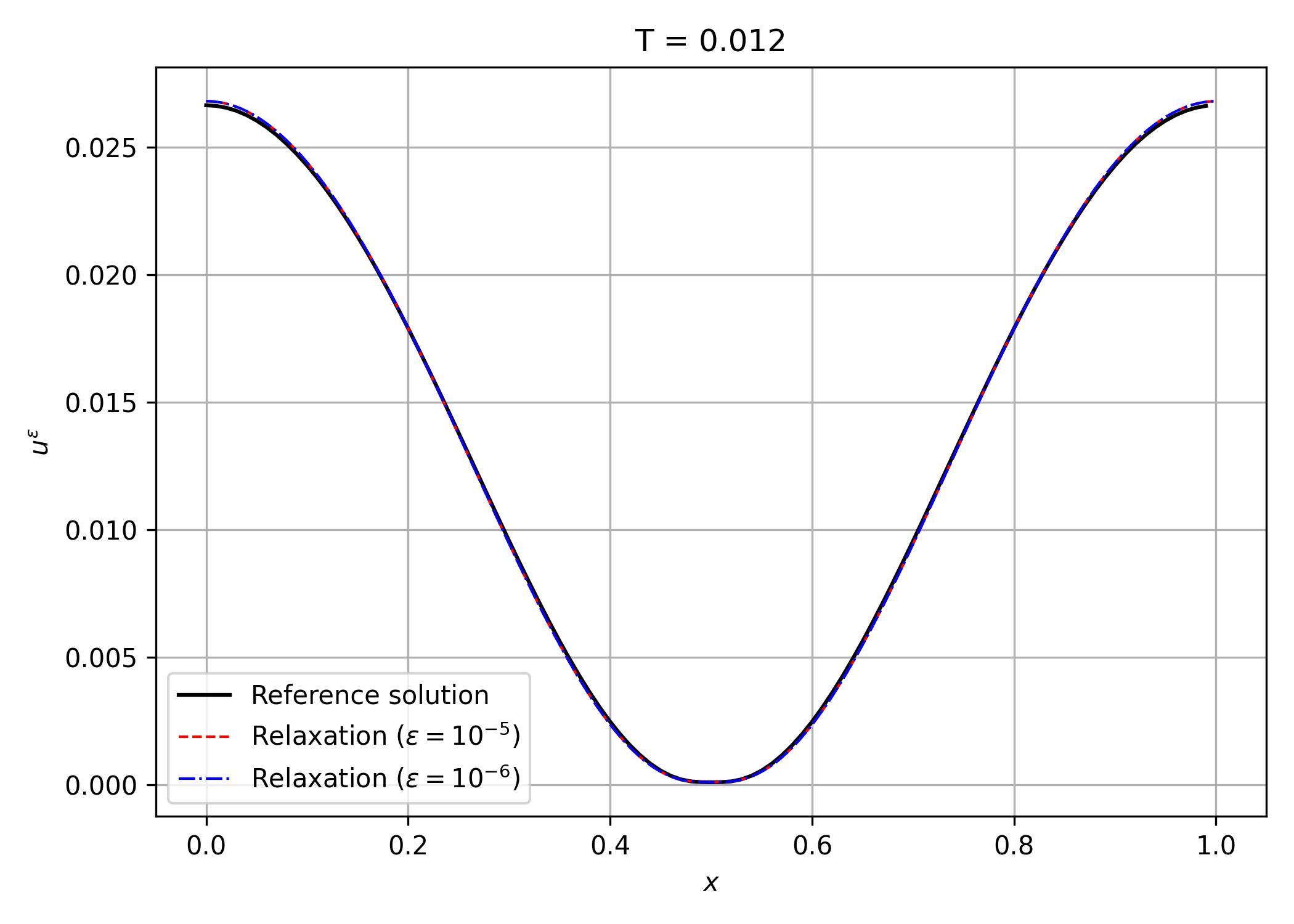}
    \includegraphics[height=0.185\textheight]{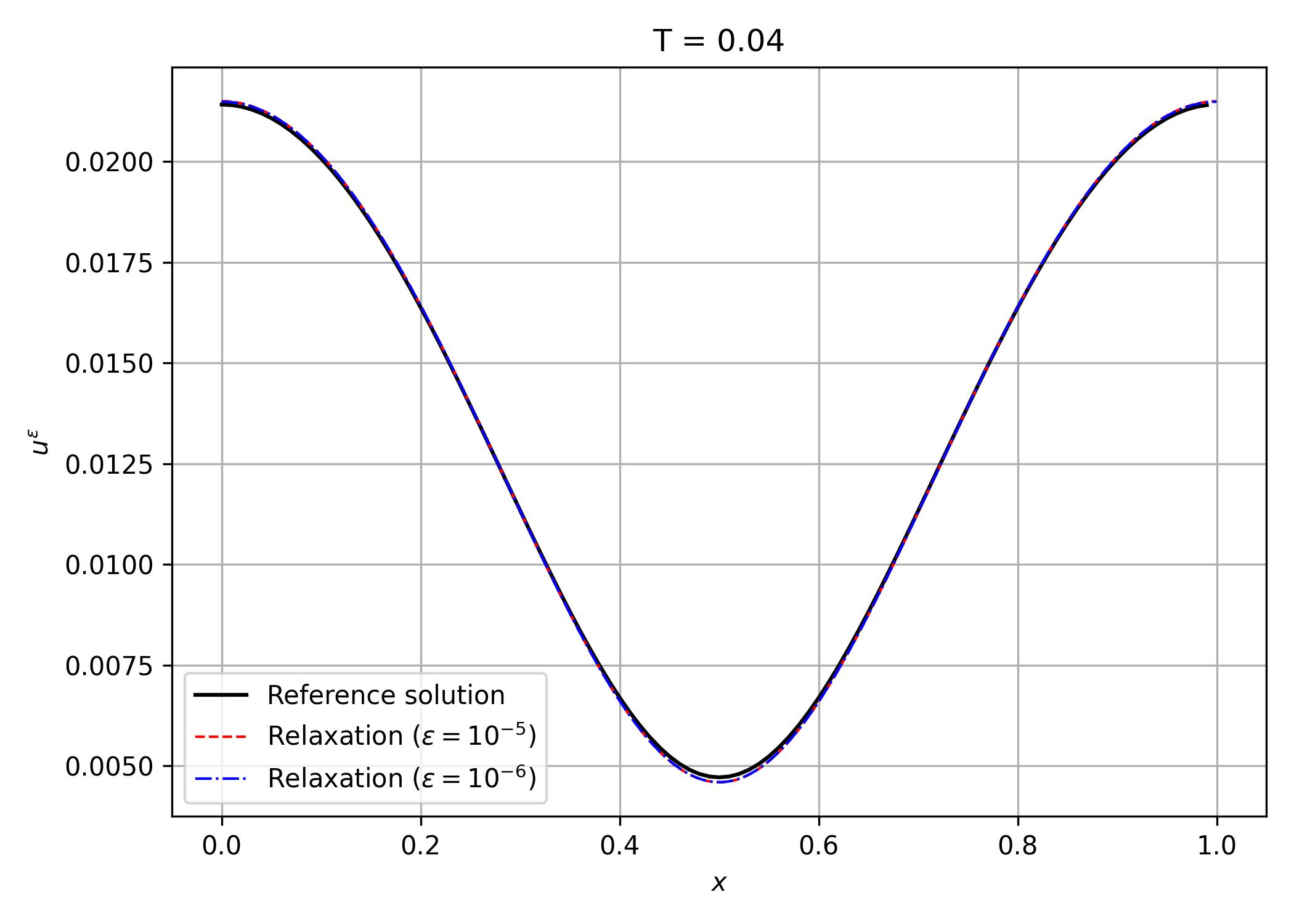}
    \caption{Comparison of temporal evolution of film heights obtained using the proposed scheme and the reference solution with different epsilon values for the test case \ref{sec:1d_third}.}  
    \label{fig:third_test}
\end{figure}

\begin{figure}[htbp]
  \centering
    \includegraphics[height=0.25\textheight]{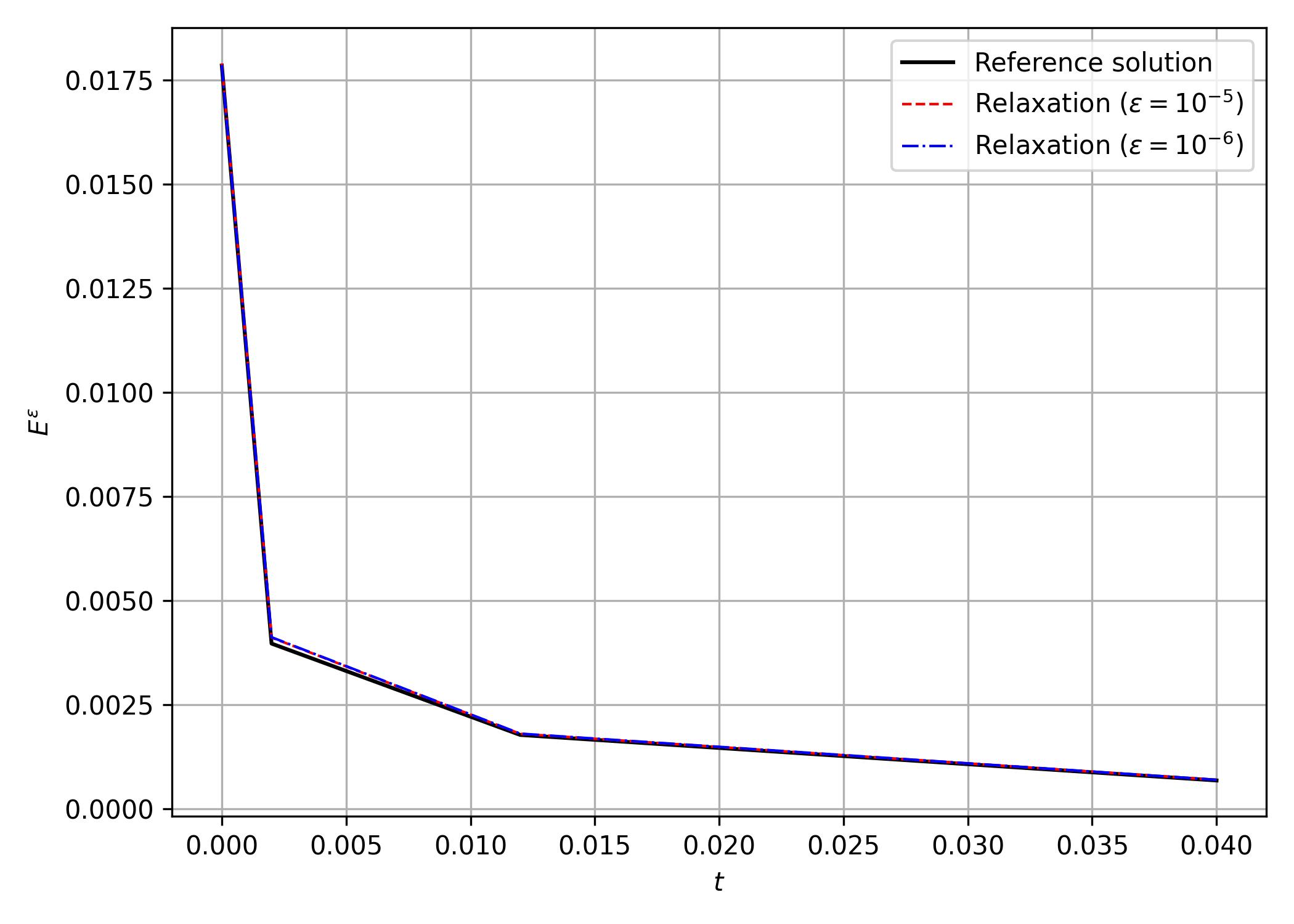}
    \caption{Comparison of energy plots corresponding to small values of the relaxation parameter for the test case \ref{sec:1d_third}.}  
    \label{fig:third_test_energy}
\end{figure}

\subsection{Constant equilibrium with nonzero pressure}
\label{sec:1d_fourth}
To investigate the effect of nonzero disjoining pressure on the solution profile, we consider the following initial data in this example.
\begin{equation}
\begin{aligned}
    u^\epsilon(0,x) &= 1.0 + 0.005 \sin(\pi x),\\
    \psi^\epsilon(0,x) &= -\gamma\partial_{xx} u_0^\epsilon,\\ q^\epsilon(0,x) &=
    \mathcal{M}(u_0^\epsilon)\partial_x\big(\gamma \partial_{xx} u_0^\epsilon - \Pi(u_0^\epsilon)\big),\\
    w^\epsilon(0,x) &= -\partial_x\big(
    \mathcal{M}(u_0^\epsilon)\partial_x\big(\gamma \partial_{xx} u_0^\epsilon - \Pi(u_0^\epsilon)\big)\big),\\
    p^\epsilon(0,x) &= \partial_x u_0^\epsilon,
\end{aligned}
\end{equation}
with $\Pi(u)= u$ and $\mathcal{M}(u)=u^3$.

The computational domain $\Omega=[0,2]$ is divided into $4000$ mesh points for the proposed scheme \eqref{eq:semi-imp}-\eqref{eq:expl}. We plot the numerical solutions for the unknown vector $\mathbf{U} = (u, \psi, q, w, p)^\top$ in the Figure \ref{fig:fourth_test} for times $T=0.00, 0.01$ and $0.1$ corresponding to $\epsilon=10^{-6}$. We also compare our numerical solutions with the reference solution obtained using the finite difference scheme proposed in \cite{zhornitskaya1999positivity} or \cite{kim2024positivity}. In this test case, the initial sinusoidal profile settles to a constant equilibrium after a finite time. Our scheme captures the constant equilibrium really well. Results obtained using the proposed first-order scheme are comparable with those of the higher-order finite difference scheme. Energy stability can be observed in Figure \ref{fig:fourth_test_energy}.
\begin{figure}[htbp]
  \centering
    \includegraphics[height=0.185\textheight]{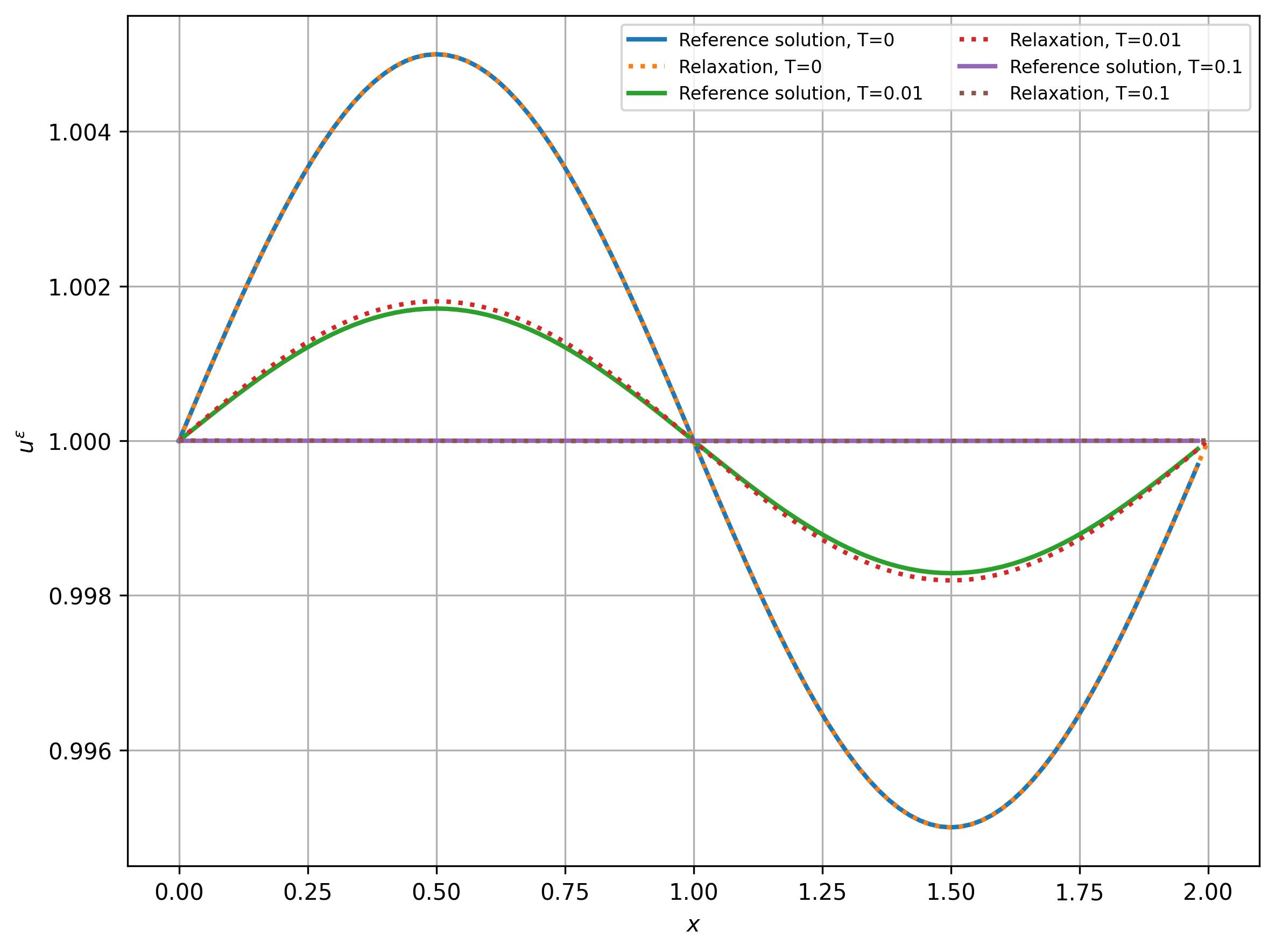}
    \includegraphics[height=0.185\textheight]{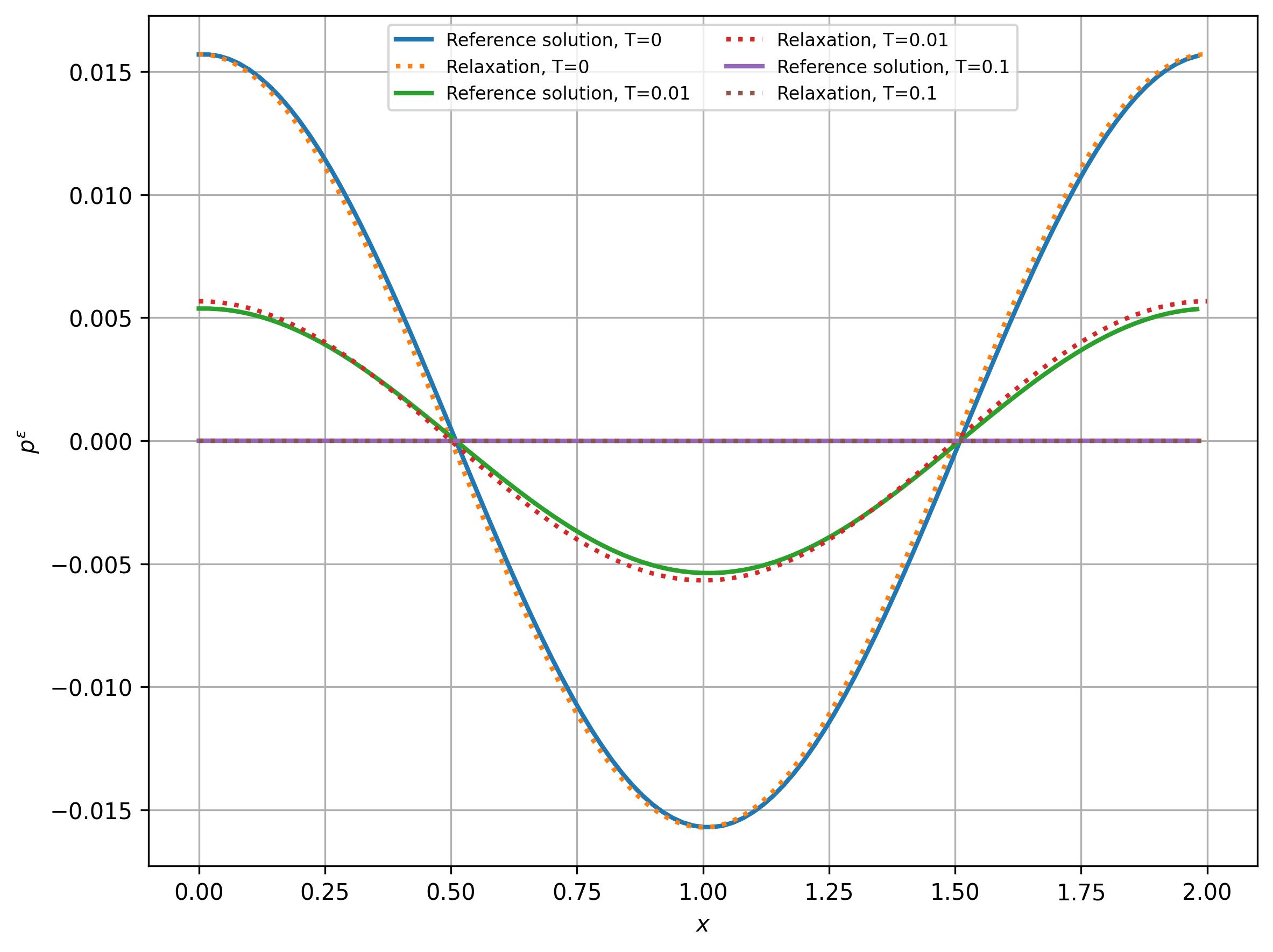}
    \includegraphics[height=0.185\textheight]{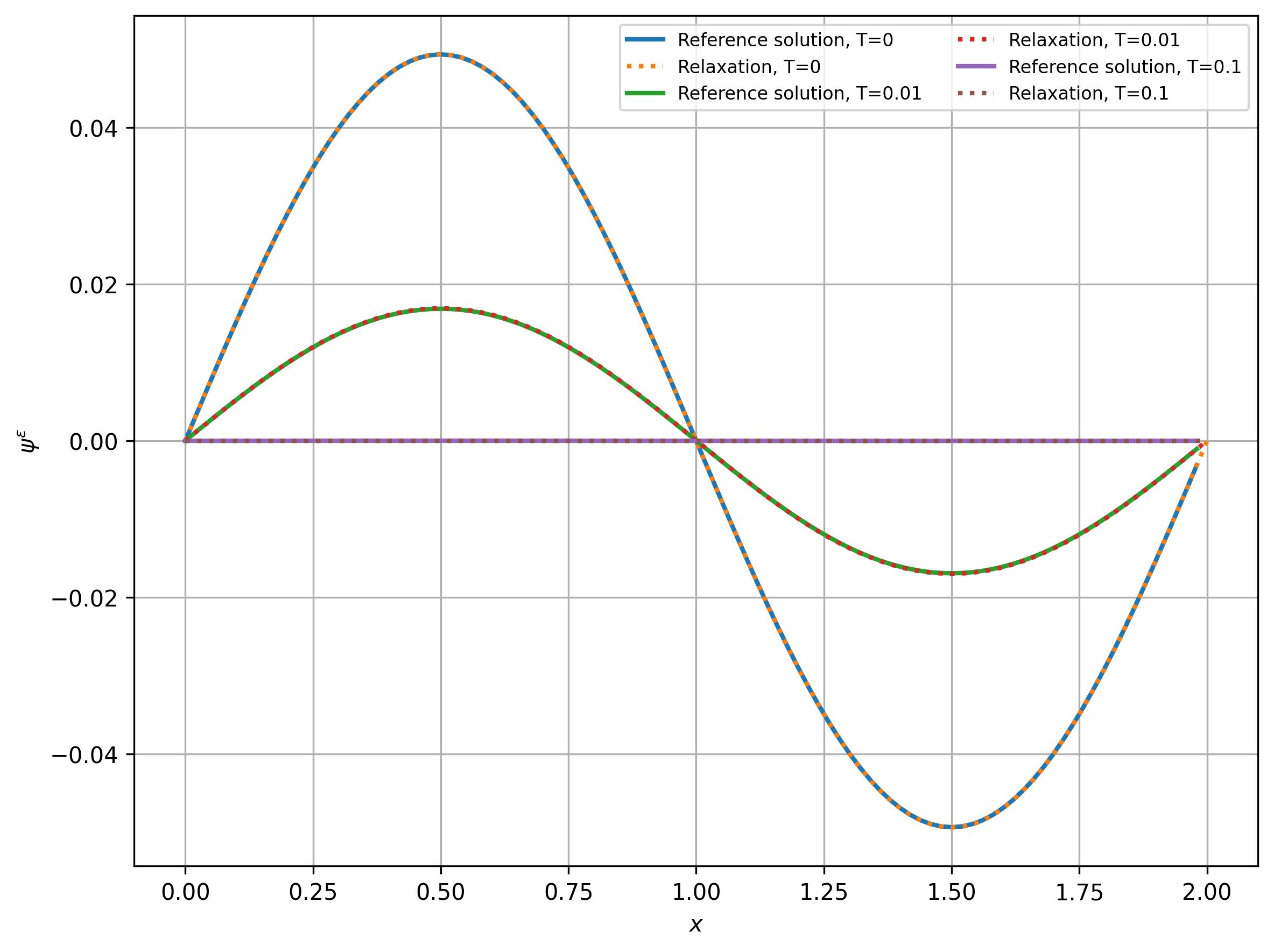}
    \includegraphics[height=0.185\textheight]{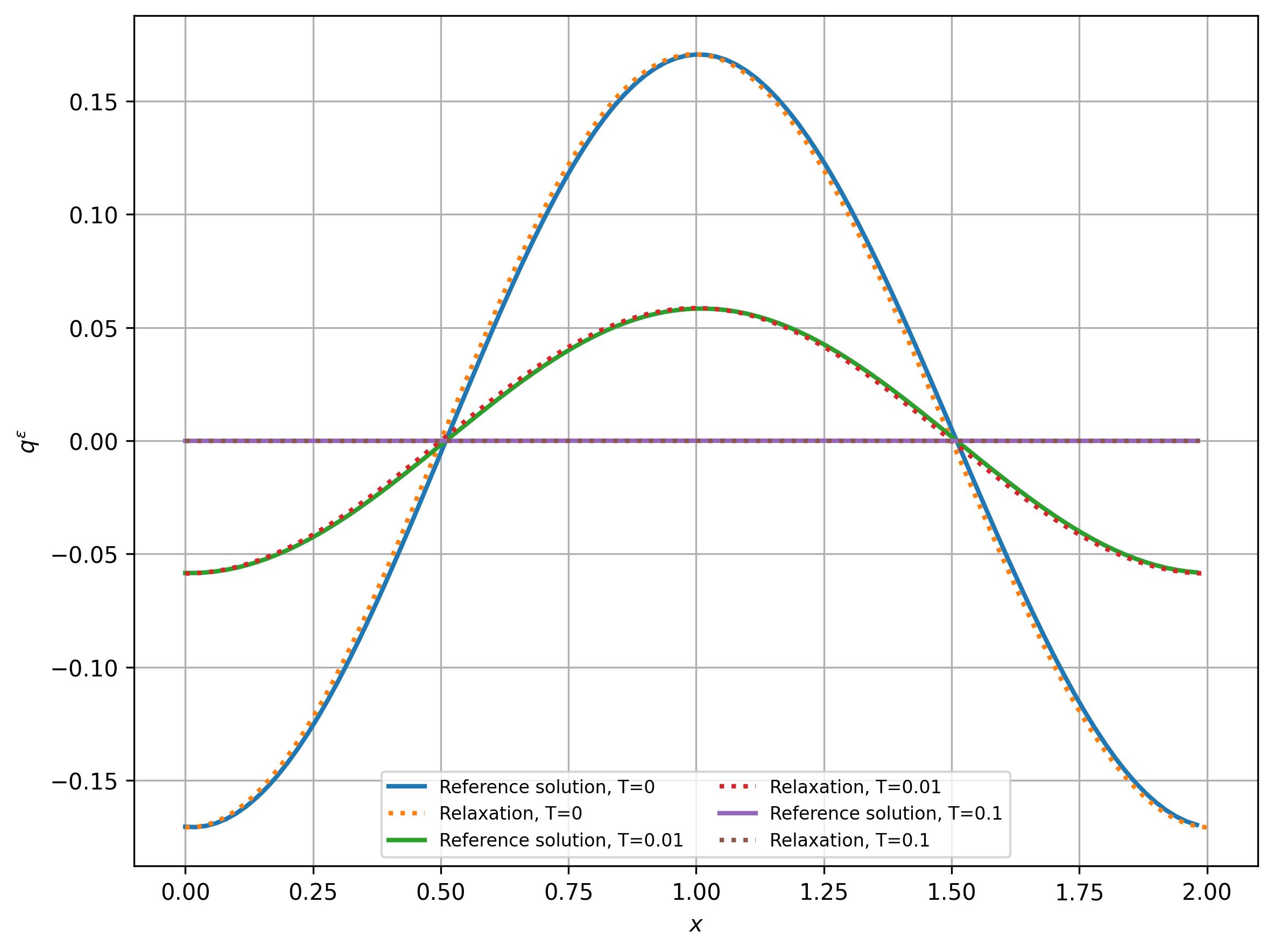}
    \includegraphics[height=0.185\textheight]{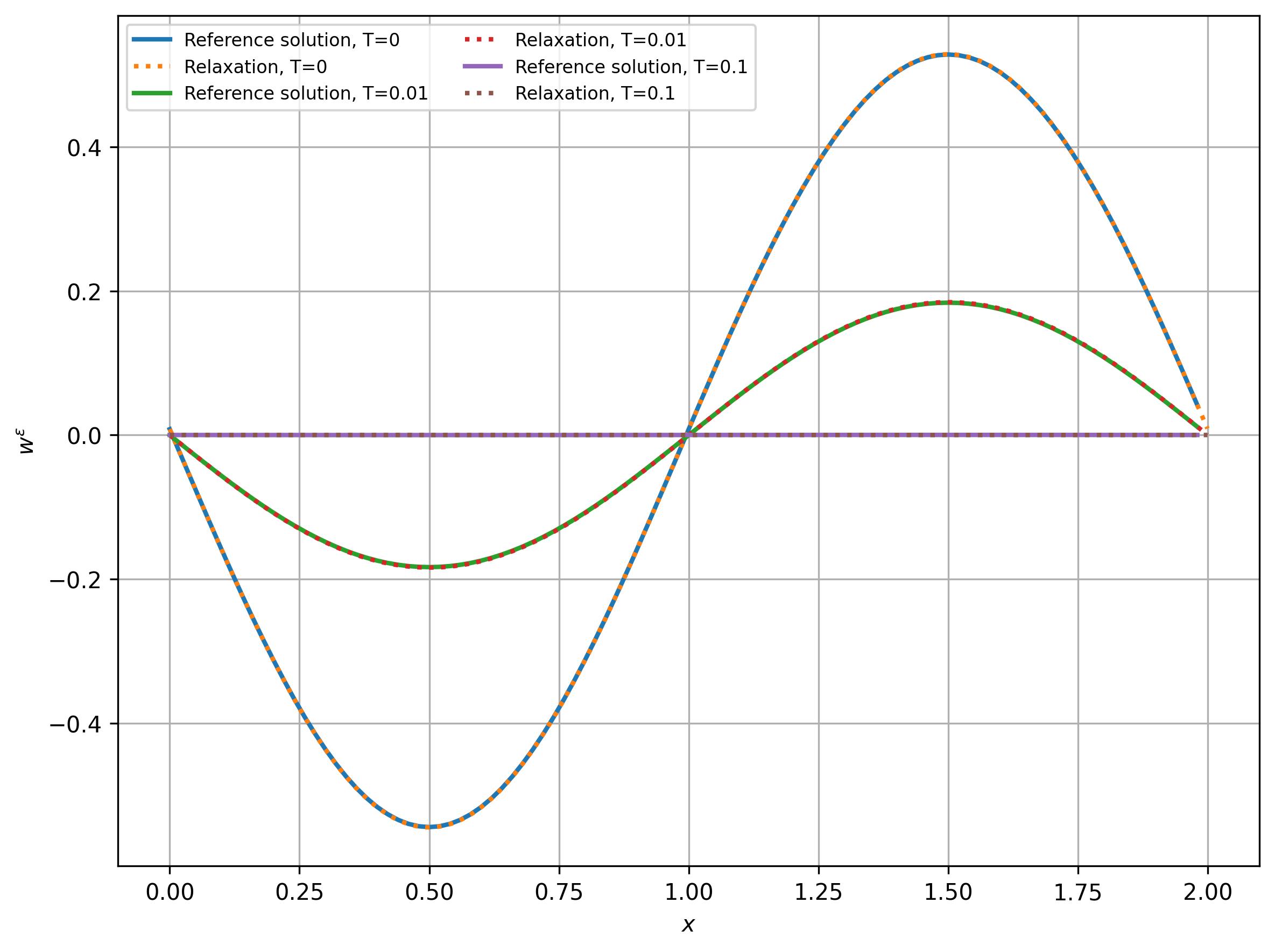}
    \caption{Comparison of numerical solutions obtained using proposed scheme with $\epsilon=10^{-6}$ and reference solutions at different times for the test case \ref{sec:1d_fourth}.}  
    \label{fig:fourth_test}
\end{figure}

\begin{figure}[htbp]
  \centering
    \includegraphics[height=0.25\textheight]{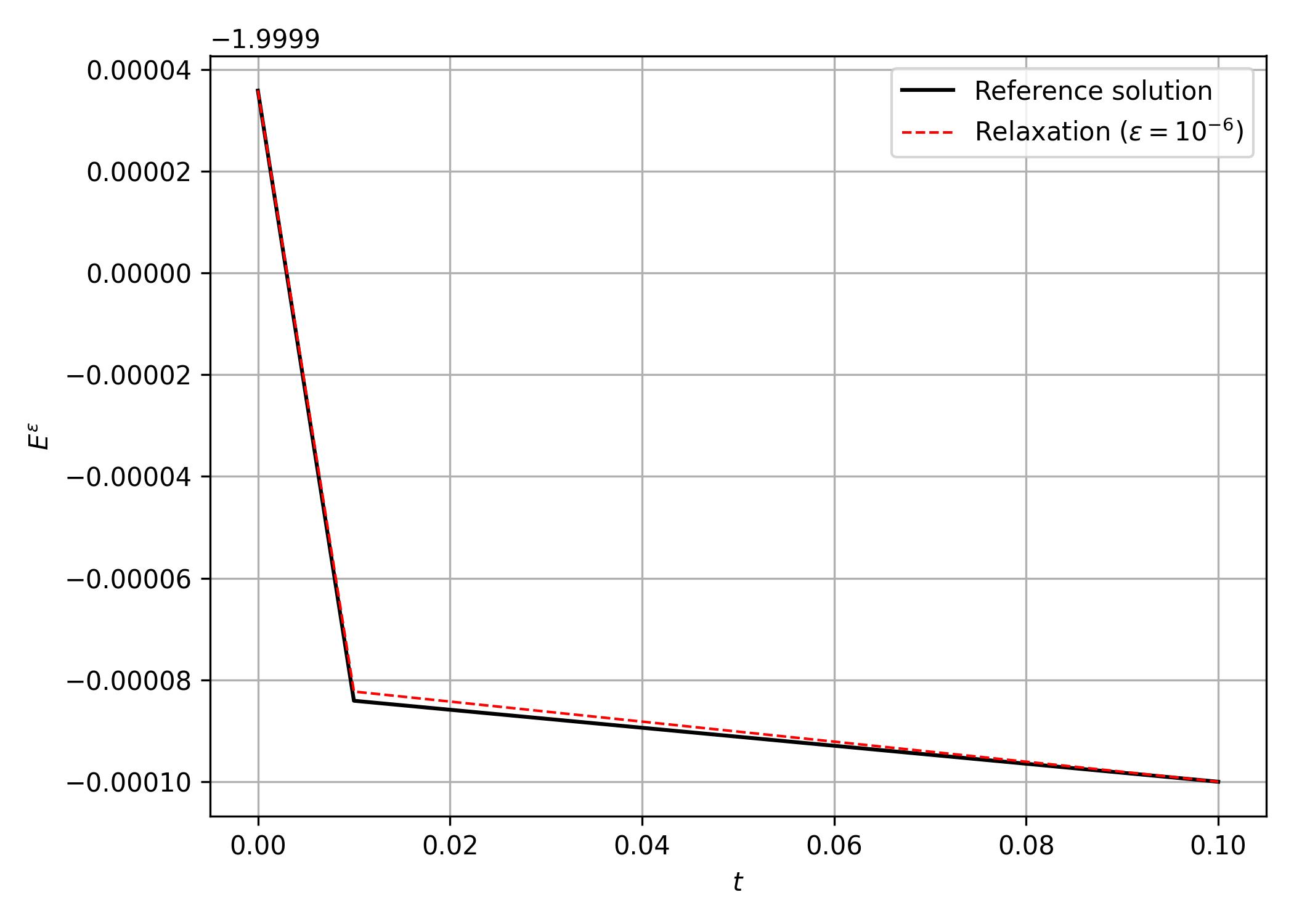}
    \caption{Comparison of energy plots corresponding to $\epsilon=10^{-6}$ for the test case \ref{sec:1d_fourth}.}  
    \label{fig:fourth_test_energy}
\end{figure}
\section{Conclusions and future outlook}\label{sec: conclusions}
In this article, we have proposed a novel energy-consistent, positivity-preserving, and asymptotic-preserving implicit-explicit (IMEX) scheme for a hyperbolic relaxation system that approximates the solutions of general fourth-order PDEs. By exploiting the underlying mathematical structure of the system, we rigorously proved that the scheme satisfies a discrete energy inequality and guarantees positive numerical solutions under a specific CFL condition. Our numerical experiments demonstrate the robustness of the method, particularly in challenging scenarios where the thin-film height approaches zero—a regime that typically leads to simulation breakdown or unphysical numerical artifacts in standard methods.

While the present work establishes the fundamental structure-preserving properties of the scheme, the issues of rigorous convergence, stability analysis, and error estimation remain open. We plan to address these aspects in an upcoming paper by leveraging the discrete energy stability established herein.

Currently, the proposed scheme is formally first-order accurate, which inherently introduces significant numerical diffusion, particularly on coarse grids. While this restricts its immediate applicability for high-fidelity simulations of complex thin-film flows, it serves as a crucial foundational step. Ultimately, this work establishes the theoretical framework necessary for developing higher-order, structure-preserving IMEX methods for these relaxation systems. The higher-order IMEX methods can be promising for reducing the computational costs significantly while capturing complex, physically meaningful dynamics in a reliable manner.

\bibliographystyle{mystyle}
\bibliography{references}

\end{document}